\newtheorem{theorem}{Theorem}[section]
\newtheorem{lemma}[theorem]{Lemma}
\newtheorem{remark}[theorem]{Remark}
\newtheorem{proposition}[theorem]{Propositon}
\newtheorem{definition}[theorem]{Definition}
\numberwithin{equation}{section}
\title{Characterizations of  complete stabilizability\thanks{This work was partially supported by the National Natural Science Foundation of China  under grants 11971022, 11871166 and Fundamental Research Funds for the Central Universities, China University of Geosciences(Wuhan) (CUGSX01).}}
\author{Hanbing Liu\thanks{School of Mathematics and Physics, China University of Geosciences (Wuhan), Wuhan, 430074, China (hanbing272003@aliyun.com)} \and Gengsheng Wang\thanks{Center for Application Mathematics, Tianjin University, Tianjin, 300072, China (wanggs@yeah.net)} \and Yashan Xu\thanks{School of Mathematical Sciences, Fudan University, KLMNS, Shanghai, 200433, China (yashanxu@fudan.edu.cn)} \and Huaiqiang Yu\thanks{School of Mathematics, Tianjin University, Tianjin 300354, China (huaiqiangyu@tju.edu.cn)}}
\date{}
\begin{document}
\selectlanguage{english}
\maketitle
\begin{abstract}
We  present several characterizations, via some weak observability inequalities, of the complete stabilizability for a control system $[A,B]$, i.e., $y'(t)=Ay(t)+Bu(t)$, $t\geq 0$, where $A$ generates a $C_0$-semigroup on a Hilbert space $X$ and $B$ is
a linear and bounded operator from another Hilbert space $U$ to $X$.
 We then extend these characterizations  in two directions: first, the control operator $B$ is unbounded; second, the control system is time-periodic.
 We also  give some
sufficient conditions, from the perspective of the spectral projections, to ensure the  weak observability inequalities.
As applications, we  provide several examples, which are not null controllable, but
can be verified, via the   weak observability inequalities, to be completely stabilizable.
\end{abstract}

{\bf Keywords.} complete stabilizability, weak observability inequality, infinite-dimensional  system
\vskip 5pt
{\bf AMS subject classifications.} 93D15, 93D23, 93B05, 93C25

\section{Introduction}\label{yu-section-1}
\subsection{Control system and notation}
In the  literature on infinite-dimensional linear systems, several concepts of stabilization appear, such as  complete stabilization, exponential stabilization, strong stabilization, polynomial stabilization, and logarithmic stabilization.
This paper mainly studies the complete stabilization for the  control system
 $[A,B]$, i.e.,
   \begin{equation}\label{yu-3-2-1}
    y'(t)=Ay(t)+Bu(t),\;\;\;\;t\geq 0,
\end{equation}
 under the  assumptions:
   \begin{enumerate}
  \item [($H_1$)]  The operator $A$, with its domain $D(A)\subset X$, generates a $C_0$-semigroup
 $S(t)$ ($t\geq 0$)
  on a  Hilbert space $X$.
   \end{enumerate}
   \begin{enumerate}
  \item [($H_2$)] The operator $B$ is a linear and bounded operator from another
   Hilbert space $U$ to  $X$. The Hilbert spaces $X$ and $U$ are identified with their dual spaces respectively.
    \end{enumerate}
    We further  study the complete stabilization for both a system $[A,B]$ (where $B$ is unbounded) and a periodic system $[A(\cdot),B(\cdot)]$. To avoid complex definitions in the introduction, we treat them as extensions in Section 3 of this paper.

    Throughout the paper, the following notations will be used: Given $u\in L^2(\mathbb{R}^+;U)$ and $y_0\in X$,  we write  $y(\cdot; u, y_0)$
for the solution to the system (\ref{yu-3-2-1}) with the  initial condition $y(0)=y_0$; $\mathbb{R}^+:=[0,+\infty)$, $\mathbb{N}:=\{1,2,\ldots\}$ and $\mathbf{N}:=\mathbb{N}\cup \{0\}$;
   Given a Hilbert space $X_1$, we write $\|\cdot\|_{X_1}$ and $\langle\cdot,\cdot\rangle_{X_1}$
   for the norm and the inner product of $X_1$ respectively;
     Given Banach spaces $X_1$ and $X_2$, we write $\mathcal{L}(X_1;X_2)$ for the space of all linear and bounded operators from $X_1$ to $X_2$ and  $\mathcal{L}(X_1):=\mathcal{L}(X_1;X_1)$; Given a linear operator $F$, we use $F^*$ to denote its adjoint operator; We denote by
     $I$ the identity operator on any space;
     Write $\rho(A)$ for the resolvent set of the operator $A$.

\subsection{Aim and motivation}

 Let us first review several
concepts related to the control system  \eqref{yu-3-2-1}:
\begin{enumerate}
\item[$(a_1)$] The system (\ref{yu-3-2-1}) is said to be exponentially stabilizable, if there exists $K\in \mathcal{L}(X;U)$,  $\mu>0$ and $C>0$  such that
    $\|S_K(t)\|_{\mathcal{L}(X)}\leq Ce^{-\mu t}$ for all $t\in\mathbb{R}^+$.  Here, $S_K(t)$ ($t\geq 0$) denotes the semigroup generated by $A+BK$.

\item[$(a_2)$] The system  \eqref{yu-3-2-1} is said to be completely (or rapidly)
stabilizable, if for any $\mu>0$, there exists $K:=K(\mu)\in \mathcal{L}(X;U)$
and $C:=C(\mu)>0$ such that $ \|S_K(t)\|_{\mathcal{L}(X)}\leq Ce^{-\mu t}$
for all $t\in\mathbb{R}^+$.

\item[$(a_3)$] The system  \eqref{yu-3-2-1} is said to be null controllable over $[0,T]$ for some $T>0$, if for any $y_0\in X$, there exists $u\in L^2(0,T;U)$ such that $y(T; u, y_0)=0$.
\end{enumerate}
For these concepts, we have the following known facts:
\begin{enumerate}
\item[$(b_1)$]   In  finite-dimensional settings where $A$, $B$ are matrices,  $(a_2)\Leftrightarrow(a_3)$.
    However, in infinite-dimensional settings,
    $(a_3)\Rightarrow(a_2)\Rightarrow (a_1)$ (see  \cite[Proposition 21]{Trelat-Wang-Xu}), but the reverse may be not true.

\item[$(b_2)$]  The null controllability over $[0,T]$ is equivalent to the following observability inequality: there exists $C:=C(T)>0$ such that
$\|S(T)^*\varphi\|_{X}\leq C\|B^*S(T-\cdot)^*\varphi\|_{L^2(0,T;U)}$ for all $\varphi\in X$ (This inequality can be equivalently written as the ``initial time'' observability inequality for the adjoint equation of (\ref{yu-3-2-1}), that is,  $\|z(0)\|_{X}\leq C\|B^*z(\cdot)\|_{L^2(0,T;U)}$, where $z(\cdot)$ is the solution to the adjoint equation $z'(t)=-A^*z(t), z(T)=\varphi\in X$ (see \cite[Chapter 7, Section 2.2]{Li})).

\item[$(b_3)$] The exponential stabilizability is equivalent to the weak observability of the dual system: there is $\alpha\in(0,1)$, $T>0$ and $C>0$ such that $\|S(T)^*\varphi\|_{X}\leq C\|B^*S(T-\cdot)^*\varphi\|_{L^2(0,T;U)} +\alpha\|\varphi\|_X$ for all $\varphi\in X$.
   (This was proved in  \cite[Theorem 1]{Trelat-Wang-Xu}.)
\end{enumerate}
According to the above facts $(b_1)$-$(b_3)$,  the following question is  natural and interesting:
\begin{itemize}
\item How to
characterize the complete stabilizability  by some kind of
observability inequalities?
\end{itemize}
{\it  The aim of this paper is to answer the above question.}

\subsection{Main results}
   The main  theorem of this paper is as follows:
\begin{theorem}\label{yu-theorem-3-4-1}
    The following statements are equivalent:
\begin{enumerate}
  \item [(i)] The control system (\ref{yu-3-2-1}) is completely stabilizable.
  \item [(ii)] For any $\alpha>0$, there are positive constants $C(\alpha)$ and $D(\alpha)$ such that
\begin{equation}\label{10.22Liu01}
    \|S(T)^*\varphi\|_{X}\leq D(\alpha)\|B^*S(T-\cdot)^*\varphi\|_{L^2(0,T;U)}+C(\alpha) e^{-\alpha T}\|\varphi\|_{X},\;\;\mbox{when}\;\;\varphi\in X\;\;\mbox{and}\;\;T>0.
\end{equation}
  \item [(iii)] There exists $T_0\geq 0$ such that for any $T>T_0$ and $\alpha>0$,
  there are positive constants $C(\alpha)$ (which is independent of $T$) and $D(\alpha,T)$ such that
\begin{equation}\label{yu-6-22-1}
    \|S(T)^*\varphi\|_{X}\leq D(\alpha,T)\|B^*S(T-\cdot)^*\varphi\|_{L^2(0,T;U)}+C(\alpha) e^{-\alpha T}\|\varphi\|_{X},\;\;\mbox{when}\;\;\varphi\in X.
\end{equation}
 \item [(iv)] For each $k\in \mathbb{N}$, there are positive constants $T_k$ and $D(k)$ such that
 \begin{equation}\label{10-23Liu02}
    \|S(T_k)^*\varphi\|_{X}\leq D(k)\|B^*S(T_k-\cdot)^*\varphi\|_{L^2(0,T_k;U)}+ e^{-k T_k}\|\varphi\|_{X},\;\;\mbox{when}\;\;\varphi\in X.
\end{equation}
\end{enumerate}
\end{theorem}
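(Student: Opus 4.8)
The plan is to establish the cyclic chain $(i)\Rightarrow(ii)\Rightarrow(iii)\Rightarrow(iv)\Rightarrow(i)$, so that the two structural implications sit between the two that carry the analytic content. Two of the four steps are essentially bookkeeping. The implication $(ii)\Rightarrow(iii)$ is immediate upon taking $T_0=0$ and $D(\alpha,T)\equiv D(\alpha)$. For $(iii)\Rightarrow(iv)$ I would fix, for each $k\in\mathbb{N}^+$, the exponent $\alpha=k+1$ in (\ref{yu-6-22-1}) and then choose $T_k>\max\{T_0,\ln C(k+1)\}$, so that $C(k+1)e^{-(k+1)T_k}\le e^{-kT_k}$; setting $D(k):=D(k+1,T_k)$ then yields (\ref{10-23Liu02}).

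For $(i)\Rightarrow(ii)$ I would start from the feedback $K=K(\alpha)\in\mathcal L(X;U)$ furnished by complete stabilizability with the rate $\mu=\alpha$, so that $\|S_K(t)\|_{\mathcal L(X)}\le C(\alpha)e^{-\alpha t}$ for all $t\ge0$. The closed-loop and open-loop adjoint semigroups are linked by the Duhamel identity
\[
 S(t)^*\varphi=S_K(t)^*\varphi-\int_0^t S_K(s)^*K^*B^*S(t-s)^*\varphi\,ds,
\]
which is legitimate because $B$ and $K$ are bounded. Taking $t=T$, estimating $\|S_K(T)^*\varphi\|_X\le C(\alpha)e^{-\alpha T}\|\varphi\|_X$ on the first term, and applying Cauchy--Schwarz together with $\int_0^T e^{-2\alpha s}\,ds\le 1/(2\alpha)$ on the integral, one arrives at (\ref{10.22Liu01}) with $C(\alpha)$ as given and $D(\alpha):=\|K(\alpha)\|_{\mathcal L(X;U)}C(\alpha)/\sqrt{2\alpha}$; crucially both constants are independent of $T$.

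The heart of the matter is $(iv)\Rightarrow(i)$, and here I would exploit a shift trick together with the known equivalence $(b_3)$ (i.e.\ \cite[Theorem 1]{Trelat-Wang-Xu}). Fix a target rate $\mu>0$ and an integer $k>\mu$. Writing $S_\mu(t):=e^{\mu t}S(t)$ for the $C_0$-semigroup generated by the bounded perturbation $A+\mu I$, I would multiply (\ref{10-23Liu02}) by $e^{\mu T_k}$ and use two elementary facts: first $e^{\mu T_k}\|S(T_k)^*\varphi\|_X=\|S_\mu(T_k)^*\varphi\|_X$; and second, since $e^{2\mu(T_k-s)}\ge1$ on $[0,T_k]$, one has $\|B^*S(T_k-\cdot)^*\varphi\|_{L^2(0,T_k;U)}\le\|B^*S_\mu(T_k-\cdot)^*\varphi\|_{L^2(0,T_k;U)}$. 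These turn (\ref{10-23Liu02}) into the weak observability inequality for the shifted system $[A+\mu I,B]$,
\[
 \|S_\mu(T_k)^*\varphi\|_X\le D(k)e^{\mu T_k}\|B^*S_\mu(T_k-\cdot)^*\varphi\|_{L^2(0,T_k;U)}+e^{(\mu-k)T_k}\|\varphi\|_X,
\]
whose residual factor $\alpha:=e^{(\mu-k)T_k}$ lies in $(0,1)$ precisely because $k>\mu$. By $(b_3)$ the system $[A+\mu I,B]$ is then exponentially stabilizable, say by some $K$ with $\|e^{\mu t}S_K(t)\|_{\mathcal L(X)}\le Me^{-\omega t}$ for some $\omega>0$; hence $\|S_K(t)\|_{\mathcal L(X)}\le Me^{-(\mu+\omega)t}\le Me^{-\mu t}$. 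Since $\mu>0$ is arbitrary, this is exactly the complete stabilizability $(i)$.

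I expect the delicate point to be verifying the two inequalities in the last step in the correct direction: the whole argument hinges on the residual exponential $e^{-kT_k}$ in (\ref{10-23Liu02}) being calibrated so that, after the shift, it becomes $e^{(\mu-k)T_k}<1$, while the $L^2$-term only grows under multiplication by $e^{\mu(T_k-s)}$. Once these are checked, no Riccati or LQ construction is needed, since $(b_3)$ already packages the passage from weak observability to an exponentially stabilizing feedback; the only care required is that $A+\mu I$ generates a $C_0$-semigroup (a bounded perturbation of $A$) and that $B$ is unchanged, so that $(b_3)$ applies verbatim to $[A+\mu I,B]$.
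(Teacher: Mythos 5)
Your proposal is correct and follows essentially the same route as the paper: the same cyclic chain $(i)\Rightarrow(ii)\Rightarrow(iii)\Rightarrow(iv)\Rightarrow(i)$, the same choice of constants in $(iii)\Rightarrow(iv)$, and the same reduction of $(iv)\Rightarrow(i)$ to exponential stabilizability of the shifted system $[A+\mu I,B]$ via \cite[Theorem 1]{Trelat-Wang-Xu}. The only cosmetic difference is that in $(i)\Rightarrow(ii)$ you estimate the adjoint Duhamel identity directly, whereas the paper pairs the state-space identity against $y_0$ by duality; the resulting constants are identical.
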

   Some comments on Theorem \ref{yu-theorem-3-4-1} are given.
\begin{enumerate}
\item[$(c_1)$] For each $\alpha>0$, inequality (\ref{10.22Liu01}) is a weak observability inequality. Hence, the complete stabilizability
is characterized by  a family of weak observability inequalities. This essentially differs from the exponential
    stabilizability which corresponds to only one weak observability inequality (see the note $(b_3)$).
    The reason that an  exponential function appears  in \eqref{10.22Liu01} (as well as \eqref{yu-6-22-1}
    and \eqref{10-23Liu02})
     is that the system $[A,B]$ is completely stabilizable if and only if for each $\mu>0$, the system
     $[A+\mu I,B]$  is exponentially stabilizable. This  can be seen from the proof of Theorem \ref{yu-theorem-3-4-1}.

\item[$(c_2)$] The statement $(iv)$ can be understood as  a kind of discretization of the statement $(ii)$.

\item[$(c_3)$] In this paper, we will further extend Theorem \ref{yu-theorem-3-4-1} in two directions: First, the control operator $B$ is  unbounded; Second, the control system  is time-periodic. We give these extensions in Section \ref{yu-sect-3}.
 \item[$(c_4)$]   As applications of Theorem \ref{yu-theorem-3-4-1}, as well as its extensions, some examples
 will be given in Subsection \ref{yu-exmp-4}. These examples present several concrete
  control systems, which are not null controllable, but
 can be proved
  to be completely stabilizable, via the weak observability inequalities in Theorem \ref{yu-theorem-3-4-1}, as well as its extensions.

 \item[$(c_5)$]   We  provide some sufficient conditions, from the perspective of the spectral projections,
    to ensure the weak observability inequalities in Theorem \ref{yu-theorem-3-4-1} (see Subsection \ref{yu-suff-1}).

\end{enumerate}

\subsection{Related works and the novelty of this paper}

 There is a lot of literature on the stabilization of infinite-dimensional systems. We mention
\cite{Curtain, Lagnese, Liu, Pritchard, Triggiani} for time-invariant linear systems with bounded control operators; \cite{Ammari, Badra2, Cur-Weiss, Flandoli-Lasiecka-Triggiani, Guo,  Komornik, Komornik-1997, Krstic, Lasiecka-Triggiani, Lions, Russell, Urquiza, Vest, Weiss-Rebarber} for time-invariant linear systems with unbounded control operators; \cite{Azmi,  Kerschbaum,  Kunisch} for time-varying linear systems with bounded or unbounded control operators;   \cite{Badra, Lunardi, WX2} for time-periodic systems;
\cite{Barbu2, Barbu, Bastin, Coron, Coron2} for nonlinear systems.

About the characterizations of the exponential stabilization for infinite-dimensional linear time-invariant systems, we
would like to mention works \cite{ Badra2, Liu, Trelat-Wang-Xu}:
  A   frequency domain criterion
  on the stabilizability is built up for conservative systems with distributed control  in  \cite{Liu};  A unique continuation type criterion
 on the stabilizability (which is also called Fattorini's criterion)  is established in \cite{Badra2} for  parabolic systems;
 A characterization, via a weak observability inequality, of  the stabilizability is given
 for some infinite dimensional systems in \cite{Trelat-Wang-Xu}.
  About the characterizations of the periodically exponential stabilization for infinite-dimensional  linear time-periodic systems, we mention works
\cite{Badra, Badra3, WX1, WX2, Xu}: Some unique continuation type criterions on
the periodic stabilizability, as well as a characterization, via a weak observability inequality, are presented for some parabolic-like time periodic evolution equations
 in \cite{Badra, Badra3}; A characterization, via a detectability  inequality, is given
 for some linear time-periodic evolution systems in \cite{Xu}.
 Certain geometric and analytic characterizations of the periodic stabilizability
 are provided
 for some linear time-periodic evolution systems
 in \cite{WX1, WX2}.

    We emphasize here the  works  \cite{Badra}, \cite{Badra2} and \cite{Badra3},
  where some characterizations of the  stabilizability/
   the periodic stabilizability, with an arbitrarily given decay rate, were obtained for some parabolic-like evolution equations.  It seems for us that some
   characterizations of the complete stabilizability/the periodically complete stabilizability for those equations can be derived from these works.
   Compared these works with ours, we would like to emphasize what follows: First,
   the results obtained there need the assumption that the generator of the control system has compact resolvent,  while  such assumption is not necessary in our work. (This assumption allows one to  decompose the control system into two sub-systems, one is unstable and of finite-dimension and another is stable and of
   infinite-dimension.)
   Second, the works \cite{Badra}, \cite{Badra2} and \cite{Badra3} concern stabilizability, while ours deals with
   complete stabilizability.

  We now explain the novelty of this work:
  \begin{itemize}
    \item We have not found any  characterization via observability inequalities on the complete/periodic complete stabilizability for time-invariant/time-periodic evolution equations in the literature.  Hence, Theorem \ref{yu-3-2-1}, as well as its extensions obtained in this work, seem to be new results. These results may help us to understand the connections and
        the differences between  stabilizability,  complete stabilizability and  null controllability, from the perspective of observability inequalities.
    \item We are working in a  general framework where the generator
    of the system does not need to have compact resolvents. Our Example 1 in  Subsection 4.2 is under such framework. Indeed, the generator of the system in that example
      has only continuous spectrum, and consequently  does not have compact resolvents.

    \item Though the generators of the systems studied in  \cite{Trelat-Wang-Xu, Xu}  also do not need to have compact resolvents, the authors there did not
        obtain the characterizations on the stabilizability/the periodic stabilizability
         for an arbitrarily given decay rate, like \cite{Badra, Badra2,Badra3}. So the approach to the characterizations on the complete stabilizability obtained in this work does not follow from \cite{Trelat-Wang-Xu, Xu}.

  \end{itemize}

\subsection{The plan of this paper}
The rest of the paper is organized as follows: Section \ref{yu-section-12-2} gives the proof of  Theorem \ref{yu-theorem-3-4-1};
Section \ref{yu-sect-3} presents two extensions of Theorem \ref{yu-theorem-3-4-1}; Section \ref{yu-section-app-4} provides several examples and
 shows
some sufficient conditions ensuring the weak observability inequalities.

\section{The proof of Theorem \ref{yu-theorem-3-4-1}}\label{yu-section-12-2}
The next  lemma is quoted from \cite{Trelat-Wang-Xu} and will play an important role in the proof of
 Theorem \ref{yu-theorem-3-4-1}.

\begin{lemma}\label{10.23Liulem1}
    (\cite[Theorem 1]{Trelat-Wang-Xu}) Let $\mu\geq 0$. Let $S_\mu(t)$ ($t\geq 0$) be the semigroup generated by $A+\mu I$. Then the following  statements are equivalent:
\begin{enumerate}
  \item [(i)] The following  system is exponentially stabilizable:
\begin{equation}\label{yu-3-2-2}
    z'(t)=(A+\mu I)z(t)+Bu(t),\;\;\;\;t\in\mathbb{R}^+.
\end{equation}
    \item [(ii)] There exists $\alpha\in(0,1), T>0$ and $C\geq 0$ such that
\begin{equation}\label{10.23Liu06}
    \|S_\mu(T)^*\varphi\|_{X}\leq C\|B^*S_\mu(T-\cdot)^*\varphi\|_{L^2(0,T;U)}+\alpha \|\varphi\|_{X}\;\;\mbox{for any}\;\;\varphi\in X.
\end{equation}
\end{enumerate}
\end{lemma}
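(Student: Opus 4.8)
The plan is to reduce to the case $\mu=0$ and then prove the underlying equivalence ``exponential stabilizability $\Leftrightarrow$ weak observability inequality'' by a duality-plus-synthesis argument. First I would observe that $A+\mu I$ generates the semigroup $S_\mu(t)=e^{\mu t}S(t)$, so that \eqref{yu-3-2-2} is nothing but the control system $[A+\mu I,B]$ and \eqref{10.23Liu06} is exactly the weak observability inequality for this shifted system. Hence it suffices to establish the equivalence for a generic control system $[A,B]$ (the case $\mu=0$) and then apply it with $A$ replaced by $A+\mu I$. Throughout I write $L_T\in\mathcal L(L^2(0,T;U);X)$ for the input map $L_Tu=\int_0^TS(T-s)Bu(s)\,ds$, whose adjoint is $(L_T^*\varphi)(s)=B^*S(T-s)^*\varphi$; thus the integrand norm in \eqref{10.23Liu06} is precisely $\|L_T^*\varphi\|_{L^2(0,T;U)}$.

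For the direction (i)$\Rightarrow$(ii), I would start from a feedback $K\in\mathcal L(X;U)$ with $\|S_K(t)\|_{\mathcal L(X)}\le Ce^{-\mu t}$ and exploit the perturbation identity $S_K(t)=S(t)+\int_0^tS(t-s)BKS_K(s)\,ds$. Taking adjoints and solving for $S(T)^*\varphi$ gives
$$S(T)^*\varphi=S_K(T)^*\varphi-\int_0^TS_K(s)^*K^*B^*S(T-s)^*\varphi\,ds.$$
Estimating the first term by $\|S_K(T)\|\le Ce^{-\mu T}$ and the integral, via Cauchy--Schwarz, by $\|K\|\big(\int_0^T\|S_K(s)\|^2\,ds\big)^{1/2}\|L_T^*\varphi\|_{L^2(0,T;U)}$, and noting that $\int_0^T\|S_K(s)\|^2\,ds\le C^2/(2\mu)$ is bounded uniformly in $T$, I obtain \eqref{10.23Liu06} with $\alpha=Ce^{-\mu T}$ and a $T$-independent constant; choosing $T$ large enough that $Ce^{-\mu T}<1$ finishes this direction.

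The substantive direction is (ii)$\Rightarrow$(i), which I would carry out in two stages. The first is a duality argument of Fabre--Puel--Zuazua type: minimizing over $\varphi\in X$ the functional $J(\varphi)=\frac12\|L_T^*\varphi\|^2+\alpha\|y_0\|\,\|\varphi\|+\langle S(T)y_0,\varphi\rangle$, one checks that \eqref{10.23Liu06} makes the $\alpha\|\varphi\|$ penalty cancel the defect term $-\alpha\|y_0\|\|\varphi\|$ coming from $\langle S(T)y_0,\varphi\rangle$, leaving $J(\varphi)\ge\frac12\|L_T^*\varphi\|^2-C\|y_0\|\,\|L_T^*\varphi\|$ and thereby guaranteeing a minimizer $\hat\varphi$. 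Its optimality condition produces the control $u=L_T^*\hat\varphi$ with $\|y(T;u,y_0)\|_X\le\alpha\|y_0\|_X$, while $J(\hat\varphi)\le J(0)=0$ yields $\|u\|_{L^2(0,T;U)}\le 2C\|y_0\|_X$; since $\alpha<1$ this is a genuine contraction with cost linear in the datum. Iterating this control on the successive intervals $[nT,(n+1)T]$ gives $\|y(nT)\|\le\alpha^n\|y_0\|$, and by the geometric decay together with boundedness of $S$ on $[0,T]$, finiteness of the infinite-horizon cost $\int_0^\infty\big(\|y(t)\|^2+\|u(t)\|^2\big)\,dt\le C_2\|y_0\|^2$ for every $y_0$. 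The second stage synthesizes an honest feedback: finiteness of the linear--quadratic value function (with the full state penalized) yields, through the standard algebraic Riccati theory for bounded $B$, a nonnegative self-adjoint $P\in\mathcal L(X)$ solving the ARE, and $K:=-B^*P$ renders $S_K$ exponentially stable, i.e.\ (i).

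I expect the main obstacle to be this second stage --- passing from an open-loop, initial-data-dependent contracting control to a single bounded \emph{linear} feedback $K\in\mathcal L(X;U)$ producing true exponential decay. The duality step is routine once the coercivity is read off from \eqref{10.23Liu06}, but the feedback synthesis requires the solvability of the algebraic Riccati equation and the exponential stability of the optimal closed loop; these rest on the detectability afforded by penalizing the whole state in the cost, and it is precisely here that the boundedness of $B$ in $(H_2)$ is used to keep $P$ and $K$ bounded. The general-$\mu$ statement of the lemma then follows at once by applying the established $\mu=0$ equivalence to $[A+\mu I,B]$.
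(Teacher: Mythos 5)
The paper offers no proof of this lemma: it is quoted verbatim as \cite[Theorem 1]{Trelat-Wang-Xu}, so there is nothing internal to compare against. Your reconstruction is essentially the proof of the cited result, and it also mirrors the machinery this paper itself deploys for the unbounded-$B$ extension (the duality step is the paper's Lemma \ref{yu-lemma-10-5-2}, the interval-by-interval iteration is Sub-step 4.1 of the proof of Theorem \ref{yu-theorem-10-5-1}, and the Riccati synthesis with automatic detectability from penalizing the full state is Lemma \ref{yu-lemma-10-5-1} together with Remark \ref{yu-remark-12-1-3}). Both directions are sound: the adjoint perturbation identity gives $(i)\Rightarrow(ii)$ exactly as the paper's primal computation in Step 1 of Theorem \ref{yu-theorem-3-4-1} does, and the chain ``weak observability $\Rightarrow$ cost-uniform $\alpha$-null controllability $\Rightarrow$ finite infinite-horizon LQ cost $\Rightarrow$ exponentially stable optimal feedback'' is the standard and correct route. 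The one point you state too quickly is the existence of the minimizer $\hat\varphi$ of $J$: the functional is convex and bounded below via \eqref{10.23Liu06}, but it is not coercive in the norm of $X$ (only $\|L_T^*\varphi\|$ is controlled), so a minimizer in $X$ need not exist; one must either add a vanishing regularization $\varepsilon\|\varphi\|_X^2$ and pass to the limit, or extract the control directly as a weak $L^2(0,T;U)$ limit of $L_T^*\varphi_n$ along a minimizing sequence. This is a routine repair and does not affect the conclusion, which is precisely the cost-uniform $\alpha$-null controllability asserted in Lemma \ref{yu-lemma-10-5-2}; the final passage to exponential stability of the closed loop is then Datko's theorem applied to the optimal semigroup, as you indicate.
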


 Now, we are in position to prove Theorem \ref{yu-theorem-3-4-1}.
\begin{proof}[The proof of Theorem \ref{yu-theorem-3-4-1}]
We organize the proof in several steps.
\vskip 5pt

\noindent {\it Step 1. We show $(i)\Rightarrow (ii)$.}
\vskip 5pt

Arbitrarily fix $\alpha>0$. By $(i)$,  there exists  $K:=K(\alpha)\in \mathcal{L}(X;U)$ and $C:=C(\alpha)\geq 1$ such that
\begin{equation}\label{yu-6-22-2}
    \|S_{K}(t)\|_{\mathcal{L}(X)}\leq C e^{-\alpha t}\;\;\mbox{for all}\;\;t\in\mathbb{R}^+.
\end{equation}
     Meanwhile, we arbitrarily fix $y_0\in X$  and set
     \begin{equation}\label{wang2.711.1}
     u_{y_0}(t):=KS_{K}(t)y_0,\;\; t\in \mathbb{R}^+.
     \end{equation}
     Then by \eqref{yu-6-22-2} and \eqref{wang2.711.1}, we have
     \begin{equation}\label{wang2.811.1}
     \|u_{y_0}\|_{L^2(0,T;U)}\leq C(2\alpha)^{-\frac{1}{2}}
     \|K\|_{\mathcal{L}(X;U)}
    \|y_0\|_X\;\;\mbox{for any}\;\;T>0.
     \end{equation}
    Next, we arbitrarily fix $T>0$. By the definitions of $S_K(\cdot)$ and $S(\cdot)$ and by \eqref{wang2.711.1}, we see
     \begin{equation*}
     S_K(T)y_0=S(T)y_0+\int_0^TS(T-t)B u_{y_0}(t)dt,
     \end{equation*}
     which implies
\begin{equation*}
-\langle y_0, S(T)^*\varphi\rangle_{X}=-
\langle S_K(T)y_0,\varphi\rangle_{X}
+\int_0^T\langle u_{y_0}(t),B^*S(T-t)^*\varphi\rangle_{X}dt\;\;\mbox{for any}\;\;\varphi\in X.
\end{equation*}
The above, along with \eqref{yu-6-22-2}  and \eqref{wang2.811.1},  yields that for any $\varphi\in X$,
\begin{equation*}
|\langle y_0, S(T)^*\varphi\rangle_{X}|\leq
C\left(e^{-\alpha T}\|\varphi\|_{X}+\|K\|_{\mathcal{L}(X;U)}(2\alpha)^{-\frac{1}{2}}
    \|B^*S(T-\cdot)^*\varphi\|_{L^2(0,T;U)}\right)\|y_0\|_X.
\end{equation*}
Since $y_0\in X$ and $T>0$ were arbitrarily taken, the above implies that for any $\varphi\in X$
and $T>0$,
\begin{equation}\label{yu-6-22-8}
    \|S(T)^*\varphi\|_{X}\leq C
    \left(e^{-\alpha T}\|\varphi\|_{X}+\|K\|_{\mathcal{L}(X;U)}(2\alpha)^{-\frac{1}{2}}
    \|B^*S(T-\cdot)^*\varphi\|_{L^2(0,T;U)}\right).
\end{equation}
Now, \eqref{10.22Liu01}, with $D(\alpha):=C\|K\|_{\mathcal{L}(X;U)}(2\alpha)^{-\frac{1}{2}}$, follows from \eqref{yu-6-22-8} at once. Since $\alpha>0$ was arbitrarily taken, we obtain $(ii)$.

\vskip 5pt
\noindent {\it Step 2. It is trivial that  $(ii)\Rightarrow (iii)$.}

   \vskip 5pt
   \noindent {\it Step 3. We show  $(iii)\Rightarrow (iv)$.}
   \vskip 5pt
Let $T_0$,  $C(\alpha)$ and $D(\alpha, T)$  be given by  $(iii)$.
      Arbitrarily fix  $k\in \mathbb{N}$.
     Let $T_k$ be such that $T_k>T_0$ and $C(k+1)<e^{T_k}$. Write $D(k):=D(k+1,T_k)$.
     Then, by \eqref{yu-6-22-1} (where $\alpha=k+1$ and $T=T_k$), after some direct computations,
      we get (\ref{10-23Liu02}) with the above $T_k$ and $D(k)$.
     Since $k$ was arbitrarily taken from $\mathbb{N}$, we get $(iv)$.

   \vskip 5pt
   \noindent {\it Step 4. We show  $(iv)\Rightarrow (i)$.}
\vskip 5pt

   Arbitrarily fix  $\mu>0$.  We first show that the system \eqref{yu-3-2-2} is exponentially stabilizable.
   To this end, we  take $k_\mu\in \mathbb{N}$ so that $k_\mu-1\leq \mu< k_\mu$. Then by $(iv)$, we can find  $D(k_\mu)>0$ and $T_{k_\mu}>0$ such that
 \begin{equation}\label{10-26Liu01}
    \|S(T_{k_\mu})^*\varphi\|_{X}\leq D(k_\mu)\|B^*S(T_{k_\mu}-\cdot)^*\varphi\|_{L^2(0,T_{k_\mu};U)}+ e^{-k_\mu T_{k_\mu}}\|\varphi\|_{X}\;\;\mbox{for all}\;\;\varphi\in X.
\end{equation}
Meanwhile, we write $S_\mu(t)$ ($t\geq 0$) for  the  $C_0$-semigroup generated by $A+\mu I$. Then it is clear that
\begin{equation}\label{2.8,12.6}
S_\mu(t)^*=e^{\mu t}S(t)^*\;\;\mbox{for all}\;\;t\geq 0.
\end{equation}
Now, multiplying \eqref{10-26Liu01} by $e^{\mu T_{k_\mu}}$, using \eqref{2.8,12.6}, we have
$$
    \|S_\mu(T_{k_\mu})^*\varphi\|_{X}\leq D(k_\mu)e^{\mu T_{k_\mu}}\|B^*S(T_{k_\mu}-\cdot)^*\varphi\|_{L^2(0,T_{k_\mu};U)}+ e^{-(k_\mu-\mu) T_{k_\mu}}\|\varphi\|_{X}.
$$
    Since the first term in the righthand side of above inequality can be written as
\begin{eqnarray*}
D(k_\mu)e^{\mu T_{k_\mu}}\|B^*S(T_{k_\mu}-\cdot)^*\varphi\|_{L^2(0,T_{k_\mu};U)}
&=&D(k_\mu)e^{\mu T_{k_\mu}}\|e^{-\mu(T_{k_\mu}-\cdot)}B^*S_\mu(T_{k_\mu}-\cdot)^*\varphi\|_{L^2(0,T_{k_\mu};U)}\\
&=& D(k_\mu)\|e^{\mu\cdot}B^*S_\mu(T_{k_\mu}-\cdot)^*\varphi\|_{L^2(0,T_{k_\mu};U)},
\end{eqnarray*}
   and the function $e^{\mu  t}, t\in[0,T_{k_\mu}]$ can be dominated by $e^{\mu T_{k_\mu}}$, we get

\begin{equation}\label{10-26Liu001}
    \|S_\mu(T_{k_\mu})^*\varphi\|_{X}\leq D(k_\mu)e^{\mu T_{k_\mu}}\|B^*S_\mu(T_{k_\mu}-\cdot)^*\varphi\|_{L^2(0,T_{k_\mu};U)}+ e^{-(k_\mu-\mu) T_{k_\mu}}\|\varphi\|_{X}\;\;\mbox{for all}\;\;\varphi\in X.
\end{equation}
Since $e^{-(k_\mu-\mu) T_{k_\mu}}<1$, the above
   \eqref{10-26Liu001} leads to   (\ref{10.23Liu06}) with
   \begin{equation*}
   T=T_{k_\mu}>0,\; \alpha=e^{-(k_\mu-\mu) T_{k_\mu}}\in (0,1),\; C=D(k_\mu)e^{\mu T_{k_\mu}}>0.
   \end{equation*}
Then according to Lemma \ref{10.23Liulem1}, the system (\ref{yu-3-2-2}) is exponentially  stabilizable.

We next claim that the system \eqref{yu-3-2-1} is completely stabilizable. Indeed, since the system (\ref{yu-3-2-2}) is exponentially  stabilizable,  there exists $K:=K(\mu)\in\mathcal{L}(X;U)$ and $C(\mu)>0$ such that
\begin{equation*}
\|S_{\mu,K}(t)\|_{\mathcal{L}(X)}\leq C(\mu)\;\;\mbox{for all}\;\;t\in\mathbb{R}^+,
\end{equation*}
 where $S_{\mu,K}(t)$ $(t\geq 0)$ is the semigroup generated by $A+\mu I+BK$.
   This, together with the fact:
   \begin{equation*}
   S_{\mu,K}(t)=e^{\mu t}S_K(t)\;\;\mbox{for all}\;\;t\in\mathbb{R}^+,
   \end{equation*}
        yields that
   \begin{equation*}
\|S_K(t)\|_{\mathcal{L}(X)}\leq C(\mu)e^{-\mu t}\;\;\mbox{for all}\;\;t\in\mathbb{R}^+.
\end{equation*}
           Since  $\mu>0$ was arbitrarily taken, the above  leads to the complete stabilizability for
           the system
      (\ref{yu-3-2-1}), i.e., $(i)$ is true.
\end{proof}

\section{Extensions}\label{yu-sect-3}
In this section we will extend Theorem \ref{yu-theorem-3-4-1} in two directions: The first one is the case that the control operator $B$ is  unbounded, while the second one is the case that the control system  is time-periodic.
\subsection{The case that  the control operator is unbounded}\label{yu-sec-10-18-1}

\par
    This subsection aims to extend
     Theorem \ref{yu-theorem-3-4-1} to the  control system $[A,B]$, i.e.,
   \begin{equation}\label{yu-3-2-11-29}
    y'(t)=Ay(t)+Bu(t),\;\;\;\;t\geq 0,
\end{equation}
   under the  following assumptions:
\begin{enumerate}
  \item [($\widetilde{H_1}$)]  The operator $A$, with its domain\footnote{
     We define a
      norm on $D(A)$ by: $\|x\|_{D(A)}:=\|(\rho_0I-A)x\|_X$, $x\in D(A)$, where $\rho_0\in\rho(A)$
     is arbitrarily fixed.  Then
     $D(A)$ with this norm is a Hilbert space since $A$ as the generator of a $C_0$-semigroup is closed. It is well known that this norm is equivalent to the classical graph norm $\|x\|'_{D(A)}:=(\|x\|^2_X+\|Ax\|^2_X)^{\frac{1}{2}}$, $x\in D(A)$. The same can be said about any generator of
     a $C_0$-semigroup on $X$.} $D(A)\subset X$, is the generator of
   a $C_0$-semigroup $S(t)$ ($t\geq 0$) on $X$.
\end{enumerate}
\begin{enumerate}
 \item[($\widetilde{H_2}$)] The operator $B$ belongs to $\mathcal{L}(U;X_{-1})$, where $X_{-1}$ is the completion of $X$ with respect to the norm $\|z\|_{-1}:=\|(\rho_0I-A)^{-1}z\|_X$, $z\in X$ (where $\rho_0\in \rho(A)$ is arbitrarily fixed).
\end{enumerate}
     \begin{enumerate}
  \item[($\widetilde{H_3}$)] There exists a time $T>0$ and a constant  $C(T)>0$ such that
\begin{equation}\label{yu-10-4-2}
    \int_0^T\|B^*S(t)^*x\|_U^2dt\leq C(T)\|x\|_X^2\;\;\mbox{for all}\;\;x\in D(A^*).
\end{equation}
     (This condition is called the
    regularity property or the admissibility condition (see, for example, \cite[Chapter 2, Section 2.3]{Coron} or
    \cite{Lasiecka-Triggiani}). Here, we notice that $B^*\in \mathcal{L}(D(A^*);U)$ by $(\widetilde{H_2})$ and $(d_3)$ in Remark \ref{yu-remark-12-1} below.)
  \end{enumerate}
  Given  $y_0\in X$ and $u\in L^2(\mathbb{R}^+; U)$, we write $y(\cdot;u,y_0)$
  for the solution to \eqref{yu-3-2-11-29} with the initial condition $y(0)=y_0$.
\begin{remark}\label{yu-remark-12-1}
Several comments on the above assumptions are given.
\begin{enumerate}

\item[$(d_1)$] The operator $A$ (which belongs to $\mathcal{L}(D(A);X)$) has a unique extension, denoted by
$\widetilde{A}$, in the space $\mathcal{L}(X;X_{-1})$, moreover $(\rho_0I-\widetilde{A})^{-1}\in \mathcal{L}(X_{-1},X)$ (see \cite[Chapter 2, Proposition 2.10.3]{Tucsnak-Weiss}). Hence, $(\widetilde{H_2})$ can be replaced by
  the assumption:
  $(\rho_0I-\widetilde{A})^{-1}B\in \mathcal{L}(U;X)$ (see \cite{Flandoli-Lasiecka-Triggiani, Lasiecka-Triggiani}).

  \item[$(d_2)$] Let
   $\widetilde{S}(t):=(\rho_0I-\widetilde{A})S(t)(\rho_0I-\widetilde{A})^{-1}$ on $X_{-1}$ for any $t\geq 0$. Then $\widetilde{S}(t)$ $(t\geq 0)$ is a $C_0$-semigroup on $X_{-1}$ and  $\widetilde{A}$ is the generator of this semigroup (see \cite[Chapter 2, Proposition 2.10.4]{Tucsnak-Weiss}). We call  $\widetilde{S}(t)$ $(t\geq 0)$ as the extension of $S(t)$ $(t\geq 0)$.

\item[$(d_3)$] The space $D(A^*)$, with the norm $\|z\|_{D(A^*)}:=\|(\overline{\rho_0}I-A^*)z\|_X$, $z\in D(A^*)$, is a Hilbert space and
  $X_{-1}$ is isomorphic to the dual space of $D(A^*)$ (see \cite[Chapter 2, Proposition 2.10.1 and Proposition 2.10.2]{Tucsnak-Weiss}). For convenience, we identify the dual space of $D(A^*)$ with $X_{-1}$. Thus, $X_{-1}$ is the dual space of $D(A^*)$ with respect to the pivot space $X$ (see \cite[Chapter 2, Section 2.9]{Tucsnak-Weiss}).
\item[$(d_4)$] Assumption $(\widetilde{H_3})$ is equivalent to that for any $T>0$, there exists a constant $C(T)>0$ such that (\ref{yu-10-4-2}) holds. (See \cite[Chapter 4, Proposition 4.3.2]{Tucsnak-Weiss}.)
\item[$(d_5)$] We can easily check that when $(\widetilde{H_1})$-$(\widetilde{H_3})$
hold, $B^*$ is
     an admissible observation operator and consequently $B$ is an admissible control operator.
      (See \cite[Chapter 4, Definition 4.3.1]{Tucsnak-Weiss},
         \cite[Chapter 4, Definition 4.2.1]{Tucsnak-Weiss}
     and \cite[Chapter 4, Theorem 4.4.3]{Tucsnak-Weiss}.)
Hence, if $(\widetilde{H_1})$-$(\widetilde{H_3})$ are true,
      then, it follows by \cite[Chapter 4, Proposition 4.2.5]{Tucsnak-Weiss}) that when $y_0\in X$ and $u\in L^2(\mathbb{R}^+; U)$, the equation (\ref{yu-3-2-11-29}) has a unique solution $y(\cdot;u,y_0)$ (in $C([0,+\infty);X)$) which is given by $y(t;u,y_0)=\widetilde{S}(t)y_0
      +\int_0^t\widetilde{S}(t-s)Bu(s)ds$. Moreover, for each $T>0$, there exists $C:=C(T)>0$ such that
       \begin{equation*}\label{Liu-11-5-01}
       \|y(t;u,y_0)\|_X\leq C(\|y_0\|_X+\|u\|_{L^2(0,T; U)}), \;\; t\in[0,T].
       \end{equation*}

\item[$(d_6)$] When $u\in L^2(\mathbb{R}^+;U)$ and $t\geq 0$, we only have  $\int_0^t \widetilde{S}(t-s)Bu(s)ds\in X_{-1}$
        under the assumptions $(\widetilde{H_1})$-$(\widetilde{H_2})$;
        but it holds that $\int_0^t \widetilde{S}(t-s)Bu(s)ds\in X$, if we     further assume $(\widetilde{H_3})$ (see \cite[Chapter 4, Proposition 4.2.2]{Tucsnak-Weiss}).

\item[$(d_7)$]
Some examples  satisfying $(\widetilde{H_1})$-$(\widetilde{H_3})$ are given in \cite{Flandoli-Lasiecka-Triggiani, Lasiecka-Triggiani, Tucsnak-Weiss}.

\end{enumerate}
      \end{remark}

  \par
 Throughout this subsection,  $\widetilde{A}$ and  $\widetilde{S}(t)$ $(t\geq 0)$
  denote respectively the extensions of  $A$ and $S(t)$ $(t\geq 0)$, which are given in $(d_1)$ and $(d_2)$ of Remark \ref{yu-remark-12-1}.
\par
     To state the main results of this subsection, we need
     the following definitions  on the stabilization for the system (\ref{yu-3-2-11-29}):
\begin{definition}\label{yu-definition}
    \begin{enumerate}
      \item [(i)]  The system (\ref{yu-3-2-11-29}) is said to be exponentially stabilizable, if there exists  a $C_0$-semigroup $\Phi(t)$ ($t\geq 0$) on $X$, with  its generator $\Lambda : D(\Lambda)\subset X\to X$, and  $K\in\mathcal{L}(D(\Lambda);U)$ such that
\begin{enumerate}
  \item [$(a)$] $\Lambda x=(\widetilde{A}+BK)x$ for all $x\in D(\Lambda)$;
  \item [$(b)$]  there exists  $\alpha>0$ and  $C>0$ such that
    $\|\Phi(t)\|_{\mathcal{L}(X)}\leq Ce^{-\alpha t}$ for any $t\in\mathbb{R}^+$;
  \item [$(c)$]  there exists  $D>0$
  such that
    $\|K\Phi(\cdot)x\|_{L^2(\mathbb{R}^+;U)}\leq D\|x\|_X$ for all $x\in D(\Lambda)$.
    \end{enumerate}
      \item [(ii)] The system \eqref{yu-3-2-11-29} is said to be completely stabilizable,
      if for any $\alpha>0$,   there exists a $C_0$-semigroup $\Phi_\alpha(t)$ ($t\geq 0$) on $X$, with its generator $\Lambda_\alpha: D(\Lambda_\alpha)\subset X\to X$, and  $K_\alpha\in\mathcal{L}(D(\Lambda_\alpha);U)$
      such that
\begin{enumerate}
\item[($a'$)] $\Lambda_\alpha x=(\widetilde{A}+BK_\alpha)x$ for all $x\in D(\Lambda_\alpha)$;
  \item[($b'$)] there exists  $C(\alpha)>0$ such that
    $\|\Phi_\alpha(t)\|_{\mathcal{L}(X)}\leq C(\alpha)e^{-\alpha t}$ for any $t\in\mathbb{R}^+$;
  \item[($c'$)] there exists  $D(\alpha)>0$
  such that
    $\|K_\alpha \Phi_\alpha(\cdot)x\|_{L^2(\mathbb{R}^+;U)}\leq D(\alpha)\|x\|_X$ for all $x\in D(\Lambda_\alpha)$.
\end{enumerate}
    \end{enumerate}
\end{definition}
\begin{remark}
Definition \ref{yu-definition} is inspired by  \cite{Flandoli-Lasiecka-Triggiani, Lasiecka-Triggiani}, where the authors proved that the solvability of the LQ problem $V(y_0)=\inf_{u\in L^2(\mathbb{R}^+;U)}\int_0^\infty[\|y(t;u,y_0)\|_X^2+\|u(t)\|_U^2]dt$ (i.e.,
$V(y_0)<+\infty$ for all $y_0\in X$)
implies the exponential stabilizability of the system (\ref{yu-3-2-11-29}) in the sense of $(i)$ in  Definition \ref{yu-definition}.
On the other hand, with the aid of Lemma \ref{yu-lemma-10-5-2} below, we obtain the reverse.
Hence, the solvability of the above LQ problem is equivalent to
 the exponential stabilizability of the system (\ref{yu-3-2-11-29}) in the sense of $(i)$ in  Definition \ref{yu-definition}
(see Proposition \ref{corollary-1-3-1} below).

  Besides, it deserves mentioning that  Definition \ref{yu-definition}
can be viewed as
  the dual of the concept of estimatability (see \cite[Definition 2.1]{Ramdani-Tucsnak-Weiss}).

\end{remark}

      The main result in this subsection is as follows:
\begin{theorem}\label{yu-theorem-10-5-1}
    Suppose that  $(\widetilde{H_1})$-$(\widetilde{H_3})$ are true. Then the following  statements  are equivalent:
\begin{enumerate}
  \item [(i)] The  system \eqref{yu-3-2-11-29} is completely stabilizable.
  \item [(ii)] For any $\alpha>0$, there are positive constants $C(\alpha)$ and $D(\alpha)$ such that
\begin{equation}\label{yu-11-30-1}
    \|S(T)^*\varphi\|_{X}\leq D(\alpha)\|B^*S(T-\cdot)^*\varphi\|_{L^2(0,T;U)}+C(\alpha) e^{-\alpha T}\|\varphi\|_{X},\;\;\mbox{when}\;\;\varphi\in D(A^*),\;T>0.
\end{equation}
  \item [(iii)] There exists $T_0\geq 0$ such that for any $T>T_0$ and $\alpha>0$,
  there are positive constants $C(\alpha)$ (which is independent of $T$) and $D(\alpha, T)$ such that
\begin{equation}\label{yu-11-30-2}
    \|S(T)^*\varphi\|_{X}\leq D(\alpha,T)\|B^*S(T-\cdot)^*\varphi\|_{L^2(0,T;U)}+C(\alpha) e^{-\alpha T}\|\varphi\|_{X},\;\;\mbox{when}\;\;\varphi\in D(A^*).
\end{equation}
 \item [(iv)] For each $k\in \mathbb{N}$, there are positive constants $T_k$ and $D(k)$ such that
 \begin{equation}\label{yu-11-30-3}
    \|S(T_k)^*\varphi\|_{X}\leq D(k)\|B^*S(T_k-\cdot)^*\varphi\|_{L^2(0,T_k;U)}+ e^{-k T_k}\|\varphi\|_{X},\;\;\mbox{when}\;\;\varphi\in D(A^*).
\end{equation}
\end{enumerate}
\end{theorem}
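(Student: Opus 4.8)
The plan is to mirror the proof of Theorem 1.1 as closely as possible, cycling through $(i)\Rightarrow(ii)\Rightarrow(iii)\Rightarrow(iv)\Rightarrow(i)$, and to isolate the places where the unboundedness of $B$ genuinely forces extra care. Three of the four implications should go through essentially unchanged. Indeed, $(ii)\Rightarrow(iii)$ is again trivial (take any $T_0\geq 0$ and restrict $T$), and $(iii)\Rightarrow(iv)$ is the same purely algebraic manipulation: fix $k\in\mathbb N^+$, choose $T_k>T_0$ large enough that $C(k+1)<e^{T_k}$, set $D(k):=D(k+1,T_k)$, and read off \eqref{yu-11-30-3} from \eqref{yu-11-30-2} with $\alpha=k+1$. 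The only bookkeeping difference is that every inequality now carries the qualifier $\varphi\in D(A^*)$ rather than $\varphi\in X$, which is harmless throughout since no implication among $(ii)$, $(iii)$, $(iv)$ ever needs to extend an estimate to all of $X$.

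For $(i)\Rightarrow(ii)$ I would repeat Step~1 of the bounded case. Fix $\alpha>0$ and take $K\in\mathcal L(D(A);U)$ satisfying $(a)$–$(d)$ of Definition~\ref{yu-definition}, with $\|S_K(t)\|_{\mathcal L(X)}\leq C(\alpha)e^{-\alpha t}$. For $y_0\in X$ (first assume $y_0\in D(A_K)$, then pass to the limit by density and by the bound \eqref{Liu-11-5-01}) set $u_{y_0}(t):=KS_K(t)y_0$; condition $(d)$ gives the crucial $L^2$-in-time control $\|u_{y_0}\|_{L^2(\mathbb R^+;U)}\leq D(\alpha)\|y_0\|_X$, which replaces the explicit computation \eqref{wang2.811.1} that relied on $K\in\mathcal L(X;U)$. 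The variation-of-constants identity $S_K(T)y_0=S(T)y_0+\int_0^T S(T-t)Bu_{y_0}(t)\,dt$ holds in $X_{-1}$, but pairing against $\varphi\in D(A^*)$ and using the admissibility $(\widetilde{H_3})$ makes the duality pairing $\int_0^T\langle u_{y_0}(t),B^*S(T-t)^*\varphi\rangle_U\,dt$ well-defined and bounded by $\|u_{y_0}\|_{L^2}\,\|B^*S(T-\cdot)^*\varphi\|_{L^2(0,T;U)}$; from here the same estimate on $|\langle y_0,S(T)^*\varphi\rangle_X|$ yields \eqref{yu-11-30-1} with $D(\alpha)$ from $(d)$. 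The restriction $\varphi\in D(A^*)$ is exactly what guarantees $B^*S(T-\cdot)^*\varphi$ is a genuine $L^2$ function via $(\widetilde{H_3})$.

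The implication $(iv)\Rightarrow(i)$ is where I expect the real work, and it is the main obstacle. The scaling argument is identical in spirit: fixing $\mu>0$, choosing $k_\mu$ with $k_\mu-1\leq\mu<k_\mu$, and multiplying \eqref{yu-11-30-3} by $e^{\mu T_{k_\mu}}$ via $S_\mu(t)^*=e^{\mu t}S(t)^*$ produces a weak observability inequality of the form \eqref{10.23Liu06} for the shifted system $[A+\mu I,B]$, now valid on $D(A^*)=D((A+\mu I)^*)$. The difficulty is that Lemma~\ref{10.23Liulem1}, as quoted, is stated for bounded $B$, so to conclude exponential stabilizability of $[A+\mu I,B]$ I need an admissible-$B$ analogue of that lemma — the characterization of exponential stabilizability by a single weak observability inequality under $(\widetilde{H_1})$–$(\widetilde{H_3})$. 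I would either invoke such a result from the literature (e.g.\ the Flandoli–Lasiecka–Triggiani / Lasiecka–Triggiani framework cited for Definition~\ref{yu-definition}, or Tucsnak–Weiss) as the admissible counterpart of Lemma~\ref{10.23Liulem1}, or prove it directly by constructing the feedback through an algebraic Riccati / minimal-norm control operator and checking that the resulting $K$ meets properties $(a)$–$(d)$; condition $(d)$, the uniform $L^2$-bound on the feedback control, is precisely the point that must be verified carefully in the unbounded setting. Granting that lemma, the final descaling $S_{\mu,K}(t)=e^{\mu t}S_K(t)$ converts the uniform boundedness of $S_{\mu,K}$ into $\|S_K(t)\|_{\mathcal L(X)}\leq C(\mu)e^{-\mu t}$, and since $\mu>0$ was arbitrary, complete stabilizability of \eqref{yu-3-2-11-29} follows.
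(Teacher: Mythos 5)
Your implications $(i)\Rightarrow(ii)$, $(ii)\Rightarrow(iii)$ and $(iii)\Rightarrow(iv)$ coincide with the paper's proof: the same density argument on $D(A_K)$, the same use of condition $(d)$ of Definition \ref{yu-definition} in place of the explicit computation \eqref{wang2.811.1}, the same pairing against $\varphi\in D(A^*)$ justified by $(\widetilde{H_3})$, and the same algebra for $(iii)\Rightarrow(iv)$. The problem is $(iv)\Rightarrow(i)$, which you correctly flag as the hard step but then leave resting on an ``admissible-$B$ analogue of Lemma \ref{10.23Liulem1}'' that is neither quoted in the paper nor supplied by your sketch. Your fallback --- ``prove it directly by constructing the feedback through an algebraic Riccati operator and checking $(a)$--$(d)$'' --- skips the one step that makes the Riccati machinery applicable at all: Lemma \ref{yu-lemma-10-5-1} can only be invoked once one knows $\mathcal{U}_{ad}(y_0)\neq\emptyset$ for every $y_0$, i.e.\ that the shifted system $[A+\beta I,B]$ is optimizable. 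Deriving that from the weak observability inequality is the actual content of the implication, and nothing in your plan addresses it.

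Concretely, the paper's Step 4 first converts \eqref{yu-11-30-3} into cost-uniform $e^{-kT_k}$-null controllability via Lemma \ref{yu-lemma-10-5-2}, with $k$ chosen so that $k-1\le 2\beta<k$ (the factor $2$ is essential: the decay rate of the concatenated trajectory must strictly exceed $\beta$ so that it survives multiplication by $e^{\beta t}$). It then concatenates these controls over the intervals $[iT_k,(i+1)T_k)$, using the admissibility bound \eqref{Liu-11-5-01} to control the state on each subinterval, and rescales to produce $\hat v\in\mathcal{U}^\beta_{ad}(y_0)$ satisfying \eqref{yu-10-6-4}. Only then does Lemma \ref{yu-lemma-10-5-1} yield the feedback $K=-B^*P$; moreover the cost bound in \eqref{yu-10-6-4} is also what delivers property $(d)$ (the uniform $L^2$ bound on $KS_K(\cdot)y_0$), which you rightly identify as delicate but give no mechanism for. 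Without this bridge from the weak observability inequality to optimizability, your argument for $(iv)\Rightarrow(i)$ does not close.
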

\begin{remark}\label{yu-remark-12-1-2}
    It follows from (\ref{yu-10-4-2}) that the operators
\begin{equation*}\label{yu-11-4-1}
\begin{cases}
    (x\in D(A^*))\to ((t\rightarrow B^*S(t)^*x)\in L^2(0,T;U)),\\
    (x\in D(A^*))\to ((t\rightarrow B^*S(T-t)^*x)\in L^2(0,T;U)),
\end{cases}
\end{equation*}
    can be extended in a unique way as linear and bounded operators from $X$ into $L^2(0,T;U)$. If we denote these extensions
    in the same manners, then (\ref{yu-11-30-1}) is equivalent to
    \begin{equation*}
    \|S(T)^*\varphi\|_{X}\leq D(\alpha)\|B^*S(T-\cdot)^*\varphi\|_{L^2(0,T;U)}+C(\alpha) e^{-\alpha T}\|\varphi\|_{X},\;\;\mbox{when}\;\;\varphi\in X,\;T>0.
\end{equation*}
    The same can be said about  (\ref{yu-11-30-2}) and (\ref{yu-11-30-3}).
\end{remark}

Before proving Theorem \ref{yu-theorem-10-5-1},  we give some preliminaries. The first one is about the
LQ problem
\begin{equation}\label{yu-22-1-4-1}
    \textbf{\mbox{(LQ)}}_{y_0}:  \;\;\inf_{u\in L^2(\mathbb{R}^+;U)}J(u;y_0), \;\;y_0\in X,
\end{equation}
     where
\begin{equation}\label{yu-10-4-3}
    J(u;y_0):=\int_0^{\infty}[\|y(t;u,y_0)\|_X^2+\|u(t)\|_{U}^2]dt,\;\; u\in L^2(\mathbb{R}^+;U).
\end{equation}
    Let
    \begin{equation}\label{yu-22-1-3-100}
    \mathcal{U}_{ad}(y_0):=\{u\in L^2(\mathbb{R}^+;U):y(\cdot;u,y_0)\in L^2(\mathbb{R}^+;X)\}.
\end{equation}
    
The following Lemma \ref{yu-lemma-10-5-1} can be found in \cite[Theorem 5.2, Page 40]{Lasiecka-Triggiani} (see also \cite[Theorem 2.2]{Flandoli-Lasiecka-Triggiani} and \cite[Propositions 3.2-3.4]{Weiss-Rebarber}):

\begin{lemma}\label{yu-lemma-10-5-1}
      Assume that $(\widetilde{H_1})$-$(\widetilde{H_3})$ hold.  Suppose that  $\mathcal{U}_{ad}(y_0)\neq \emptyset$
      for any $y_0\in X$. Then for each $y_0\in X$, the problem $\textbf{(LQ)}_{y_0}$ has a unique solution $u_{y_0}^*$. Moreover,  there exists a  self-adjoint and non-negative operator  $P\in \mathcal{L}(X)$ and a $C_0$-semigroup $S_P(t)$ ($t\geq 0$) on $X$, with its generator $A_P:D(A_P)\subset X\to X$,
      such that the following conclusions are true:
\begin{enumerate}
\item[(i)] It holds that $P\in\mathcal{L}(D(A_P);D(A^*))$ and $B^*P\in  \mathcal{L}(D(A_P);U)$.
   \item [(ii)] For each $x\in D(A_P)$, $A_Px=(\widetilde{A}-BB^*P)x$.
\item[(iii)] If $y_0\in D(A_P)$, then $u^*_{y_0}(t)=-B^*P S_P(t)y_0$ for a.e. $t\in\mathbb{R}^+$.
  \item [(iv)] The semigroup $S_P(\cdot)$  is exponentially stable on $X$, i.e.,
there exists $C>0$ and $\alpha>0$  (depending  on $P$) such that
$\|S_P(t)\|_{\mathcal{L}(X)}\leq Ce^{-\alpha t}$ for any $t\in\mathbb{R}^+$.
\end{enumerate}
\end{lemma}

\begin{remark}\label{yu-remark-12-1-3}
 In general, to ensure  the conclusion  $(iv)$ in  Lemma \ref{yu-lemma-10-5-1},  one needs the \emph{detectability condition} given in
     \cite[(D.C), Page 41]{Lasiecka-Triggiani}. Fortunately, this condition
     holds automatically in our setting, since the operator
     $R$ (given in \cite[(D.C), Page 41]{Lasiecka-Triggiani}) is the identity operator currently.
\end{remark}

 The next lemma  is  the extension of
\cite[Proposition 6]{Trelat-Wang-Xu} to the current setting. Since the proof is the same as that of
\cite[Proposition 6]{Trelat-Wang-Xu}, we omit it.

\begin{lemma}\label{yu-lemma-10-5-2}
    Suppose that  $(\widetilde{H_1})$-$(\widetilde{H_3})$ hold. Let $T>0$ and $\alpha>0$. Then the following
    statements  are  equivalent:
\begin{enumerate}
  \item [(i)] The  system \eqref{yu-3-2-11-29} is cost-uniformly $\alpha$-null controllable in time $T>0$, i.e., there exists $C(\alpha,T)\geq0$ such that for each $y_0\in X$, there exists  $u\in L^2(0,T;U)$ such that $\|y(T;u,y_0)\|_X\leq \alpha\|y_0\|_X$ and $\|u\|_{L^2(0,T;U)}\leq C(\alpha,T)\|y_0\|_X$.
  \item [(ii)] There exists  $C(\alpha,T)\geq 0$ such that
\begin{equation*}
    \|S(T)^*\varphi\|_{X}\leq C(\alpha,T)\|B^*S(T-\cdot)^*\varphi\|_{L^2(0,T;U)}+\alpha \|\varphi\|_{X}\;\;\mbox{for any}\;\;\varphi\in D(A^*).
\end{equation*}
    Moreover, $C(\alpha,T)$ in both (i) and (ii) can be taken as the same.
\end{enumerate}
\end{lemma}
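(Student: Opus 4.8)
The plan is to recast both conditions as a single duality between a bounded \emph{input-to-state} operator and its adjoint, and then to prove the two implications by elementary Hilbert-space duality. First I would record that, under $(\widetilde{H_1})$--$(\widetilde{H_3})$, the formula $L_T u:=\int_0^T S(T-s)Bu(s)\,ds$ defines a bounded operator $L_T\in\mathcal{L}(L^2(0,T;U);X)$ --- this is precisely the content of the admissibility estimate \eqref{Liu-11-5-01} --- so that $y(T;u,y_0)=S(T)y_0+L_T u$. A direct computation gives $(L_T^*\varphi)(s)=B^*S(T-s)^*\varphi$ for $\varphi\in D(A^*)$, and by Remark \ref{yu-remark-12-1-2} this extends continuously to all $\varphi\in X$. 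Hence (ii) is equivalent to the operator inequality $\|S(T)^*\varphi\|_X\le C(\alpha,T)\|L_T^*\varphi\|_{L^2(0,T;U)}+\alpha\|\varphi\|_X$ for all $\varphi\in X$, while (i) reads: for every $y_0\in X$ there is $u$ with $\|S(T)y_0+L_Tu\|_X\le\alpha\|y_0\|_X$ and $\|u\|_{L^2(0,T;U)}\le C(\alpha,T)\|y_0\|_X$.

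For the implication (i)$\Rightarrow$(ii), the easy half, I would fix $\varphi\in X$ and an arbitrary $y_0\in X$, take the control $u$ provided by (i), and estimate $|\langle y_0,S(T)^*\varphi\rangle_X|=|\langle S(T)y_0,\varphi\rangle_X|$ by splitting off $\langle S(T)y_0+L_Tu,\varphi\rangle_X$ and $\langle u,L_T^*\varphi\rangle_{L^2(0,T;U)}$. The target and cost bounds then yield $|\langle y_0,S(T)^*\varphi\rangle_X|\le(\alpha\|\varphi\|_X+C(\alpha,T)\|L_T^*\varphi\|_{L^2(0,T;U)})\|y_0\|_X$, and taking the supremum over the unit ball of $y_0$ gives (ii) with the \emph{same} constant $C(\alpha,T)$; restricting to $\varphi\in D(A^*)$ recovers the stated form.

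The implication (ii)$\Rightarrow$(i) is where the work lies, and I would prove it by convex duality with sharp constant. Fix $y_0$, set $\rho:=C(\alpha,T)\|y_0\|_X$, and consider the closed convex set $\mathcal{K}:=\overline{\{L_Tu:\|u\|_{L^2(0,T;U)}\le\rho\}}\subset X$, whose support function is $\sigma_{\mathcal{K}}(\varphi)=\rho\,\|L_T^*\varphi\|_{L^2(0,T;U)}$. Using the minimax (Sion) identity $\mathrm{dist}(S(T)y_0,\mathcal{K})=\sup_{\|\varphi\|_X\le1}(\langle S(T)y_0,\varphi\rangle_X-\sigma_{\mathcal{K}}(\varphi))$ and invoking (ii) to bound $\langle S(T)y_0,\varphi\rangle_X=\langle y_0,S(T)^*\varphi\rangle_X\le\|y_0\|_X(C(\alpha,T)\|L_T^*\varphi\|_{L^2(0,T;U)}+\alpha\|\varphi\|_X)$, the penalty term cancels and I obtain $\mathrm{dist}(S(T)y_0,\mathcal{K})\le\alpha\|y_0\|_X$. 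No step inflates the constant, which is what forces the cost constant in (i) to coincide with the observability constant in (ii) and settles the final ``moreover'' assertion.

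The recovery of an \emph{honest} admissible control is the main obstacle, since the distance is only attained by a point of the closure $\mathcal{K}$, not a priori of the image of the ball. I would resolve this by weak compactness: the projection theorem gives a nearest point $w^\ast\in\mathcal{K}$ with $\|S(T)y_0-w^\ast\|_X\le\alpha\|y_0\|_X$, and writing $w^\ast=\lim_n L_Tu_n$ with $\|u_n\|_{L^2(0,T;U)}\le\rho$, I extract a weakly convergent subsequence $u_n\rightharpoonup u$; then $\|u\|_{L^2(0,T;U)}\le\liminf_n\|u_n\|_{L^2(0,T;U)}\le\rho$, while weak continuity of $L_T$ forces $L_Tu=w^\ast$. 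Taking $\widetilde u:=-u$ (legitimate since the admissible ball is symmetric) gives $\|S(T)y_0+L_T\widetilde u\|_X=\|S(T)y_0-w^\ast\|_X\le\alpha\|y_0\|_X$ with $\|\widetilde u\|_{L^2(0,T;U)}\le C(\alpha,T)\|y_0\|_X$, which is exactly (i). The only feature special to the unbounded-$B$ setting is the passage through $D(A^*)$, already taken care of by $(\widetilde{H_3})$ together with Remark \ref{yu-remark-12-1-2}.
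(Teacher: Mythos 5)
Your proposal is correct. Note that the paper does not actually write out a proof of this lemma: it omits it, citing \cite[Proposition 6]{Trelat-Wang-Xu}, so there is no in-text argument to compare against line by line. The standard proof in that reference is also a Hilbert-space duality argument, typically packaged as a range-inclusion/factorization lemma (``$\|F^*\varphi\|\le C\|G^*\varphi\|+\alpha\|\varphi\|$ for all $\varphi$ iff every $Fy_0$ can be written as $G u+v$ with $\|u\|\le C\|y_0\|$, $\|v\|\le\alpha\|y_0\|$'') or as a minimization of a penalized HUM-type functional. Your version --- expressing (i) as $\mathrm{dist}(S(T)y_0,\mathcal{K})\le\alpha\|y_0\|$ for $\mathcal{K}=\overline{L_T(B_\rho)}$, computing the support function $\sigma_{\mathcal K}(\varphi)=\rho\|L_T^*\varphi\|$, and applying the minimax identity for the distance function --- is the same duality idea in support-function clothing; it has the virtue of making the exact preservation of the constant $C(\alpha,T)$ completely transparent, which is the ``moreover'' clause. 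The easy direction and the weak-compactness extraction of an admissible control are both handled correctly (indeed $L_T(B_\rho)$ is already weakly compact, hence norm-closed, so the closure is harmless). The one point worth writing out more carefully in the unbounded-$B$ setting is the identity $(L_T^*\varphi)(s)=B^*S(T-s)^*\varphi$: for $\varphi\in D(A^*)$ the computation $\langle L_Tu,\varphi\rangle_X=\int_0^T\langle u(s),B^*S(T-s)^*\varphi\rangle_U\,ds$ uses the $X_{-1}$--$D(A^*)$ duality pairing, and the passage to all $\varphi\in X$ rests on $(\widetilde{H_3})$ via Remark \ref{yu-remark-12-1-2}, exactly as you indicate; with that spelled out the argument is complete.
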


\begin{proof}[Proof of Theorem  \ref{yu-theorem-10-5-1}] We organize the proof in several steps.
\vskip 5pt
\noindent \emph{Step 1. We show $(i)\Rightarrow (ii)$.}
\vskip 5pt

Suppose that $(i)$ holds, i.e., $[A,B]$ is completely stabilizable in the sense of
Definition \ref{yu-definition}.
Arbitrarily fix $\alpha>0$.
Then by $(ii)$ in Definition \ref{yu-definition}, there exists a $C_0$-semigroup $\Phi_{\alpha}(t)$ $(t\geq 0)$, with the generator
$\Lambda_{\alpha}:D(\Lambda_{\alpha})\subset X\to X$, and  $K_\alpha\in \mathcal{L}(\Lambda_\alpha);U)$ such that $(a')$-$(c')$ are true.  Several
observations are given in order:
First, by $(a')$ and $(c')$, we have that, for any $y_0\in D(\Lambda_\alpha)$,
\begin{equation*}\label{yu-9-14-1}
   \Phi_\alpha(t)y_0=\widetilde{S}(t)y_0+\int_0^t\widetilde{S}(t-s)BK_\alpha \Phi_\alpha(s)y_0ds,\;\;
    \;t\in\mathbb{R}^+.
\end{equation*}
   Here, we notice that $\Phi_\alpha(t)y_0\in D(\Lambda_\alpha)$ for each $t\in\mathbb{R}^+$ when $y_0\in D(\Lambda_\alpha)$. Second, by $(b')$, we can find $C(\alpha)>0$ such that
$\|\Phi_\alpha(t)\|_{\mathcal{L}(X)}\leq C(\alpha) e^{-\alpha t}$  for all  $t\in\mathbb{R}^+$.
     Third, we let, for each  $y_0\in D(\Lambda_\alpha)$,
     $u_{y_0}(t):=K_\alpha \Phi_\alpha(t)y_0$, $t\in \mathbb{R}^+$. Then
        it follows from $(c')$ of Definition \ref{yu-definition} that there exists $D(\alpha)>0$ (independent of  $y_0$)
     such that
    $\|u_{y_0}(\cdot)\|_{L^2(\mathbb{R}^+;U)}\leq D(\alpha)\|y_0\|_X$, $\;y_0\in D(\Lambda_\alpha)$.

From these observations and by a very similar way as that used in the proof of
$(i)\Rightarrow (ii)$ of Theorem~\ref{yu-theorem-3-4-1}, we can verify that, for each
$y_0\in D(\Lambda_\alpha)$,
\begin{eqnarray*}
&\;&|\langle y_0, S(T)^*\varphi\rangle_{X}|=|\langle S(T)y_0,\varphi\rangle_X|
=|\langle \widetilde{S}(T)y_0,\varphi\rangle_X|
=|\langle \widetilde{S}(T)y_0, \varphi\rangle_{X_{-1},D(A^*)}|\nonumber\\
&=&\left|\left\langle\int_0^T\widetilde{S}(T-s)Bu_{y_0}(s)ds, \varphi\right\rangle_{X_{-1},D(A^*)}-\langle \Phi_\alpha(T)y_0,\varphi\rangle_{X_{-1},D(A^*)}\right|\nonumber\\
&=& \left|\int_0^T\langle u_{y_0}(s),B^*S^*(T-s)\varphi\rangle_Uds-\langle \Phi_\alpha(T)y_0,\varphi\rangle_X\right|\nonumber\\
&\leq&
\left(C(\alpha) e^{-\alpha T}\|\varphi\|_{X}+D(\alpha)
    \|B^*S(T-\cdot)^*\varphi\|_{L^2(0,T;U)}\right)\|y_0\|_X,\;\;\mbox{when}\;\;\varphi\in D(A^*),\; T>0.
\end{eqnarray*}
  The above, along with the density of $D(\Lambda_\alpha)$  in $X$, leads to  \eqref{yu-11-30-1}.
The reason why $D(\Lambda_\alpha)$ is dense in $X$ is that $\Lambda_\alpha$ is the generator of the semigroup
$\Phi_\alpha(t)$ ($t\geq 0$)
(see \cite[Chapter 1, Theorem 1.3]{Pazy}).

\vskip 5pt
   \noindent \emph{Step 2. It is trivial that  $(ii)\Rightarrow (iii)$.}
\vskip 5pt
   \noindent \emph{Step 3.  The proof of $(iii)\Rightarrow (iv)$ is very similar to that used in the proof of  Theorem \ref{yu-theorem-3-4-1}. We omit it.}
\vskip 5pt
    \noindent\emph{Step 4.  We show  $(iv)\Rightarrow (i)$.}
\vskip 5pt
    Arbitrarily fix $\beta>0$ and $y_0\in X$. Let
    $\mathcal{U}^\beta_{ad}(y_0):=\{v\in L^2(\mathbb{R}^+;U):z_\beta(\cdot;v,y_0)\in L^2(\mathbb{R}^+;X)\}$,
    where $z_\beta(\cdot;v,y_0)$ (with $v\in L^2(\mathbb{R}^+;U)$) is the unique solution to the system:
    $z'(t)=(A+\beta I)z(t)+Bv(t)$,\;  $t\in \mathbb{R}^+$;\;\;$ z(0)=y_0$.
    One can directly check that, for each $t\geq 0$,
    $z_\beta(t;v,y_0)=\widetilde{S}_\beta(t)y_0
    +\int_0^t\widetilde{S}_\beta(t-s)Bv(s)ds$. (Here,
    $\widetilde{S}_\beta(t)$ ($t\geq 0$) is the $C_0$-semigroup on $X_{-1}$, generated by $\widetilde{A}+\beta I:X\to X_{-1}$.
    It is easy to check that $\widetilde{S}_\beta(t)=e^{\beta t}\widetilde{S}(t),\;t\geq 0$.)  This, along with the note $(d_4)$ in
     Remark \ref{yu-remark-12-1}, yields that for each $T>0$,  $z_\beta(\cdot;v,y_0)\in C([0,T];X)$.
Consider the  next LQ problem
\begin{equation*}\label{yu-10-6-5}
   \mathbf{(LQ)}^\beta_{y_0}:\ \ \ \  \inf_{v\in L^2(\mathbb{R}^+;U)}\left\{J^\beta(v;y_0)
   :=\int_0^{\infty}[\|z_\beta(t;v,y_0)\|_X^2+\|v(t)\|_{U}^2]dt\right\}.
\end{equation*}
   The rest of the proof of this step is
    divided into  two sub-steps.

    \vskip 5pt

   \noindent \emph{Sub-step 4.1. We prove $\mathcal{U}^\beta_{ad}(y_0)\neq\emptyset$.}

   \vskip 5pt

    Clearly, this will be done, if one can show
     the existence of $\hat{v}\in L^2(\mathbb{R}^+;U)$  such that
\begin{equation}\label{yu-10-6-4}
    \|z_\beta(\cdot;\hat{v},y_0)\|_{L^2(\mathbb{R}^+;X)}\leq \tilde{C}(\beta)\|y_0\|_X\;\;\mbox{and}\;\;
    \|\hat{v}\|_{L^2(\mathbb{R}^+;U)}\leq \tilde{D}(\beta)\|y_0\|_X,
\end{equation}
for some $\tilde{C}(\beta)>0$ and $\tilde{D}(\beta)>0$ depending only on $\beta$.

To show \eqref{yu-10-6-4}, we  construct a control $\hat v$ in the following manner: With respect to the above $\beta>0$,
   there is a unique $k:= k(\beta)\in \mathbb{N}$ satisfying $k-1\leq 2\beta<k$.
Then, according to $(iv)$, there exists $T_k>0$ and $D(k)>0$ such that \eqref{yu-11-30-3} holds. This, together with Lemma \ref{yu-lemma-10-5-2}, implies that the
system \eqref{yu-3-2-11-29} is cost-uniformly $e^{-kT_k}$-null controllable in time $T_k$. Therefore,  there exists  $u_0\in L^2(0,T_k;U)$ such that
\begin{equation}\label{yu-9-26-3}
    \|y(T_k;u_0,y_0)\|_X\leq  e^{-k T_k}\|y_0\|_X\leq  e^{-2\beta T_k}\|y_0\|_X\;\;\mbox{and}\;\;
    \|u_0\|_{L^2(0,T_k;U)}\leq D(k)\|y_0\|_X.
\end{equation}
Let $y_1:=y(T_k;u_0,y_0)$. Then by making use of the above cost-uniformly $e^{-kT_k}$-null controllability again, we can find  $u_1\in L^2(0,T_k;U)$ such that
\begin{equation*}\label{yu-9-26-4}
    \|y(T_k;u_1,y_1)\|_X\leq  e^{-2\beta T_k}\|y_1\|_X\;\;\mbox{and}\;\;\|u_1\|_{L^2(0,T_k;U)}\leq D(k)\|y_1\|_X.
\end{equation*}
    Since the system (\ref{yu-3-2-11-29}) is time-invariant, continuing the above process leads to  a sequence $\{u_i\}_{i\in\mathbf{N}}\subset L^2(0,T_k;U)$ such that
\begin{equation*}\label{yu-9-26-1}
    \|y(T_k;u_i,y_i)\|_X\leq e^{-2\beta T_k}\|y_i\|_X\;\;\mbox{and}\;\;
    \|u_i\|_{L^2(0,T_k;U)}\leq D(k)\|y_i\|_X\;\;\mbox{for any}\;\;i\in\mathbb{N},
\end{equation*}
where $y_i:=y(T_k;u_{i-1},y_{i-1})$.
    This, together with (\ref{yu-9-26-3}),  shows that
\begin{equation}\label{yu-9-26-5}
\|y(T_k;u_i,y_i)\|_X\leq (e^{-2\beta T_k})^{i+1}\|y_0\|_X\leq e^{-2\beta(i+1) T_k}\|y_0\|_X\;\;\mbox{for all}\;\;i\in\mathbf{N};
\end{equation}
  \begin{equation}\label{yu-9-26-6}
    \|u_i\|_{L^2(0,T_k;U)}\leq D(k)(e^{-2\beta T_k})^i\|y_0\|_X
    \leq D(k)e^{-2\beta iT_k}\|y_0\|_X\;\;\mbox{for all}\;\;i\in\mathbf{N}.
\end{equation}
Let
\begin{equation}\label{yu-9-26-10}
    \hat{u}(t):=\sum_{i=0}^\infty \chi_{[iT_k, (i+1)T_k)}(t)u_i(t-iT_k),\;\; t\in\mathbb{R}^+
\end{equation}
    and
\begin{equation}\label{yu-9-26-12}
    z_\beta(t):=e^{\beta t}y(t;\hat{u},y_0),\;\; \hat{v}(t):=e^{\beta t}\hat{u}(t),\;\;t\in\mathbb{R}^+.
\end{equation}
\par
    Now, we show that $\hat v$, given in \eqref{yu-9-26-12}, satisfies the second inequality in (\ref{yu-10-6-4}).
Indeed,
 by (\ref{yu-9-26-6}),  (\ref{yu-9-26-10}) and the second equality in (\ref{yu-9-26-12}), we find
\begin{eqnarray*}
    \|\hat{v}\|_{L^2(\mathbb{R}^+;U)}
    \leq \sum_{i=0}^\infty  e^{\beta (i+1)T_k}\|u_i\|_{L^2(0,T_k;U)}
    \leq D(k) \sum_{i=0}^\infty  e^{-\beta (i-1)T_k}\|y_0\|_X
     = \frac{ D(k)e^{\beta T_k}}{1-e^{-\beta T_k}}\|y_0\|_X,
\end{eqnarray*}
which leads to  the second inequality in (\ref{yu-10-6-4})  with $\tilde{D}(\beta)=\frac{ D(k)e^{\beta T_k}}{1-e^{-\beta T_k}}$. (Here, we notice that  $k$ is uniquely determined by $\beta$, and thus the constant $\frac{ D(k)e^{\beta T_k}}{1-e^{-\beta T_k}}$ depends only on $\beta$.)

Finally, we show that $\hat v$, given by \eqref{yu-9-26-12}, satisfies the first inequality in (\ref{yu-10-6-4}). To this end, several observations are given in order. First, it follows from  \eqref{yu-9-26-12} that
   $z_\beta(t)=z_\beta(t;\hat v,y_0)$ for any $t\geq 0$.
   Then, it follows by the equation satisfied by $z_\beta(\cdot;\hat v,y_0)$,
     (\ref{yu-9-26-12}) and (\ref{yu-9-26-10}) that  when $t\in[iT_k,(i+1)T_k]$,
   with $i\in\mathbf{N}$ arbitrarily fixed,
\begin{eqnarray}\label{yu-9-26-15}
    &\;&|\langle z_\beta(t;\hat{v},y_0), \varphi\rangle_X|=|\langle z_\beta(t;\hat{v},y_0), \varphi\rangle_{X_{-1},D(A^*)}|\nonumber\\
    &\leq&|\langle \widetilde{S}_\beta(t-iT_k)z_{\beta}(iT_k;\hat{v},y_0),\varphi\rangle_{X_{-1},D(A^*)}|
    +\left|\left\langle\int_{iT_k}^t\widetilde{S}_\beta(t-s)B\hat{v}(s)ds,\varphi\right
    \rangle_{X_{-1},D(A^*)}\right|
    \nonumber\\
    &=&\left|\langle e^{\beta(t-iT_k)}S(t-iT_k)z_\beta(iT_k;\hat{v},y_0),\varphi\rangle_X\right|
    +\left|\int_{iT_k}^t\langle\widetilde{S}_\beta(t-s)B\hat{v}(s),\varphi
    \rangle_{X_{-1},D(A^*)}ds\right|\nonumber\\
    &\leq&\sup_{t\in[0,T_k]}\|S(t)\|_{\mathcal{L}(X)}e^{\beta T_k}
\|z_\beta(iT_k;\hat{v},y_0)\|_X\|\varphi\|_X\nonumber\\
&\;&+e^{\beta (i+1)T_k}\left|\int_{iT_k}^t\langle\widetilde{S}(t-s)B\hat{u}(s),\varphi\rangle_{X_{-1},D(A^*)}
ds\right| \nonumber\\
         &\leq& \sup_{t\in[0,T_k]}\|S(t)\|_{\mathcal{L}(X)}e^{\beta (i+1)T_k}\|y(iT_k;\hat{u},y_0)\|_X\|\varphi\|_X\nonumber\\
    &&+e^{\beta (i+1)T_k}\left|\int_{0}^{t-iT_k}\langle\widetilde{S}(t-iT_k-s)Bu_i(s),
    \varphi\rangle_{X_{-1},D(A^*)}ds\right|\;\;\mbox{for any}\;\;\varphi\in D(A^*).
\end{eqnarray}
   Here, we used the fact $\widetilde{S}(t)=S(t)$ $(t\geq 0)$ on $X$. Second,  we get from  \eqref{yu-9-26-10} and the construction of $\{y_i\}_{i\in\mathbf{N}}$ that
for each $i\in \mathbb{N}$,
$y(iT_k;\hat{u},y_0)=y(T_k; u_{i-1}, y_{i-1})$.
This, along with  \eqref{yu-9-26-5}, leads to
\begin{eqnarray}\label{Liu-11-11-1}
    \|y(iT_k;\hat{u},y_0)\|_X\leq e^{-2\beta iT_k}\|y_0\|_X,\;\;\mbox{when}\;\;i\in \mathbb{N}.
\end{eqnarray}
Third, it follows  by \eqref{yu-10-4-2} and \eqref{yu-9-26-6} that for each $t\in [iT_k,(i+1)T_k]$
(with $i\in\mathbf{N}$ arbitrarily fixed) and for each $\varphi\in D(A^*)$,
\begin{eqnarray}\label{Liu-11-11-2}
    \left|\int_0^{t-iT_k}\langle \widetilde{S}(t-iT_k-s)Bu_i(s),\varphi\rangle_{X_{-1},D(A^*)}ds\right|=
    \left|\int_0^{t-iT_k}\langle u_i(s),B^*S^*(t-iT_k-s)\varphi\rangle_Uds\right|\nonumber\\
    \leq C(T_k)\|u_i\|_{L^2(0,T_k;U)}\|\varphi\|_X\leq C(T_k)D(k)e^{-2\beta iT_k}\|y_0\|_X\|\varphi\|_X.
\end{eqnarray}
 Since $D(A^*)$ is dense in $X$, it follows \eqref{yu-9-26-15}, \eqref{Liu-11-11-1} and \eqref{Liu-11-11-2} that
\begin{eqnarray*}
   \|z_\beta(t;\hat{v},y_0)\|_{L^2(\mathbb{R}^+;X)}&\leq&\sum_{i=0}^\infty \|z_\beta(\cdot;\hat{v},y_0)\|_{L^2(iT_k,(i+1)T_k;X)}\leq T_k\sum_{i=0}^\infty \sup_{t\in[iT_k,(i+1)T_k]}\|z_\beta(t;\hat{v},y_0)\|_X\nonumber\\
    &\leq& T_ke^{\beta T_k}\left(\sup_{t\in[0,T_k]}\|S(t)\|_{\mathcal{L}(X)}+C(T_k)D(k)\right)\sum_{i=0}^\infty e^{-\beta iT_k}\|y_0\|_X,
\end{eqnarray*}
which leads to  the first inequality in (\ref{yu-10-6-4}) with $\tilde{C}(\beta):=T_ke^{\beta T_k}(\sup_{t\in[0,T_k]}\|S(t)\|_{\mathcal{L}(X)}+C(T_k)D(k))/(1-e^{-\beta T_k})$.
 The reason why $D(A^*)$ is dense in $X$ is that $A^*$ is the generator of the adjoint  semigroup $S(t)^*$ $(t\geq 0)$ (see \cite[Chapter 1, Corollary 10.6]{Pazy})).

\vskip 5pt
\noindent \emph{Sub-step 4.2. We prove the desired complete stabilizability.}
\vskip 5pt

One can directly check that $[A+\beta I,B]$ still satisfies the assumptions
  $(\widetilde{H_1})$-$(\widetilde{H_3})$. Meanwhile, by Sub-step 4.1, we have $\mathcal{U}^\beta_{ad}(y_0)\neq \emptyset$
  for each $y_0\in X$.
  Thus,
  by Lemma \ref{yu-lemma-10-5-1}
    (where  $\mathbf{(LQ)}_{y_0}$ is replaced by  $\mathbf{(LQ)}_{y_0}^\beta$), there is    a unique solution $v_{y_0}^*$ to $\mathbf{(LQ)}_{y_0}^\beta$; a
    self-adjoint and non-negative definite operator $P:=P(\beta)\in \mathcal{L}(X)$;  a $C_0$-semigroup $S_P^\beta(t)$ $(t\geq 0)$ on $X$, with the generator $A_P^\beta: D(A_P^\beta)\subset X\to X$,
    such that  the following conclusions are true:
\begin{enumerate}
  \item [$(e_1)$]  It holds that $P\in\mathcal{L}(D(A^\beta_P);D(A^*))$ and $B^*P\in  \mathcal{L}(D(A_P^\beta);U)$;
\item [$(e_2)$] For any $y_0\in D(A_P^\beta)$, $A_P^\beta y_0=(\widetilde{A}+\beta I-BB^*P)y_0$;
  \item[$(e_3)$] If $y_0\in D(A_P^\beta)$, then $v_{y_0}^*(t)=-B^*PS_P^\beta(t)y_0$ for a.e. $t>0$;
  \item [$(e_4)$] The semigroup $S_P^\beta(t)$\; $(t\geq 0)$ is exponentially stable on $X$.
\end{enumerate}
\par
    Let $K_\beta:=-B^*P$  and $\Phi_\beta(t):=e^{-\beta t}S_P^\beta(t),\; t\geq 0$.
    Then one can directly check  that $\Phi_\beta(t)$ $(t\geq 0)$ is a $C_0$-semigroup on $X$ generated by $\Lambda_\beta:=A_P^\beta-\beta I$,
     with $D(\Lambda_\beta)=D(A_P^\beta)$. Moreover, by $(e_1)$,  $K_\beta\in\mathcal{L}(D(\Lambda_\beta);U)$.
\par
 Next, we will check that the above $K_\beta$ and $\Phi_\beta(t)$ ($t\geq 0$) satisfy $(a')$-$(c')$ in Definition \ref{yu-definition} one by one.
 First, it follows from  $(e_2)$
       that   $\Lambda_\beta y_0=(A_P^\beta-\beta I)y_0=(\widetilde{A}-BB^*P)y_0$  for any $y_0\in D(\Lambda_\beta)(=D(A_P^\beta))$,
       which leads to
       $(a')$ in Definition \ref{yu-definition}. Second,
       we use  $(e_4)$ to find $C:=C(\beta)>0$ such that $\|S_P^\beta(t)\|_{\mathcal{L}(X)}\leq C$ for all
       $t>0$. Thus we have
      \begin{equation*}\label{Liu-11-22-01}
 \|\Phi_\beta(t)\|_{\mathcal{L}(X)}=e^{-\beta t}\|S_P^\beta(t)\|_{\mathcal{L}(X)}\leq Ce^{-\beta t}, \;\;\mbox{when}\;\; t>0,
\end{equation*}
which leads to $(b')$  in Definition \ref{yu-definition}.
Finally, we use  \eqref{yu-10-6-4}
to find $\hat{D}(\beta)>0$ such that
\begin{equation*}\label{Liu-11-12-01}
    \|v^*_{y_0}\|_{L^2(\mathbb{R}^+; U)}\leq \sqrt{J^\beta(v^*_{y_0};y_0)}=\sqrt{\inf_{v\in L^2(\mathbb{R}^+;U)}J^\beta(v;y_0)}\leq \hat{D}(\beta)\|y_0\|_{X}.
\end{equation*}
This, together with $(e_3)$, implies that when $y_0\in D(\Lambda_\beta)(=D(A_P^\beta))$,
\begin{equation*}
  \|K_\beta \Phi_\beta(\cdot)y_0\|_{L^2(\mathbb{R}^+; U)}=\|-e^{-\beta \cdot}B^*PS_P^\beta(\cdot)y_0\|_{L^2(\mathbb{R}^+; U)}\leq \|v^*_{y_0}\|_{L^2(\mathbb{R}^+; U)}\leq \hat{D}(\beta)\|y_0\|_{X},
  \end{equation*}
which leads to  $(c')$  in Definition \ref{yu-definition}.

Now, since $\beta>0$ was arbitrarily taken, the above checked $(a')$-$(c')$, along with Definition \ref{yu-definition}, yields that the system \eqref{yu-3-2-11-29}
is completely stabilizable.
\end{proof}

    The following proposition may have independent interest.
\begin{proposition}\label{corollary-1-3-1}
     Suppose that  $(\widetilde{H_1})$-$(\widetilde{H_3})$ hold. Then the following  statements  are equivalent:
\begin{enumerate}
  \item [$(i)$] The set $\mathcal{U}_{ad}(y_0)$ defined by (\ref{yu-22-1-3-100}) is nonempty for any $y_0\in X$.
  \item [$(ii)$] The system (\ref{yu-3-2-11-29}) is exponentially stabilizable (in the sense of
    $(i)$ in Definition \ref{yu-definition}).

   \item [$(iii)$]  For any $y_0\in X$, $V(y_0):=\inf_{u\in L^2(\mathbb{R}^+;U)}J(u;y_0)<+\infty$,
   where $J(u;y_0)$ is given by \eqref{yu-10-4-3}.

\end{enumerate}
\end{proposition}
\begin{proof}
First of all, it is well known that $(i)\Leftrightarrow (iii)$.

    We next prove $(i)\Rightarrow (ii)$. Suppose $(i)$ holds. Then it follows from Lemma
    \ref{yu-lemma-10-5-1} that for each $y_0\in X$, the problem $\textbf{(LQ)}_{y_0}$ (see \eqref{yu-22-1-4-1}) has a unique solution $u_{y_0}^*$,  moreover  there exists a  self-adjoint and non-negative operator  $P\in \mathcal{L}(X)$ and a $C_0$-semigroup $S_P(t)$ ($t\geq 0$) on $X$, with its generator $A_P:D(A_P)\subset X\to X$,
      such that $(i)$-$(iv)$ in Lemma \ref{yu-lemma-10-5-1} are true. Let
      $\Phi(t):=S_P(t)$ ($t\geq 0$). (Its the generator  is
$\Lambda:=A_P$.) Let $K:=-B^*P$. Then by $(i)$ in Lemma \ref{yu-lemma-10-5-1}, we have
$K\in \mathcal{L}(D(\Lambda);U)(=\mathcal{L}(D(A_P);U))$.

 Now we  show that the above $\Phi(t)$ and $K$ satisfy  the conditions $(a)$, $(b)$ and $(c)$ in  Definition \ref{yu-definition}.
 Indeed,  $(a)$ and $(b)$ follow from  $(ii)$ and $(iv)$ in Lemma \ref{yu-lemma-10-5-1}, respectively.
 While the condition $(c)$ can be deduced from our assumptions and $(iii)$ in Lemma \ref{yu-lemma-10-5-1}.
    Indeed, by our assumptions and $(ii)$ in \cite[Theorem 2.2]{Flandoli-Lasiecka-Triggiani}, there exists a constant $C>0$ such that
\begin{eqnarray*}
    \|u^*_x\|_{L^2(\mathbb{R}^+;U)}
    \leq \sqrt{\inf_{u\in L^2(\mathbb{R}^+;U)}J(u;x)}\leq C\|x\|_X,\;\;\mbox{when}\;\;x\in X,
\end{eqnarray*}
    where  $J(u;x)$ is defined by (\ref{yu-10-4-3}) (with $y_0=x$).  Thus, by $(iii)$ in Lemma \ref{yu-lemma-10-5-1}, we get $(c)$. Hence
    $(ii)$ is true.
\par
    Finally, we show $(ii)\Rightarrow (i)$. We suppose $(ii)$ holds, i.e., there exists  a $C_0$-semigroup $\Phi(t)$ ($t\geq 0$) on $X$, with  its generator $\Lambda : D(\Lambda)\subset X\to X$, and  $K\in\mathcal{L}(D(\Lambda);U)$ such that
    the conditions $(a)$, $(b)$ and $(c)$ are true. Let $\alpha>0$ be given in $(b)$.
    By a very similar way used in  Step 1 in the proof of Theorem  \ref{yu-theorem-10-5-1}, we can find positive constants $C(\alpha)$ and $D(\alpha)$ such that
    (\ref{yu-11-30-1}) holds for the aforementioned $\alpha$. Thus, there exists
    $T_0>0$ and  $\delta\in(0,1)$ such that
\begin{equation*}
    \|S(T_0)^*\varphi\|_{X}\leq D(\alpha)\|B^*S(T_0-\cdot)^*\varphi\|_{L^2(0,T_0;U)}+\delta\|\varphi\|_{X},\;\;\mbox{when}\;\;\varphi\in D(A^*).
\end{equation*}
      This, together with Lemma
     \ref{yu-lemma-10-5-2}, yields that the  system \eqref{yu-3-2-11-29} is cost-uniformly $\delta$-null controllable at time $T_0>0$. Then, by the very similar way used in Sub-step 4.1
     in the proof of Theorem  \ref{yu-theorem-10-5-1} (or in the proof of Lemma 31 in \cite{Trelat-Wang-Xu}), we can conclude that
     $\mathcal{U}_{ad}(y_0)\neq \emptyset$ for any $y_0\in X$, i.e.,  $(i)$ is true.
\end{proof}

\subsection{Periodic feedback stabilization}\label{yu-per-1}

This subsection studies the periodic complete stabilization for  the periodic  system $[A(\cdot),B(\cdot)]$, i.e.,
\begin{equation}\label{contsyst}
y'(t) = A(t)y(t)+B(t)u(t),\;\; t\in \mathbb{R}^+,
\end{equation}
under  the following hypotheses:
\begin{enumerate}
\item[($\widehat{H_1}$)]
For a.e. $t\in \mathbb{R}^+$,
$A(t):=A+D(t)$, where the operator $A$ generates a  $C_0$-semigroup $S(t)$ ($t\geq 0$) on  $X$;  the operator-valued function $D(\cdot)$ belongs to
$L^1_{loc}(\mathbb{R}^+;{\cal L}(X))$ and is $\mathbb{T}$-periodic ($\mathbb{T}>0$), i.e., $D(t+\mathbb{T})=D(t)$ for a.e. $t\in \mathbb{R}^+$.
\item[($\widehat{H_2}$)]  The operator-valued function $B(\cdot)$ belongs to $L^\infty(\mathbb{R}^+;{\cal L}(U;X))$ and is $\mathbb{T}$-periodic, i.e., $B(t+\mathbb{T})=B(t)$ for a.e. $t\in \mathbb{R}^+$.
 \end{enumerate}
By \cite[Chapter 1, Proposition 1.2]{WX2}, we have what follows: First,
   $A(\cdot)$ generates a unique  $\mathbb{T}$-periodic evolution $\Phi(\cdot,\cdot)$
   (i.e., $\Phi(t+\mathbb{T}, s+\mathbb{T})=\Phi(t,s)$ for all $0\leq s\leq t$) on $X$;
   Second, for each $\mathbb{T}$-periodic feedback operator $K\in L^\infty(\mathbb{R}^+;{\cal L}(X;U))$
    (i.e., $K(\mathbb{T}+t)=K(t)$ for a.e. $t\in \mathbb{R}^+$),
 $A_{K}(\cdot):= A(\cdot) +B(\cdot)K(\cdot)$
generates a unique  $\mathbb{T}$-periodic evolution  $\Phi_{K}(\cdot,\cdot)$;
Third, when
 $u\in L^2(\mathbb{R}^+;U)$ and $y_0\in X$,
  \begin{equation*}
 y(t;u,y_0)=\Phi(t,0)y_0+\int_0^t\Phi(t,s)Bu(s)ds\;\;\mbox{and}\;\; y_K(t;y_0)=\Phi_{K}(t,0)y_0\;\mbox{for all}\;\;t\geq 0,
 \end{equation*}
 where $y(\cdot;u,y_0)$ is the solution
 to (\ref{contsyst}) with the initial condition: $y(0)=y_0$, while
 $y_K(\cdot;z)$ is the solution to the equation:
$y'(t)=[A(t)+B(t)K(t)]y(t),\; t\geq 0;\;\; y(0)=y_0$.

 To present of the main result of this subsection, we need the following definitions:
\begin{definition}\label{definition 3.7,11-24}
\begin{enumerate}
\item[(i)] The system (\ref{contsyst}) is said to be periodically  stabilizable,  if  there exists $C>0$,
$\alpha>0$ and a $\mathbb{T}$-periodic feedback operator
$K(\cdot)\in L^\infty(\mathbb{R}^+;{\cal L}(X;U))$  such that
$\|\Phi_{K}(t,0)\|_{\mathcal{L}(X)}\leq C e^{-\alpha t}$ for all $t\geq 0$.

\item[(ii)] The system (\ref{contsyst}) is said to be periodically completely stabilizable,  if for any $\alpha\in\mathbb{R}^+$, there exists $C:=C(\alpha)>0$ and a $\mathbb{T}$-periodic feedback operator
$K(\cdot):=K_\alpha(\cdot)\in L^\infty(\mathbb{R}^+;{\cal L}(X;U))$ such that
$\|\Phi_{K}(t, 0)\|_{\mathcal{L}(X)}\leq C e^{-\alpha t}$ for all $t\geq 0$.
\end{enumerate}
\end{definition}
The main result of this subsection is as follows:
\begin{theorem}\label{maintheorem1}
Suppose that $(\widehat{H_1})$ and $(\widehat{H_2})$ are true. Then the following statements are equivalent:
\begin{enumerate}
\item[$(i)$] The  system (\ref{contsyst}) is periodically completely stabilizable.
\item[$(ii)$] For any $k\in\mathbb{N}$, there exists  $n_k\in\mathbb{N}$ and $ C(k)>0$  such that
\begin{equation*}
\|\Phi(n_k\mathbb{T},0)^*\psi\|_X\leq
	C(k)\|B(\cdot)^*\Phi(n_k \mathbb{T},\cdot)^*\psi\|_{L^2(0,n_k\mathbb{T};U)}+e^{-kn_k\mathbb{T}}\|\psi\|_X\;\;\mbox{for any}\;\;\psi\in X.
\end{equation*}
\end{enumerate}
\end{theorem}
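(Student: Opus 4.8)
The plan is to mirror the strategy of Theorem \ref{yu-theorem-3-4-1}, replacing the semigroup $S(t)$ by the $\mathbb{T}$-periodic evolution $\Phi(t,s)$ and exploiting the principle that $[A(\cdot),B(\cdot)]$ is periodically completely stabilizable if and only if, for every $\mu>0$, the shifted system $[A(\cdot)+\mu I,B(\cdot)]$ is periodically exponentially stabilizable. The crucial scaling relation is that if $\Phi_\mu(\cdot,\cdot)$ denotes the evolution generated by $A(\cdot)+\mu I$, then $\Phi_\mu(t,s)=e^{\mu(t-s)}\Phi(t,s)$, and likewise $\Phi_{\mu,K}(t,s)=e^{\mu(t-s)}\Phi_K(t,s)$ for any $\mathbb{T}$-periodic feedback $K(\cdot)$; in particular a feedback that makes the shifted evolution exponentially stable forces $\|\Phi_K(t,0)\|_{\mathcal{L}(X)}\le Ce^{-\mu t}$ for the original system. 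I would also invoke the periodic counterpart of Lemma \ref{10.23Liulem1}, as developed in \cite{WX1,WX2,Xu}: periodic exponential stabilizability of a periodic system is equivalent to a single weak observability inequality over some window $[0,n\mathbb{T}]$ with a slack constant $\gamma\in(0,1)$. Since $B(\cdot)$ is bounded here, no admissibility issues arise and the duality estimates proceed exactly as in the bounded case.

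For $(i)\Rightarrow(ii)$, I would fix $k\in\mathbb{N}^+$ and apply periodic complete stabilizability with rate $\alpha=k+1$ to obtain a $\mathbb{T}$-periodic feedback $K(\cdot)$ and a constant $C=C(k+1)$ with $\|\Phi_K(t,0)\|_{\mathcal{L}(X)}\le Ce^{-(k+1)t}$. Setting $u_{y_0}(t):=K(t)\Phi_K(t,0)y_0$, the exponential decay together with the boundedness of $K(\cdot)$ and $B(\cdot)$ yields an $L^2(\mathbb{R}^+;U)$ bound $\|u_{y_0}\|_{L^2}\le \widetilde{D}(k)\|y_0\|_X$. Writing the variation-of-constants identity $\Phi_K(t,0)y_0=\Phi(t,0)y_0+\int_0^t\Phi(t,s)B(s)u_{y_0}(s)\,ds$ at $t=n\mathbb{T}$, pairing with $\psi\in X$, and estimating as in Step 1 of Theorem \ref{yu-theorem-3-4-1}, I obtain, for every $n$,
\begin{equation*}
\|\Phi(n\mathbb{T},0)^*\psi\|_X\le \widetilde{D}(k)\,\|B(\cdot)^*\Phi(n\mathbb{T},\cdot)^*\psi\|_{L^2(0,n\mathbb{T};U)}+Ce^{-(k+1)n\mathbb{T}}\|\psi\|_X.
\end{equation*}
Choosing $n_k$ large enough that $C\le e^{n_k\mathbb{T}}$ absorbs the constant into the exponential, turning $Ce^{-(k+1)n_k\mathbb{T}}$ into at most $e^{-kn_k\mathbb{T}}$; this gives (ii) with $C(k):=\widetilde{D}(k)$.

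For $(ii)\Rightarrow(i)$, I would fix $\mu>0$ and pick $k\in\mathbb{N}^+$ with $k-1\le\mu<k$. Applying (ii) at this $k$ supplies $n_k$ and the inequality with factor $e^{-kn_k\mathbb{T}}$. Multiplying through by $e^{\mu n_k\mathbb{T}}$ and using the adjoint scalings $\Phi_\mu(n_k\mathbb{T},0)^*=e^{\mu n_k\mathbb{T}}\Phi(n_k\mathbb{T},0)^*$ and $B(s)^*\Phi_\mu(n_k\mathbb{T},s)^*=e^{\mu(n_k\mathbb{T}-s)}B(s)^*\Phi(n_k\mathbb{T},s)^*$, together with the pointwise bound $e^{\mu s}\le e^{\mu n_k\mathbb{T}}$ on $[0,n_k\mathbb{T}]$, converts the inequality into a weak observability inequality for the shifted adjoint evolution, with control-term coefficient $C(k)e^{\mu n_k\mathbb{T}}$ and slack constant $e^{-(k-\mu)n_k\mathbb{T}}\in(0,1)$. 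The periodic counterpart of Lemma \ref{10.23Liulem1} then yields that $[A(\cdot)+\mu I,B(\cdot)]$ is periodically exponentially stabilizable by some $\mathbb{T}$-periodic $K_\mu(\cdot)$, and transferring the feedback back through $\Phi_{K_\mu}(t,0)=e^{-\mu t}\Phi_{\mu,K_\mu}(t,0)$ gives $\|\Phi_{K_\mu}(t,0)\|_{\mathcal{L}(X)}\le C(\mu)e^{-\mu t}$. Since $\mu>0$ is arbitrary, (i) follows.

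The main obstacle I expect lies in the $(ii)\Rightarrow(i)$ direction, namely the reliance on a periodic analog of Lemma \ref{10.23Liulem1}: unlike the time-invariant case, one must produce a genuinely $\mathbb{T}$-periodic and essentially bounded feedback, which is subtler than merely building an $L^2$ open-loop control by iterating cost-uniform approximate null controllability over successive periods. Passing from the weak observability inequality to such a periodic feedback requires the periodic LQ/Riccati machinery of \cite{WX1,WX2,Xu}, and care is needed to verify that the shifted system $[A(\cdot)+\mu I,B(\cdot)]$ still satisfies $(\widehat{H}_1)$ and $(\widehat{H}_2)$ so that this machinery applies, and that the monodromy and periodicity structure is respected at every step.
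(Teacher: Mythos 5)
Your proposal is correct and follows essentially the same route as the paper: the same scaling relations $\Phi^\mu(t,s)=e^{\mu(t-s)}\Phi(t,s)$, $\Phi^\mu_K(t,s)=e^{\mu(t-s)}\Phi_K(t,s)$, and the same appeal to the periodic weak-observability criterion of \cite{Xu} (Lemma \ref{10-19.lemma1}) to pass from the inequality for the shifted system $[A(\cdot)+\mu I,B(\cdot)]$ back to a genuinely $\mathbb{T}$-periodic stabilizing feedback. The only (harmless) deviation is in $(i)\Rightarrow(ii)$, where you run the direct variation-of-constants duality estimate as in Theorem \ref{yu-theorem-3-4-1} and then enlarge $n_k$ to absorb the constant, whereas the paper instead invokes Lemma \ref{10-19.lemma1} a second time to produce the inequality for $\Phi^k$ and rescales; both arguments are valid.
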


To prove Theorem \ref{maintheorem1}, we need the next Lemma \ref{10-19.lemma1}, which  is quoted from
 \cite[ Theorem 1.1]{Xu}.

\begin{lemma}\label{10-19.lemma1}
Suppose that $(\widehat{H_1})$ and $(\widehat{H_2})$ hold. Let $\mu\geq 0$. Let $\Phi^\mu(\cdot,\cdot)$ be the $\mathbb{T}$-periodic evolution generated by
	$A(\cdot) +\mu I$. Then the following statements are equivalent:
\begin{enumerate}
\item[$(i)$] The following system  is periodically stabilizable:
\begin{equation}\label{Liu-12-7-01}
y'(t) = (A(t)+\mu I)y(t)+B(t)u(t),\;\; t\in \mathbb{R}^+.
\end{equation}
\item[$(ii)$]  There exists $ \delta\in(0,1)$,  $n\in\mathbb{N}$  and $C(n)>0$ such that
\begin{equation}\label{Liu-12-7-02}
\|\Phi^\mu(n\mathbb{T},0)^*\psi\|_X\leq
	C(n)\|B(\cdot)^*\Phi^\mu(n \mathbb{T},\cdot)^*\psi\|_{L^2(0,n\mathbb{T};U)}+\delta\|\psi\|_X\;\;\mbox{for any}\;\;\psi\in X.
\end{equation}
\end{enumerate}
\end{lemma}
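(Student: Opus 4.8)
The plan is to establish the two implications separately, treating this statement as the $\mathbb{T}$-periodic counterpart of Lemma \ref{10.23Liulem1}: throughout, the constant shifted semigroup $S_\mu(\cdot)$ is replaced by the periodic evolution $\Phi^\mu(\cdot,\cdot)$, and the single observation time $T$ is replaced by a multiple $n\mathbb{T}$ of the period. For $(i)\Rightarrow(ii)$ I would argue exactly as in Step~1 of the proof of Theorem \ref{yu-theorem-3-4-1}. Assuming \eqref{Liu-12-7-01} is periodically stabilizable, fix a $\mathbb{T}$-periodic feedback $K(\cdot)\in L^\infty(\mathbb{R}^+;\mathcal{L}(X;U))$ and constants $C,\alpha>0$ with $\|\Phi^\mu_K(t,0)\|_{\mathcal{L}(X)}\leq Ce^{-\alpha t}$, where $\Phi^\mu_K(\cdot,\cdot)$ is the closed-loop evolution generated by $A(\cdot)+\mu I+B(\cdot)K(\cdot)$. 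For $y_0\in X$ I take the feedback control $u(t):=K(t)\Phi^\mu_K(t,0)y_0$, whose decay gives $\|u\|_{L^2(0,n\mathbb{T};U)}\leq \|K\|_{L^\infty}C(2\alpha)^{-1/2}\|y_0\|_X$. Inserting $u$ into the variation-of-constants identity
\[
\Phi^\mu_K(n\mathbb{T},0)y_0=\Phi^\mu(n\mathbb{T},0)y_0+\int_0^{n\mathbb{T}}\Phi^\mu(n\mathbb{T},s)B(s)u(s)\,ds,
\]
pairing with $\psi$, rearranging and taking the supremum over $\|y_0\|_X\leq 1$ yields \eqref{Liu-12-7-02} with $C(n):=\|K\|_{L^\infty}C(2\alpha)^{-1/2}$ and $\delta:=Ce^{-\alpha n\mathbb{T}}$, which is $<1$ once $n$ is chosen large enough.

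The substantive direction is $(ii)\Rightarrow(i)$, which I would carry out in three stages paralleling Sub-step~4.1 and Sub-step~4.2 of Theorem \ref{yu-theorem-10-5-1}. First, by the observability--controllability duality (the $\mathbb{T}$-periodic analogue of Lemma \ref{yu-lemma-10-5-2}), inequality \eqref{Liu-12-7-02} is equivalent to the cost-uniform $\delta$-null controllability of \eqref{Liu-12-7-01} over $[0,n\mathbb{T}]$: for each $y_0$ there is $u\in L^2(0,n\mathbb{T};U)$ with $\|y(n\mathbb{T};u,y_0)\|_X\leq\delta\|y_0\|_X$ and $\|u\|_{L^2(0,n\mathbb{T};U)}\leq C(n)\|y_0\|_X$, where $y$ is the trajectory of \eqref{Liu-12-7-01}. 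Second, since \eqref{Liu-12-7-01} is $\mathbb{T}$-periodic, hence $n\mathbb{T}$-periodic, I iterate this controllability on the successive windows $[jn\mathbb{T},(j+1)n\mathbb{T}]$: concatenating the controls produces a global $\hat u\in L^2(\mathbb{R}^+;U)$ with $\|y(jn\mathbb{T};\hat u,y_0)\|_X\leq\delta^j\|y_0\|_X$ and geometrically summable window norms, so that $\|\hat u\|_{L^2(\mathbb{R}^+;U)}+\|y(\cdot;\hat u,y_0)\|_{L^2(\mathbb{R}^+;X)}\leq C'\|y_0\|_X$ after using the boundedness of $B(\cdot)$ on each window. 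Third, this shows that the admissible set of the infinite-horizon LQ problem for \eqref{Liu-12-7-01} is nonempty with value $\leq C''\|y_0\|_X^2$ for every $y_0$; invoking the $\mathbb{T}$-periodic LQ/Riccati synthesis (the periodic analogue of Lemma \ref{yu-lemma-10-5-1}) produces a $\mathbb{T}$-periodic, self-adjoint, non-negative $P(\cdot)$ whose associated feedback $K(t):=-B(t)^*P(t)$ is $\mathbb{T}$-periodic and bounded and whose closed loop decays exponentially, which is precisely periodic stabilizability.

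The main obstacle is the last stage: the iteration only delivers an open-loop control with exponential decay, whereas $(i)$ demands a genuine bounded $\mathbb{T}$-periodic feedback. Converting the former into the latter requires the full periodic Riccati theory --- existence, $\mathbb{T}$-periodicity and regularity of $P(\cdot)$, together with exponential stability of the synthesized closed loop --- and this rests on the finiteness of the LQ value guaranteed by the iteration. By contrast, the duality of the first stage and the summability bookkeeping of the second are routine once the periodicity $\Phi^\mu(t+n\mathbb{T},s+n\mathbb{T})=\Phi^\mu(t,s)$ is exploited; the periodic LQ machinery is where the genuine work lies, which is why the result is attributed to \cite{Xu}.
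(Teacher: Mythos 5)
The paper does not prove this lemma at all: it is quoted from \cite[Theorem 1.1]{Xu}, so there is no in-paper proof to measure your argument against, and your reconstruction has to be judged on its own. So judged, it is sound and consistent with how the paper proves its analogous results. Your direction $(i)\Rightarrow(ii)$ is complete and is exactly the periodic transcription of Step 1 of the proof of Theorem \ref{yu-theorem-3-4-1} (feedback control, variation of constants, duality, then choose $n$ so that $Ce^{-\alpha n\mathbb{T}}<1$). Your direction $(ii)\Rightarrow(i)$ --- duality to cost-uniform $\delta$-null controllability, iteration over the windows $[jn\mathbb{T},(j+1)n\mathbb{T}]$, then periodic LQ/Riccati synthesis --- parallels Step 4 of the proof of Theorem \ref{yu-theorem-10-5-1} and is very plausibly the same circle of ideas as in \cite{Xu} (compare also \cite{WX1,WX2}). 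Two points deserve care if you were to write this out in full. First, to define a $\mathbb{T}$-periodic Riccati operator $P(\cdot)$ you need the LQ value to be finite, with a uniform quadratic bound, for \emph{every} initial time $s\in[0,\mathbb{T})$, not just for $s=0$; this is routine (run the zero control up to the next multiple of $n\mathbb{T}$, at a cost controlled by the boundedness of the evolution on intervals of length $n\mathbb{T}$, then start your iteration), but it must be said, since your iteration as stated only covers starting times $jn\mathbb{T}$. Second, exponential stability of the synthesized closed loop does not come for free from finiteness of the value: one needs that $\int_0^\infty\|y_K(t)\|_X^2\,dt\leq C\|y_0\|_X^2$ for all $y_0$, together with uniform boundedness of $\Phi_K(t,s)$ for $t-s$ bounded, implies uniform exponential decay (a Datko-type theorem for evolution families); here the identity state cost plays the role of the trivially satisfied detectability condition, exactly as in Remark \ref{yu-remark-12-1-3}(ii). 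With those two points made explicit, the only ingredient you use as a black box is the periodic LQ theory itself, which is precisely the content the paper outsources by citing \cite{Xu}, so your proposal is a legitimate proof outline rather than a circular one.
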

\begin{remark}\label{remark3.11,12.8}
 Given $n\in \mathbb{N}$ and $\psi\in X$, we have that
$\Phi^\mu(n\mathbb{T}, t)^*\psi=\varphi_n(t;\psi)$
for each $ t\in[0, n\mathbb{T}]$, where $\varphi_n(\cdot;\psi)$ is the solution to the  equation:
$\varphi'_n(t)=-(A+\mu I)(t)^*\varphi_n(t)\; t\in[0, n\mathbb{T}]$; $\;\varphi_n(n\mathbb{T})=\psi$.
In \cite[ Theorem 1.1]{Xu}, \eqref{Liu-12-7-02} is expressed in terms of $\varphi_n(\cdot;\psi)$.
\end{remark}
Now, we are in the position to prove Theorem \ref{maintheorem1}.

\begin{proof}[The proof of Theorem \ref{maintheorem1}]
First of all,  when $\mu\geq 0$ and
$K(\cdot)\in L^\infty(\mathbb{R}^+;{\cal L}(X,U))$ is $\mathbb{T}$-periodic, we have
\begin{equation}\label{12.6-1}
	   \Phi^\mu(t,s)=e^{\mu (t-s)} \Phi(t,s)\;\;\mbox{and}\;\;
\Phi_K^\mu(t,s)=e^{\mu (t-s)}\Phi_K(t,s)\;\;\mbox{for all}\;\; 0\leq s\leq t,
	\end{equation}
where $\Phi_K^\mu(\cdot,\cdot)$ is the $\mathbb{T}$-periodic evolution generated by $A(\cdot)+\mu I+B(\cdot)K(\cdot)$.
We now  organize the rest of the  proof in  two steps.
\vskip 5pt

\noindent {\it Step 1. We show $(i)\Rightarrow (ii)$.}
\vskip 5pt
Suppose $(i)$ is true.
 Arbitrarily fix $k\in\mathbf{N}$.
Then according to Definition \ref{definition 3.7,11-24},
  there exists $C_{k}>0$ and
 a $\mathbb{T}$-periodic feedback operator  $K(\cdot):=K_{k}(\cdot)\in L^\infty(\mathbb{R}^+;{\cal L}(X;U))$  such that
$\|\Phi_{K}(t, 0)\|_{\mathcal{L}(X)}\leq C_{k} e^{-(k+1)  t}$ for all $t\geq 0$,
which, along with the second equality in \eqref{12.6-1},   implies that
$\|\Phi_{K}^k(t, 0)\|_{\mathcal{L}(X)}\leq C_k e^{-t}$ for all $t\geq 0$.
This, along with  Definition \ref{definition 3.7,11-24}, leads to the periodic stabilizability of
     the system \eqref{Liu-12-7-01} (where $\mu=k$).
Then according to  Lemma \ref{10-19.lemma1},
there exists $ \delta_k\in(0,1)$,  $n_k\in\mathbb{N}$  and $C(k)>0$ such that
\begin{equation*}
	\|\Phi^k(n_k\mathbb{T},0)^*\psi\|_X\leq
	C(k)\|B(\cdot)^*\Phi^k(n_k \mathbb{T}, \cdot)^*\psi\|_{L^2(0,n_k\mathbb{T};U)}+\delta_k\|\psi\|_X\;\;\mbox{for any}\;\;\psi\in X.
\end{equation*}
	This, together with the first equality in \eqref{12.6-1}, implies that
\begin{eqnarray*}
	\|\Phi(n_k\mathbb{T},0)^*\psi\|_X&\leq&
	C(k)\|e^{-k\cdot}B(\cdot)^*\Phi(n_k \mathbb{T}, \cdot)^*\psi\|_{L^2(0,n_k\mathbb{T};U)}+\delta_ke^{-kn_k\mathbb{T}}\|\psi\|_X\\
	&\leq& C(k)\|B(\cdot)^*\Phi(n_k \mathbb{T}, \cdot)^*\psi\|_{L^2(0,n_k\mathbb{T};U)}+e^{-kn_k\mathbb{T}}\|\psi\|_X
	\;\;\mbox{for any}\;\;\psi\in X,
\end{eqnarray*}	
	which leads to $(ii)$.
	\vskip 5pt
	
	\noindent {\it Step 2. We show $(ii)\Rightarrow (i)$.}
	\vskip 5pt
	Suppose that $(ii)$ is true. Arbitrarily fix $\mu>0$. We first show that the system
\eqref{Liu-12-7-01} is periodically stabilizable. To this end, we  take $k=[\mu]+1$, where $[\mu]$ denotes the integer part of $\mu$.
 Then by  $(ii)$, we can find
	  $n_k\in\mathbb{N}$ and $ C(k)>0$  such that
	\begin{equation*}
			\|\Phi(n_k\mathbb{T},0)^*\psi\|_X\leq
 C(k)\|B(\cdot)^*\Phi(n_k \mathbb{T},\cdot)^*\psi\|_{L^2(0,n_k\mathbb{T};U)}+e^{-kn_k\mathbb{T}}\|\psi\|_X
		\;\;\mbox{for any}\;\;\psi\in X.
	\end{equation*}
		Hence,  by the first equality in \eqref{12.6-1} and the same way to prove (\ref{10-26Liu001}), we have
	\begin{eqnarray*}
		\|\Phi^\mu(n_k\mathbb{T},0)^*\psi\|_X&\leq&
		C(k)\|e^{\mu \cdot}B(\cdot)^*\Phi^\mu(n_k \mathbb{T};\cdot)^*\psi\|_{L^2(0,n_k\mathbb{T};U)}+e^{-(k-\mu)n_k\mathbb{T}}\|\psi\|_X\\
		&\leq& C(k)e^{\mu n_k\mathbb{T}}
		\|B(\cdot)^*\Phi^\mu(n_k \mathbb{T}, \cdot)^*\psi\|_{L^2(0,n_k\mathbb{T};U)}+e^{-(k-\mu)n_k\mathbb{T}}\|\psi\|_X
		\;\;\mbox{for any}\;\;\psi\in X.
	\end{eqnarray*}	
	Since $e^{-(k-\mu)n_k\mathbb{T}}<1$, the above, along with Lemma \ref{10-19.lemma1}, yields
that the system \eqref{Liu-12-7-01} is periodically stabilizable.

We next show that  the system \eqref{Liu-12-7-01} is periodically completely stabilizable.
Indeed, by the periodic stabilizability of the system \eqref{Liu-12-7-01} and by \eqref{12.6-1}, one can easily check
that there is $C=C(\mu)>0$ and  $\mathbb{T}$-periodic   $K(\cdot)\in L^\infty(\mathbb{R}^+;{\cal L}(X;U))$ (which depends on $\mu$)  so that $\|\Phi_K(t,0)\|_{\mathcal{L}(X)}\leq Ce^{-\mu t}$ for all $t\geq 0$.
Then, since $\mu>0$ was arbitrarily taken, we get, from Definition \ref{definition 3.7,11-24},
the periodic complete stabilizability of the system \eqref{Liu-12-7-01}.
\end{proof}

\section{Applications}\label{yu-section-app-4}

In this section, we present  several examples of control systems, which are not null controllable, but
can be shown to be  completely stabilizable, through verifying the weak observability inequalities in
 Theorem \ref{yu-theorem-3-4-1}, as well as its extensions presented in Section \ref{yu-sect-3}.
 Besides, we  give some sufficient conditions ensuring the weak observability inequalities, from the
 perspective of the spectral projection. These conditions
  not only are useful in the studies of our examples, but also have independent interest.

\subsection{Conditions ensuring the weak observability}\label{yu-suff-1}
This subsection presents two theorems. One is about the setting in Section \ref{yu-section-1}, while another is about the setting in Subsection \ref{yu-sec-10-18-1}.
\begin{theorem}\label{yu-theorem-10-9-1}
    Let $A$, with its domain $D(A)$, generate a $C_0$-semigroup $S(t)$ $(t\geq0)$ on $X$ and
    $B\in\mathcal{L}(U;X)$.  Let $\{P_k\}_{k\in\mathbb{N}}$ be a family of orthogonal projections on $X$. Suppose  the following dissipative inequality is satisfied:
    \begin{enumerate}
  \item [(a)] For each $k\in\mathbb{N}$, there exists $M_k>0$ and $\alpha_k>0$, with $\alpha_k\rightarrow +\infty$ as $k\rightarrow +\infty$, such that
\begin{equation}\label{yu-10-9-10}
    \|(I-P_k)S(t)^*\varphi\|_X\leq M_ke^{-\alpha_k t}\|\varphi\|_X\;\;\mbox{for any}\;\;t\in\mathbb{R}^+\mbox{and}\;\;\varphi\in X.
\end{equation}
\end{enumerate}
    Then the statement (iii) in Theorem \ref{yu-theorem-3-4-1} holds when one of the following two conditions $(b_1)$ and $(b_2)$ is true:
\begin{enumerate}
  \item [($b_1$)]  For each $k\in \mathbb{N}$, there exists $C_k>0$ such that the following  spectral inequality is true:
\begin{equation}\label{yu-10-9-3}
    \|P_k\varphi\|_X\leq C_k\|B^*P_k\varphi\|_U\;\;\mbox{for any}\;\;\varphi\in X;
\end{equation}
  \item [($b_2$)] There exists  $T_0> 0$ such that for each $k\in\mathbb{N}$, there exists $C(k,T_0)>0$ such that the following truncated observability inequality holds:
\begin{equation}\label{yu-10-10-40}
    \|P_kS(T_0)^*\varphi \|_X^2\leq C(k,T_0)\int_0^{T_0}\|B^*P_kS(t)^*\varphi\|_U^2dt\;\;\mbox{for any}\;\;\varphi\in X.
\end{equation}
\end{enumerate}
\end{theorem}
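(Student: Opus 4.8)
The plan is to control $\|S(T)^*\varphi\|_X$ by splitting it, for a suitable index $k=k(\alpha)$, into the ``low'' part $\|P_kS(T)^*\varphi\|_X$ and the ``high'' part $\|(I-P_k)S(T)^*\varphi\|_X$. The high part is immediately handled by the dissipative inequality (a): it is bounded by $M_ke^{-\alpha_kT}\|\varphi\|_X$. Since $\alpha_k\to+\infty$, given $\alpha>0$ I would fix once and for all $k=k(\alpha)\in\mathbb N^+$ with $\alpha_k\ge\alpha$, so that $e^{-\alpha_kT}\le e^{-\alpha T}$ for every $T\ge0$; this is precisely the mechanism producing the factor $e^{-\alpha T}$ in \eqref{yu-6-22-1}. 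The whole difficulty is thus shifted to estimating the low part $\|P_kS(T)^*\varphi\|_X$ by the observation term, and the crucial point is that the remaining $\|\varphi\|_X$-errors must also carry the decay $e^{-\alpha_kT}$, which forces all estimates to be \emph{localized near the terminal time} $T$ rather than spread over $[0,T]$.

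For this localization I would work on the window $[T-T_0,T]$ via the semigroup identity $S(T)^*=S(T_0)^*S(T-T_0)^*$, i.e. set $\phi:=S(T-T_0)^*\varphi$ so that $P_kS(T)^*\varphi=P_kS(T_0)^*\phi$ and $S(t)^*\phi=S(t+T-T_0)^*\varphi$ for $t\in[0,T_0]$. Under $(b_2)$ the argument is then short: the truncated observability \eqref{yu-10-10-40} applied to $\phi$ bounds $\|P_kS(T_0)^*\phi\|_X^2$ by $C(k,T_0)\int_0^{T_0}\|B^*P_kS(t)^*\phi\|_U^2\,dt$, and after the change of variable $\tau=t+T-T_0$ this integral lives on $[T-T_0,T]$. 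Writing $B^*P_k=B^*-B^*(I-P_k)$ and using (a) on $[T-T_0,T]$ replaces $B^*P_kS(\tau)^*\varphi$ by $B^*S(\tau)^*\varphi$ up to an error of order $e^{-\alpha_k(T-T_0)}\|\varphi\|_X$; finally $\int_{T-T_0}^T\|B^*S(\tau)^*\varphi\|_U^2\,d\tau\le\|B^*S(T-\cdot)^*\varphi\|_{L^2(0,T;U)}^2$ by time reversal. Collecting terms yields \eqref{yu-6-22-1} with $D(\alpha,T)$ proportional to $\sqrt{C(k,T_0)}$ and a $T$-independent constant $C(\alpha)$ absorbing the fixed factor $e^{\alpha_kT_0}$.

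The case $(b_1)$ is the main obstacle, because the spectral inequality \eqref{yu-10-9-3} is an \emph{instantaneous} estimate while the observation in \eqref{yu-6-22-1} is integrated in time, and $P_k$ need not commute with $S(t)^*$. My plan is to manufacture a windowed estimate by hand: for $t\in[0,T_0]$ I would write $P_kS(T)^*\varphi=P_kS(T_0-t)^*P_kS(t)^*\phi+P_kS(T_0-t)^*(I-P_k)S(t)^*\phi$, bound the first term by $M_{T_0}\|P_kS(t)^*\phi\|_X$ (with $M_{T_0}:=\sup_{s\in[0,T_0]}\|S(s)\|_{\mathcal L(X)}$ and $\|P_k\|\le1$) and the second, via (a), by $M_{T_0}M_ke^{-\alpha_k(t+T-T_0)}\|\varphi\|_X$, and then square and integrate in $t$ over $[0,T_0]$. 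This expresses $\|P_kS(T)^*\varphi\|_X^2$ through $\int_0^{T_0}\|P_kS(t)^*\phi\|_X^2\,dt$ plus an error of order $e^{-2\alpha_k(T-T_0)}\|\varphi\|_X^2$; applying \eqref{yu-10-9-3} pointwise turns the integral into $C_k^2\int_0^{T_0}\|B^*P_kS(t)^*\phi\|_U^2\,dt$, after which the same reduction $B^*P_k=B^*-B^*(I-P_k)$ and time reversal as in the $(b_2)$ case complete the proof. The delicate point I would watch is that the cross term $P_kS(T_0-t)^*(I-P_k)S(t)^*\phi$ really does inherit the decay: this is guaranteed only because $(I-P_k)S(\cdot)^*$ is evaluated at times $t+T-T_0\ge T-T_0$, so its contribution is $O(e^{-\alpha_k(T-T_0)})$ and hence $O(e^{-\alpha T})$ after the choice $\alpha_k\ge\alpha$. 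A naive estimate spread over all of $[0,T]$ would instead pick up the growth of $\|S(s)\|_{\mathcal L(X)}$ and destroy the decay, which is exactly why the window $[T-T_0,T]$ is essential in both cases.
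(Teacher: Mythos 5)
Your proposal is correct and follows essentially the same route as the paper's proof: fix $k$ with $\alpha_k\ge\alpha$, split by the orthogonal projection, localize the observation to a fixed-length window adjacent to the terminal time, swap $B^*P_k$ for $B^*$ at the cost of dissipative error terms, and absorb all remainders into $C(\alpha)e^{-\alpha T}\|\varphi\|_X$. The only cosmetic difference is in case $(b_1)$, where the paper converts the pointwise spectral inequality into an integrated one via the averaging identity $S(T)^*\varphi=\int_0^1S(t)^*S(T-t)^*\varphi\,dt$ over the window $[T-1,T]$, whereas you split at the terminal time first and then propagate the low-frequency part back over $[T-T_0,T]$ --- the same mechanism in a slightly different order.
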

\begin{proof} Since $S(t)$ $(t\geq0)$ is a  $C_0$-semigroup,  there exists  $M> 1$ and $\delta_0> 0$ such that
    \begin{equation}\label{10.26.01}
    \|S(t)\|_{\mathcal{L}(X)}\leq M e^{\delta_0 t}\;\;\mbox{for all} \;\;t\in\mathbb{R}^+.
\end{equation}
Arbitrarily fix  $\alpha>0$. Then fix $k\in\mathbb{N}$ such that $\alpha_k>\alpha$. (Such $k$ exists, since $\alpha_k$ tends to $+\infty$.) We shall prove statement $(iii)$ in Theorem \ref{yu-theorem-3-4-1}
 by two cases.

 We first consider  the case when  $(b_1)$ is true. Arbitrarily fix  $T>1$ and $\varphi\in X$.
    By \eqref{10.26.01} and  the Cauchy-Schwarz inequality, we find
  \begin{eqnarray}\label{10.26.02}
   \|S(T)^*\varphi\|^2_X&=&\left\|\int^1_0S(t)^*S(T-t)^*\varphi\mathrm dt\right\|^2_X\leq\left(\int^1_0\|S(t)^*\|_{\mathcal{L}(X)}\|S(T-t)^*\varphi\|_X\mathrm dt\right)^2\nonumber\\
    &\leq& M^2e^{2\delta_0 } \left(\int^1_0\|S(T-t)^*\varphi\|_X\mathrm dt\right)^2\leq M^2e^{2\delta_0 } \int^1_0\|S(T-t)^*\varphi\|_X^2\mathrm dt.
 \end{eqnarray}
Meanwhile, since $P_k$ is an orthogonal projection, it follows from
  (\ref{yu-10-9-3}) and (\ref{yu-10-9-10})
that for each $t\in[0,T]$,
    \begin{eqnarray}\label{10.26.05}
      &\;&\|S(T-t)^*\varphi\|_X^2=\|P_kS(T-t)^*\varphi\|_X^2
      +\|(I-P_k)S(T-t)^*\varphi\|_X^2\nonumber\\
     &\leq&C_k^2\|B^*P_kS(T-t)^*\varphi\|_U^2+\|(I-P_k)S(T-t)^*\varphi\|_X^2\nonumber\\
      &\leq&2C_k^2\|B^*S(T-t)^*\varphi\|_U^2
      +2C_k^2\|B^*(I-P_k)S(T-t)^*\varphi\|_U^2+\|(I-P_k)S(T-t)^*\varphi\|_X^2\nonumber\\
    &\leq&2C_k^2\|B^*S(T-t)^*\varphi\|_U^2+\left[2C_k^2\|B\|_{\mathcal{L}(U; X)}^2+1\right]\|(I-P_k)S(T-t)^*\varphi\|_X^2\nonumber\\
  &\leq&2C_k^2\|B^*S(T-t)^*\varphi\|_U^2+\left[2C_k^2\|B\|_{\mathcal{L}(U; X)}^2+1\right]M_k^2e^{-2\alpha_k(T-t)}\|\varphi\|_X^2.
     \end{eqnarray}
    Next, integrating both sides of \eqref{10.26.05} for $t$ over
     $(0,1)$, using (\ref{10.26.02}), and noting  that $\alpha_k>\alpha$, we obtain
      \begin{equation*}
 \|S(T)^*\varphi\|_X^2
 \leq 2 M^2C_k^2e^{2\delta_0 } \int^1_0\|B^*S(T-t)^*\varphi\|_U^2\mathrm dt
 + M^2M_k^2e^{2(\delta_0+\alpha)} \left[2C_k^2\|B\|_{\mathcal{L}(U; X)}^2+1\right]e^{-2\alpha T}\|\varphi\|_X^2,
 \end{equation*}
 which, along with the fact that $T>1$, leads to
 \begin{equation}\label{10.26.08}
 \|S(T)^*\varphi\|_X
 \leq D(\alpha) \|B^*S(T-\cdot)^*\varphi\|_{L^2(0, T; U)}
 + C(\alpha)e^{-\alpha T}\|\varphi\|_X, %
 \end{equation}
 where
 \begin{equation*}
 D(\alpha):=\sqrt{2}MC_ke^{\delta_0 },\;\;C(\alpha):= MM_ke^{\delta_0+\alpha}\sqrt{ \left[2C_k^2\|B\|_{\mathcal{L}(U; X)}^2+1\right]}.
 \end{equation*}
 Since $k$ depends only on $\alpha$, the statement  $(iii)$ in Theorem \ref{yu-theorem-3-4-1}
 follows from \eqref{10.26.08} in the current case.

 We next consider the case where  $(b_2)$ holds. Arbitrarily fix  $T\geq 2T_0$. Then, there exists
 a natural number  $N\geq 2$ such that $NT_0\leq T<(N+1)T_0$.
 By \eqref{10.26.01}, \eqref{yu-10-10-40}  and \eqref{yu-10-9-10}, we see that
 for each  $\varphi\in X$,
\begin{eqnarray}\label{yu-10-10-44}
   \|S(T)^*\varphi\|_X^2&=& \|S(T-NT_0)^*S(NT_0)^*\varphi\|_X^2\leq M^2e^{2\delta_0 T_0}\|S(NT_0)^*\varphi\|_X^2\nonumber\\
    &=& M^2e^{2\delta_0 T_0}(\|P_kS(T_0)^*S((N-1)T_0)^*\varphi\|_X^2+\|(I-P_k)S(NT_0)^*\varphi\|_X^2)\nonumber\\
    &\leq& M^2e^{2\delta_0 T_0}\left(C(k, T_0)\int_0^{T_0}\left\|B^*P_kS
    ((N-1)T_0+t)^*\varphi\right\|^2_Udt
    +\|(I-P_k)S(NT_0)^*\varphi\|_X^2\right)\nonumber\\
    &\leq& 2M^2e^{2\delta_0 T_0}C(k, T_0)\left(\int_{(N-1)T_0}^{NT_0}\left\|B^*S
    (t)^*\varphi\right\|^2_Udt+\int_{(N-1)T_0}^{NT_0}\left\|B^*(I-P_k)S
    (t)^*\varphi\right\|^2_Udt\right)\nonumber\\
    &&+M^2e^{2\delta_0 T_0}M_k^2e^{-2\alpha_kNT_0}\|\varphi\|_X^2.
\end{eqnarray}
Meanwhile, it follows  from \eqref{yu-10-9-10} that
\begin{eqnarray}\label{yu-10-18-6-1-1}
    \int_{(N-1)T_0}^{NT_0}\left\|B^*(I-P_k)S(t)^*\varphi\right\|^2_Udt
    &\leq& \|B\|^2_{\mathcal{L}(U; X)} M_k^2  \int_{(N-1)T_0}^{NT_0}e^{-2\alpha_k t}\|\varphi\|_X^2dt\nonumber\\
    &\leq& \|B\|^2_{\mathcal{L}(U; X)}M_k^2 T_0  e^{-2\alpha_k (N-1)T_0}\|\varphi\|_X^2.
\end{eqnarray}
Using \eqref{yu-10-10-44} and \eqref{yu-10-18-6-1-1}, noting that  $\alpha_k>\alpha$ and $NT_0\leq T<(N+1)T_0$,  we get
\begin{eqnarray}\label{yu-10-18-6-1}
    \|S(T)^*\varphi\|_X^2\leq D(\alpha)^2\int_{0}^{T}\left\|B^*S(t)^*\varphi\right\|^2_Udt+ C(\alpha)^2e^{-2\alpha T}\|\varphi\|_X^2,
\end{eqnarray}
where
\begin{eqnarray*}
D(\alpha):= Me^{\delta_0 T_0}\sqrt{2C(k, T_0)},\;\; C(\alpha):=M M_k e^{(\delta_0+\alpha)T_0}\sqrt{2C(k, T_0)\|B^*\|^2_{\mathcal{L}(X; U)}T_0e^{2\alpha T_0}+1}.
\end{eqnarray*}
Now, the statement  $(iii)$ in Theorem \ref{yu-theorem-3-4-1}
 follows from \eqref{yu-10-18-6-1} in the current case.
\end{proof}

\begin{remark}
     (i) The  Lebeau-Robbiano strategy says in plain language that
     the null controllability can be implied by
     a spectral inequality and a dissipative inequality. In this strategy,
     the following compatibility condition on the decay rate in the dissipative inequality
     and the growth rate in the spectral inequality is necessary: the former is greater than the latter
      (see e.g. \cite[Theorem 2.2]{Miller1}).  In the studies of the exponential stabilizability, such compatibility condition is relaxed
     (see \cite[Lemma 2.2]{Huang-Wang-Wang}). Our Theorem~\ref{yu-theorem-10-9-1}, together with   Theorem~\ref{yu-theorem-3-4-1}, improves \cite[Lemma 2.2]{Huang-Wang-Wang} from two perspectives: First, it serves for the complete stabilizability; Second, there is no any compatibility condition on the constants in (\ref{yu-10-9-10}) and~(\ref{yu-10-9-3}).

      (ii) When $P_kS(\cdot)=S(\cdot)P_k$,  the condition  $(b_1)$ implies condition  $(b_2)$.
      This can be checked directly.

(iii) We borrowed the name ``truncated observability inequality''  from \cite{Barbu}, where such kind of observability inequality is applied to construct an internal feedback control
 stabilizing  the Navier-Stokes equations.
\end{remark}

To show the similar result to Theorem \ref{yu-theorem-10-9-1} in the setting where $B$ is unbounded, we need
the next lemma.

\begin{lemma}\label{Liu-11-18-remark}
Suppose that  $[A,B]$ satisfies the following
conditions:
\begin{enumerate}
  \item[(a)] The operator $A$, with its domain $D(A)$,  generates an analytic semigroup $S(t)\ (t\geq 0)$ on $X$.
   \item [(b)]
   There exists $\gamma\in \left(0,\frac{1}{2}\right)$
    such that  $B\in\mathcal{L}(U;X_{-\gamma})$,
    where
    $X_{-\gamma}$  is the completion of $X$ with respect to the norm $\|z\|_{-\gamma}:=\|(\rho_0I-A)^{-\gamma}z\|_X$, $z\in X$ (where $\rho_0\in \rho(A)\cap \mathbb{R}$ is arbitrarily fixed).
\end{enumerate}
    Then the following conclusions are true:
\begin{enumerate}
  \item [(i)] The assumptions $(\widetilde{H_1})$-$(\widetilde{H_3})$ in Section \ref{yu-sec-10-18-1}
  hold for $[A,B]$.
  \item [(ii)] The operator $(\rho_0I-A)^\gamma$ has a unique extension
     $\widetilde{(\rho_0I-A)^\gamma}\in
   \mathcal{L}(X;X_{-\gamma})$.
   Moreover, this extension is invertible and  $(\widetilde{(\rho_0I-A)^\gamma})^{-1}B
   \in\mathcal{L}(U;X)$.
\end{enumerate}
\end{lemma}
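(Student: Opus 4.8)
The plan is to exploit that $\rho_0$ strictly exceeds the growth bound $\omega_0:=\inf_{t>0}\frac1t\ln\|S(t)\|_{\mathcal L(X)}$, so that $\Lambda:=\rho_0 I-A$ is a \emph{positive sectorial} operator: $-\Lambda$ generates the exponentially stable analytic semigroup $e^{-\rho_0 t}S(t)$. Hence the fractional powers $\Lambda^\gamma$ and $(\Lambda^*)^\gamma=(\rho_0 I-A^*)^\gamma$ are well defined by the standard functional calculus, have dense domains, commute with the semigroup, and obey the usual smoothing estimates. I would then treat $(i)$ and $(ii)$ in turn.

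For $(i)$, the condition $(\widetilde{H_1})$ is immediate, since every analytic semigroup is in particular a $C_0$-semigroup. For $(\widetilde{H_2})$ I would use that $\gamma<1$ gives the bounded dense inclusion $D(A^*)=D(\Lambda^*)\hookrightarrow D((\Lambda^*)^\gamma)$ (indeed $(\Lambda^*)^\gamma=(\Lambda^*)^{\gamma-1}\Lambda^*$ with $(\Lambda^*)^{\gamma-1}$ bounded); dualizing with respect to the pivot space $X$ yields $X_{-\gamma}=[D((\Lambda^*)^\gamma)]'\hookrightarrow[D(A^*)]'=X_{-1}$, so $B\in\mathcal L(U;X_{-\gamma})\subset\mathcal L(U;X_{-1})$. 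The crux is $(\widetilde{H_3})$. From $B\in\mathcal L(U;X_{-\gamma})$ I get by duality $B^*\in\mathcal L(D((\Lambda^*)^\gamma);U)$, that is $\|B^*x\|_U\le C\|(\Lambda^*)^\gamma x\|_X$. Writing $S(t)^*=e^{\rho_0 t}e^{-\Lambda^* t}$ and invoking the analytic smoothing bound $\|(\Lambda^*)^\gamma e^{-\Lambda^* t}\|_{\mathcal L(X)}\le C_\gamma t^{-\gamma}e^{-\delta t}$ (valid for all $t>0$ with some $\delta>0$, because $e^{-\Lambda^* t}$ is an exponentially stable analytic semigroup), I obtain $\|B^*S(t)^*x\|_U\le C\,C_\gamma\,t^{-\gamma}e^{(\rho_0-\delta)t}\|x\|_X$ for $x\in D(A^*)$. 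Squaring and integrating over $(0,T)$, the singular factor $t^{-2\gamma}$ is integrable at the origin precisely because $2\gamma<1$; this is exactly where the hypothesis $\gamma<\tfrac12$ is used, and it delivers $(\widetilde{H_3})$ with $C(T)=C^2C_\gamma^2\int_0^T t^{-2\gamma}e^{2(\rho_0-\delta)t}\,dt<\infty$.

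For $(ii)$ I would construct the extension by transposition. By the very definition of its norm, the operator $(\Lambda^*)^\gamma\colon D((\Lambda^*)^\gamma)\to X$ is an isometric isomorphism. I would then define $\widetilde{\Lambda^\gamma}\in\mathcal L(X;X_{-\gamma})$ as its transpose under the pivot identifications $X'=X$ and $[D((\Lambda^*)^\gamma)]'=X_{-\gamma}$, namely as the duality pairing
\[
\langle \widetilde{\Lambda^\gamma}x,\psi\rangle:=\langle x,(\Lambda^*)^\gamma\psi\rangle_X,\qquad x\in X,\ \psi\in D((\Lambda^*)^\gamma).
\]
Boundedness $\|\widetilde{\Lambda^\gamma}x\|_{X_{-\gamma}}\le\|x\|_X$ is read off directly from Cauchy-Schwarz. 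That this genuinely extends $\Lambda^\gamma$ follows from the identity $(\Lambda^\gamma)^*=(\Lambda^*)^\gamma$, which gives $\langle\Lambda^\gamma x,\psi\rangle_X=\langle x,(\Lambda^*)^\gamma\psi\rangle_X$ for $x\in D(\Lambda^\gamma)$; uniqueness of the extension is forced by the density of $D(\Lambda^\gamma)$ in $X$. Since the transpose of an isometric isomorphism is again an isometric isomorphism, $\widetilde{\Lambda^\gamma}$ is invertible, and finally $(\widetilde{\Lambda^\gamma})^{-1}B\in\mathcal L(U;X)$ by composing $B\in\mathcal L(U;X_{-\gamma})$ with $(\widetilde{\Lambda^\gamma})^{-1}\in\mathcal L(X_{-\gamma};X)$.

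The main obstacle I anticipate lies in $(\widetilde{H_3})$: one must carefully justify that the adjoint semigroup $S(t)^*$ is itself analytic with the same growth bound (so that the smoothing estimate and the commutation of $(\Lambda^*)^\gamma$ with $e^{-\Lambda^* t}$ are legitimate), and track the interplay between the rescaling factor $e^{\rho_0 t}$ and the decay $e^{-\delta t}$. The role of $\gamma<\tfrac12$ should be isolated cleanly as the integrability of $t^{-2\gamma}$ near $t=0$, which is the single place the fractional threshold is decisive.
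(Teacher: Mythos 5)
Your proposal is correct and follows essentially the same route as the paper: the same factorization of $B^*S(t)^*$ through $(\rho_0I-A^*)^\gamma$, the same analytic smoothing estimate $\|(\rho_0I-A^*)^\gamma S(t)^*\|_{\mathcal L(X)}\le C_\gamma t^{-\gamma}e^{\rho_0 t}$ (the paper cites Pazy, Ch.~2, Thm.~6.13, after noting $0\in\rho(\rho_0I-A^*)$), and the same use of $\gamma<\tfrac12$ to integrate $t^{-2\gamma}$ near the origin. The only difference is cosmetic: the paper asserts the existence, invertibility, and compatibility of the extension $\widetilde{(\rho_0I-A)^\gamma}$ as facts that ``one can directly check,'' whereas you spell out the transposition construction explicitly.
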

\begin{proof}
  We organize the proof by two steps.

  \vskip 5pt

  \noindent{\it Step 1. We show the conclusion $(i)$. }
  \vskip 5pt

   First,  $(\widetilde{H_1})$ is clearly true.
\par

   Second, one can directly check that
   $X_{-\gamma}$ is continuously embedded into $X_{-1}$. Then by the condition   $(b)$, we get $B\in \mathcal{L}(U; X_{-1})$ which leads to $(\widetilde{H_2})$.

\par

 We are now going to show $(\widetilde{H_3})$.
 First of all,  one can directly check
   the following two facts:
\begin{enumerate}
\item[($f_1$)] The operator $(\rho_0I-A)^\gamma$ belongs to $\mathcal{L}(D((\rho_0I-A)^\gamma);X)$ (Here, the norm of $D((\rho_0I-A)^\gamma)$ is as:  $\|z\|_{D((\rho_0I-A)^\gamma)}=\|(\rho_0I-A)^\gamma z\|_X$, $z\in D((\rho_0I-A)^\gamma)$.)  and  has a unique extension
    $\widetilde{(\rho_0I-A)^\gamma}\in \mathcal{L}(X;X_{-\gamma})$ which is invertible (see \cite[Chapter 2, Proposition 2.10.3]{Tucsnak-Weiss});
   \item [($f_2$)] $B\in \mathcal{L}(U;X_{-\gamma})$ if and only if $\Big(\widetilde{(\rho_0I-A)^\gamma}\Big)^{-1}B\in \mathcal{L}(U;X)$.
   \end{enumerate}
    The above facts $(f_1)$ and $(f_2)$, together with $D(A^*)\subset D((\rho_0I-A^*)^\gamma)$ and the analyticity of $S(t)$ ($t\geq 0$) (which means that $S(t)^*x\in D(A^*)$ for any $x\in X$ when $t>0$), yield that for each $t>0$,
\begin{eqnarray}\label{yu-5-8-1}
    B^*S(t)^*x&=&B^*\left(\widetilde{(\rho_0I-A)^\gamma}^*\right)^{-1}
    \widetilde{(\rho_0I-A)^\gamma}^*S(t)^*x\nonumber\\
    &=&B^*\left(\widetilde{(\rho_0I-A)^\gamma}^*\right)^{-1}(\rho_0I-A^*)^\gamma S(t)^*x
    \;\;\mbox{for any}\;\;x\in X.
\end{eqnarray}
    Here, we notice that $B^*\in \mathcal{L}(D(A^*);U)$ since $B\in \mathcal{L}(U;X_{-1})$ is proved and $X_{-1}$ is the dual space of $D(A^*)$ with respect to the pivot space $X$ (see $(d_3)$ in Remark \ref{yu-remark-12-1}).
\par
 Meanwhile,
 we have the following observations: First,
 the analytic semigroup $S(t)^*e^{-\rho_0 t}$ $(t\geq0)$) is generated by
   $-\rho_0I+A^*$; Second,
  since $\rho_0\in \rho(A)\cap\mathbb{R}$, we have   $0\in \rho(\rho_0I-A^*)$ (see \cite[Chapter 1, Lemma 10.2]{Pazy}).
    From these observations, we can use \cite[Chapter 2, Theorem 6.13]{Pazy} to find
    $C(\gamma)>0$ such that
\begin{equation}\label{Liu-11-15-01}
\|(\rho_0  I-A^*)^{\gamma}S(t)^*\|_{\mathcal{L}(X)}\leq C(\gamma)e^{\rho_0 t}t^{-\gamma}\;\;\mbox{for all}\;\; t>0.
\end{equation}

Now, it follows by \eqref{Liu-11-15-01} and (\ref{yu-5-8-1})  that, when $T>0$ and $x\in X$,
\begin{eqnarray*}
\int_0^T\|B^* S(t)^*x\|^2_Udt
&=&\int_0^T\Big\|B^*\Big(\widetilde{(\rho_0I-A)^\gamma}^*\Big)^{-1}(\rho_0 I-A^*)^\gamma S(t)^*x\Big\|^2_Udt\nonumber\\
&\leq& \Big\|B^*\Big(\widetilde{(\rho_0I-A)^\gamma}^*\Big)^{-1}
\Big\|_{\mathcal{L}(X;U)}^2C(\gamma)^2e^{2\rho_0 T}\int_0^T\frac{1}{t^{2\gamma}}dt\|x\|^2_X\leq C(T, \gamma)\|x\|^2_X,
\end{eqnarray*}
where $C(T,\gamma):=\Big\|B^*\Big(\widetilde{(\rho_0I-A)^\gamma}^*
\Big)^{-1}\Big\|^2_{\mathcal{L}(X;U)}C(\gamma)^2e^{2\rho_0 T}\frac{T^{1-2\gamma}}{1-2\gamma}$. This leads to $(\widetilde{H_3})$ obviously.
\par

\vskip 5pt

  \noindent{\it Step 2. The conclusion $(ii)$ follows from the above $(f_1)$ and $(f_2)$ at once. }
\end{proof}
\begin{theorem}\label{yu-corollary-10-10-1}

Assume that the conditions $(a)$ and $(b)$ in Lemma \ref{Liu-11-18-remark} hold.
Suppose that there is a family of
orthogonal projections $\{P_k\}_{k\in\mathbb{N}}$ on $X$  satisfying the commutative condition:
 \begin{equation}\label{yu-10-10-41}
P_kS(\cdot)=S(\cdot)P_k;
\end{equation}
 the dissipative condition  $(a)$ in Theorem \ref{yu-theorem-10-9-1};
  and the following observability condition:
  \begin{enumerate}
    \item [$(b'_2)$] there exists $T_0> 0$ such that for each $k\in\mathbb{N}$, there is $C(k,T_0)>0$ such that
\begin{equation}\label{yu-12-2-1}
    \|P_kS(T_0)^*\varphi \|_X^2\leq C(k,T_0)\int_0^{T_0}\|B^*P_kS(t)^*\varphi\|_U^2dt\;\;\mbox{for any}\;\;\varphi\in D(A^*).
\end{equation}
  \end{enumerate}
 Then the statement (iii) in Theorem \ref{yu-theorem-10-5-1} is true.
\end{theorem}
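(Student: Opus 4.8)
The plan is to follow the structure of the proof of case $(b_2)$ of Theorem \ref{yu-theorem-10-9-1}, replacing every step that exploited the boundedness of $B$ by an argument based on the analytic smoothing furnished by Lemma \ref{Liu-11-18-remark}. First, conclusion $(i)$ of Lemma \ref{Liu-11-18-remark} guarantees that $(\widetilde{H_1})$--$(\widetilde{H_3})$ hold, so statement $(iii)$ of Theorem \ref{yu-theorem-10-5-1} is meaningful and it suffices to produce, for each $\alpha>0$, a $T$-independent constant $C(\alpha)$ and a constant $D(\alpha,T)$ for which \eqref{yu-11-30-2} holds for all $T\geq 2T_0$ and $\varphi\in D(A^*)$. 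Fix $\alpha>0$ and choose $k\in\mathbb{N}^+$ with $\alpha_k>\max\{\alpha,\rho_0\}$, which is possible since $\alpha_k\to+\infty$. Pick $N\geq 2$ with $NT_0\leq T<(N+1)T_0$ and start from the splitting
\[
\|S(T)^*\varphi\|_X^2=\|S(T-NT_0)^*S(NT_0)^*\varphi\|_X^2\leq M^2e^{2\delta_0 T_0}\big(\|P_kS(T_0)^*S((N-1)T_0)^*\varphi\|_X^2+\|(I-P_k)S(NT_0)^*\varphi\|_X^2\big),
\]
where $\|S(t)\|_{\mathcal{L}(X)}\leq Me^{\delta_0 t}$.

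The diagonal $(I-P_k)$ term is controlled at once by the dissipative inequality $(a)$ of Theorem \ref{yu-theorem-10-9-1}, contributing a quantity of order $M_k^2e^{-2\alpha_k NT_0}\|\varphi\|_X^2$. For the $P_k$ term I apply $(b'_2)$ with $S((N-1)T_0)^*\varphi\in D(A^*)$ in place of $\varphi$ and use the semigroup identity $S(t)^*S((N-1)T_0)^*=S((N-1)T_0+t)^*$, reducing it to $C(k,T_0)\int_{(N-1)T_0}^{NT_0}\|B^*P_kS(t)^*\varphi\|_U^2\,dt$. Writing $P_k=I-(I-P_k)$ and using $\|a-b\|^2\le 2\|a\|^2+2\|b\|^2$ splits this into the admissible term $\int_{(N-1)T_0}^{NT_0}\|B^*S(t)^*\varphi\|_U^2\,dt\leq\|B^*S(\cdot)^*\varphi\|_{L^2(0,T;U)}^2$ (which, after the change of variable $t\mapsto T-t$, is exactly the observation term in \eqref{yu-11-30-2}), plus a cross term involving $\|B^*(I-P_k)S(t)^*\varphi\|_U$.

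The cross term is the main obstacle: since $B^*$ is unbounded one can no longer estimate $\|B^*(I-P_k)S(t)^*\varphi\|_U$ by $\|B^*\|\,\|(I-P_k)S(t)^*\varphi\|_X$. Here I exploit the two hypotheses simultaneously. Using the commutation \eqref{yu-10-10-41} (hence $P_kS(t)^*=S(t)^*P_k$ and $(I-P_k)S(t)^*=S(t)^*(I-P_k)$) I factor $S(t)^*=S(t/2)^*S(t/2)^*$ and invoke the representation \eqref{yu-11-30-5} of Lemma \ref{Liu-11-18-remark} to write
\[
B^*(I-P_k)S(t)^*\varphi=B^*\Big(\widetilde{(\rho_0I-A)^\gamma}^*\Big)^{-1}(\rho_0I-A^*)^\gamma S(t/2)^*(I-P_k)S(t/2)^*\varphi .
\]
The analytic smoothing estimate \eqref{Liu-11-15-01}, $\|(\rho_0I-A^*)^\gamma S(t/2)^*\|_{\mathcal{L}(X)}\leq C(\gamma)e^{\rho_0 t/2}(t/2)^{-\gamma}$, combined with the dissipative bound $\|(I-P_k)S(t/2)^*\varphi\|_X\leq M_ke^{-\alpha_k t/2}\|\varphi\|_X$, then yields
\[
\|B^*(I-P_k)S(t)^*\varphi\|_U\leq C'\,C(\gamma)\,M_k\,e^{(\rho_0-\alpha_k)t/2}(t/2)^{-\gamma}\|\varphi\|_X,\qquad C':=\Big\|B^*\big(\widetilde{(\rho_0I-A)^\gamma}^*\big)^{-1}\Big\|_{\mathcal{L}(X;U)} .
\]
Because $t\geq(N-1)T_0\geq T_0>0$ on the integration range and $\alpha_k>\rho_0$, integrating produces a contribution decaying like $e^{-(\alpha_k-\rho_0)(N-1)T_0}\|\varphi\|_X^2$.

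Finally I collect the three contributions. Using $\alpha_k>\alpha$ and $(N-1)T_0>T-2T_0$ (from $NT_0\leq T<(N+1)T_0$), each error term is bounded by a constant depending only on $k$ (hence on $\alpha$) and on the fixed data $T_0,M,\delta_0,\gamma,\rho_0$, times $e^{-2\alpha T}\|\varphi\|_X^2$, while the admissible term carries the $T$-independent prefactor $2M^2e^{2\delta_0 T_0}C(k,T_0)$. Taking square roots gives \eqref{yu-11-30-2} with a $T$-independent $C(\alpha)$ and a constant $D(\alpha,T)$, which is precisely statement $(iii)$ of Theorem \ref{yu-theorem-10-5-1}. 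I expect the only genuinely delicate point to be the cross-term factorization above, where the interplay between the commutation property, the analytic smoothing, and the dissipative decay must be balanced so that the $t^{-\gamma}$ singularity stays integrable (guaranteed by $t\geq T_0>0$) and the exponent $\rho_0-\alpha_k$ remains negative.
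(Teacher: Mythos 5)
Your proposal follows essentially the same route as the paper: the same three-term decomposition via $NT_0\leq T<(N+1)T_0$, the truncated observability inequality $(b'_2)$ applied to $S((N-1)T_0)^*\varphi$, the dissipative bound for the $(I-P_k)$ part, and the factorization through $\big(\widetilde{(\rho_0I-A)^{\gamma}}\big)^{-1}B$ combined with the analytic smoothing estimate \eqref{Liu-11-15-01} and the commutation \eqref{yu-10-10-41} to handle the cross term $\|B^*(I-P_k)S(t)^*\varphi\|_U$. The one place where you diverge is the splitting point for the smoothing: you write $S(t)^*=S(t/2)^*S(t/2)^*$, so the smoothing is performed over a time $t/2$ that grows with $t$, at the cost of a factor $e^{\rho_0 t/2}$; the paper instead writes $S(t)^*=S(T_0)^*S(t-T_0)^*$ and smooths only over the fixed time $T_0$, so the cost $C(\gamma)e^{\rho_0 T_0}T_0^{-\gamma}$ is an absolute constant and the full dissipative decay $e^{-\alpha_k(t-T_0)}$ survives. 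Your variant therefore yields only the net rate $e^{-(\alpha_k-\rho_0)t/2}$ for the cross term, i.e.\ a squared contribution of order $e^{-(\alpha_k-\rho_0)(N-1)T_0}$, and with your choice $\alpha_k>\max\{\alpha,\rho_0\}$ this need not be dominated by $e^{-2\alpha T}$ with a $T$-independent constant: you actually need $\alpha_k-\rho_0\geq 2\alpha$. This is a real but entirely cosmetic slip --- since $\alpha_k\to+\infty$ you may simply pick $k$ with $\alpha_k>2\alpha+\rho_0$ --- or you can adopt the paper's fixed-$T_0$ splitting, which removes $\rho_0$ from the competition altogether and lets $\alpha_k>\alpha$ suffice, exactly as in the bounded case of Theorem \ref{yu-theorem-10-9-1}.
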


\begin{proof}
Let $T_0$ and $C(k,T_0)$ be given in $(b_2')$. Let $M_k$ and $\alpha_k$ be
given in $(a)$ of Theorem \ref{yu-theorem-10-9-1}.
Let $M$ and $\delta_0$ be given by (\ref{10.26.01}) which clearly holds in the current case.
Arbitrarily fix
 $\alpha>0$ and $T\geq 2T_0$. Then, there
 is a natural number $N$ with  $N\geq 2$ such that $NT_0\leq T<(N+1)T_0$.

  By Lemma \ref{Liu-11-18-remark}, (\ref{yu-10-10-41}) and (\ref{yu-12-2-1}), and by the same way as that used in the proof of  \eqref{yu-10-10-44} (in the proof of Theorem \ref{yu-theorem-10-9-1}), we can
  easily get
\begin{eqnarray}\label{Liu-11-16-01}
   \|S(T)^*\varphi\|_X^2&\leq&2M^2e^{2\delta_0 T_0}C(k, T_0)\left(\int_{(N-1)T_0}^{NT_0}\left\|B^*S
    (t)^*\varphi\right\|^2_Udt+\int_{(N-1)T_0}^{NT_0}\left\|B^*S
    (t)^*(I-P_k)\varphi\right\|^2_Udt\right)\nonumber\\
    &&+M^2e^{2\delta_0 T_0}M_k^2e^{-2\alpha_kNT_0}\|\varphi\|_X^2\;\;\mbox{for any}\;\; \varphi\in D(A^*).
\end{eqnarray}
    (Notice that
    it follows by  \eqref{yu-10-10-41} and  the property of $C_0$-semigroup that $P_kA^*=A^*P_k$ in $D(A^*)$ for each $k\in\mathbb{N}$, which implies that if $\varphi\in D(A^*)$ then $P_k\varphi\in D(A^*)$.) At the same time, we clearly have (\ref{yu-5-8-1}) and \eqref{Liu-11-15-01}. These, along with
               \eqref{yu-10-9-10} and the conclusion $(ii)$ in Lemma \ref{Liu-11-18-remark}, yield
\begin{eqnarray}\label{yu-10-18-6w}
    &&\int_{(N-1)T_0}^{NT_0}\left\|B^*S(t)^*(I-P_k)\varphi\right\|^2_Udt\nonumber\\
    &=&\int_{(N-1)T_0}^{NT_0}
    \|((\widetilde{(\rho_0I-A)^{\gamma}})^{-1} B)^*(\rho_0I-A^*)^\gamma S(T_0)^*S(t-T_0)^*(I-P_k)\varphi\|_U^2dt\nonumber\\
    &\leq&\|(\widetilde{(\rho_0I-A)^{\gamma}})^{-1} B\|^2_{\mathcal{L}(U;X)}  C(\gamma)^2e^{2\rho_0 T_0}T_0^{-2\gamma}\int_{(N-2)T_0}^{(N-1)T_0}\left\|(I-P_k)S(t)^*\varphi\right\|^2_Xdt\nonumber\\
    &\leq&\|(\widetilde{(\rho_0I-A)^{\gamma}})^{-1} B\|^2_{\mathcal{L}(U;X)}  C(\gamma)^2e^{2\rho_0 T_0}T_0^{-2\gamma} M_k^2\int_{(N-2)T_0}^{(N-1)T_0} e^{-2\alpha_k t}dt\|\varphi\|^2_X\nonumber\\
    &\leq& \|(\widetilde{(\rho_0I-A)^{\gamma}})^{-1} B\|^2_{\mathcal{L}(U;X)}  C(\gamma)^2e^{2\rho_0 T_0}T_0^{1-2\gamma}M_k^2 e^{-2\alpha_k (N-2)T_0}\|\varphi\|^2_X
    \;\;\mbox{for any}\;\;\varphi\in D(A^*).
\end{eqnarray}
Now, by  \eqref{Liu-11-16-01} and \eqref{yu-10-18-6w}, using  the facts $\alpha_k>\alpha$ and $NT_0\leq T<(N+1)T_0$,  we obtain
\begin{eqnarray*}
    \|S(T)^*\varphi\|_X^2\leq D(\alpha)^2\int_{0}^{T}\left\|B^*S(t)^*\varphi\right\|^2_Udt+ C(\alpha)^2e^{-2\alpha T}\|\varphi\|_X^2\;\;\mbox{for any}\;\;\varphi\in D(A^*),
\end{eqnarray*}
where $D(\alpha):= Me^{\delta_0 T_0}\sqrt{2C(k, T_0)}$ and
\begin{eqnarray*}
C(\alpha):=M M_k e^{(\delta_0+\alpha)T_0}\sqrt{2C(k, T_0)\|(\widetilde{(\rho_0I-A)^{\gamma}})^{-1}B\|^2_{\mathcal{L}(U;X)}C(\gamma)^2
e^{2(\rho_0+2\alpha) T_0}T_0^{1-2\gamma}+1}.
\end{eqnarray*}
 This leads to  the statement $(iii)$ in Theorem \ref{yu-theorem-10-5-1}.
\end{proof}

\subsection{Examples}\label{yu-exmp-4}
This subsection presents  several concrete controlled equations  which are not null controllable but
 can be checked to be  completely stabilizable,  via the weak observability inequalities.
The first two examples are under the framework in Section \ref{yu-section-1}, the third example is under the framework
in Subsection \ref{yu-sec-10-18-1}, while the last example is under the framework in Subsection \ref{yu-per-1}.

\vskip 5pt

\noindent\emph{\textbf{Example 1.  (Fractional heat equations in $\mathbb{R}^n$.)}}\ \
Let $c\geq 0$ and $s\in (0,1)$.
Let $E$ be a thick set in $\mathbb{R}^n$ such that $E\subset B^c(\bar{x},R)$ for some $\bar{x}\in \mathbb{R}^n$ and $R>0$ (where $B^c(\bar{x},R)$ denotes the complementary set of the closed ball centered at $\bar{x}\in \mathbb{R}^n$ and of radius $R$).
Here, by $E$ being a thick set in $\mathbb{R}^n$, we mean that
 there is $\gamma>0$ and $L>0$ such that
$$|E\cap Q_L(x)|\geq \gamma L^n \ \mathrm{for\ each}\  x\in \mathbb{R}^n,$$
   where $Q_L(x)$ is the closed cube in $\mathbb{R}^n$ (centered at $x$ and of side-length $L$) and $|E\cap Q_L(x)|$ denotes the Lebesgue measure of $E\cap Q_L(x)$.
   We consider the controlled equation:
\begin{equation}\label{p01}
\begin{cases}
\partial_t y(t,x)+(-\triangle)^{\frac{s}{2}}y(t,x)-c y(t,x)=\chi_E(x)u(t,x), &\mbox{in}\;\;\mathbb{R}^+\times \mathbb{R}^n,\\
y(0, \cdot)\in L^2(\mathbb{R}^n).
\end{cases}
\end{equation}
Here,  $\chi_E$  is the characteristic function of $E$,
$u\in L^2(\mathbb{R}^+;L^2(\mathbb{R}^n))$ and the
fractional Laplacian $(-\triangle)^{\frac{s}{2}}$ is defined by
$$
(-\triangle)^{\frac{s}{2}}\varphi:=\mathcal{F}^{-1}(|\xi|^s\mathcal{F}(\varphi)),\;\; \varphi\in C_0^\infty(\mathbb{R}^n),
$$
where and throughout this example, we use $\mathcal{F}$ and $\mathcal{F}^{-1}$ to denote the Fourier transform and its inverse respectively.

For the system \eqref{p01}, we have the following conclusions:

\begin{itemize}
\item The equation \eqref{p01} can be put into our framework (in Section \ref{yu-section-1}) in the following manner:
Let $X=U:=L^2(\mathbb{R}^n)$ and  $A:=-(-\triangle)^{\frac{s}{2}}+c$, with
$D(A)=H^s(\mathbb{R}^n)$. Let $B: L^2(\mathbb{R}^n)\rightarrow L^2(\mathbb{R}^n)$ be defined by $Bv:=\chi_E v$ for each $v\in L^2(\mathbb{R}^n)$. It is well known that $A$ generates a $C_0$-semigroup $\{S(t)\}_{t\geq 0}$ on $X$. One can directly check that $B \in \mathcal{L}(U; X)$.
\item The equation \eqref{p01} with the null control is unstable
(see  \cite[(1.6), as well as the note $(d_1)$]{Huang-Wang-Wang}).

\item Since $E\subset B^c(\bar{x},R)$, it follows by \cite[Theorem 1.3 and its generalization in Section 4.3]{Koenig}
that
the equation \eqref{p01} is not null controllable.

\item  The equation \eqref{p01} is completely stabilizable. (See Theorem \ref{Liuth4-28} below).
\end{itemize}

    The next Theorem \ref{Liuth4-28} may have independent interest.
 \begin{theorem}\label{Liuth4-28}
 Let $\hat E$ be a measurable subset of $\mathbb{R}^n$. Then the equation \eqref{p01}
  (where $E$ is replaced by $\hat E$) is completely stabilizable if and only if $\hat E$ is a thick set.
  \end{theorem}

  \begin{remark}
  Theorem \ref{Liuth4-28} extends \cite[Theorem 1.1]{Huang-Wang-Wang}, which shows that the equation \eqref{p01} (where $E$ is replaced by $\hat E$) is stabilizable if and only if $\hat E$ is a thick set.
  \end{remark}
\begin{proof}[The proof of Theorem \ref{Liuth4-28}]
We first show the {\it only if} part. Suppose that the equation \eqref{p01} (where $E$ is replaced by $\hat E$) is completely stabilizable, then it is  stabilizable. Thus it follows  by  \cite[Theorem 1.1]{Huang-Wang-Wang} that $\hat E$ is a thick set.

 We next show the {\it if} part. Assume that  $\hat E$ is a thick set. We will  use  Theorem \ref{yu-theorem-10-9-1} to show
the statement $(iii)$ of Theorem \ref{yu-theorem-3-4-1} in the following manner:
 Define, for each $k\in \mathbb{N}$, the  linear operator on $L^2(\mathbb{R}^n)$ by
\begin{equation*}
P_k\varphi:=\mathcal{F}^{-1}\left(\chi_{\{\xi\in \mathbb{R}^n: |\xi|^s-c\leq k\}}\mathcal{F}(\varphi)\right),\;\;\varphi\in L^2(\mathbb{R}^n).
\end{equation*}
     By the Plancherel theorem, one can easily check that $\{P_k\}_{k\in\mathbb{N}}$
    are bounded and self-adjoint.

Firstly, we show that $\{P_k\}_{k\in \mathbb{N}}$ are orthogonal projections. To this aim, we first prove $\{P_k\}_{k\in \mathbb{N}}$ are projections. It is suffices to prove that $P_k(I-P_k)=0$ for each $k\in\mathbb{N}$. Indeed, for any
$\varphi, \psi\in L^2(\mathbb{R}^n)$, by the Plancherel theorem and the fact $\{P_k\}_{k\in\mathbb{N}}$ are self-adjoint, we have that, for each $k\in\mathbb{N}$,
\begin{eqnarray*}
    \langle P_k(I-P_k)\varphi,\psi\rangle_{L^2(\mathbb{R}^n)}
    &=&\langle (I-P_k)\varphi,P_k\psi\rangle_{L^2(\mathbb{R}^n)}
    =\langle\mathcal{F}(\varphi)-\mathcal{F}(P_k\varphi),\mathcal{F}(P_k\psi)
    \rangle_{L^2(\mathbb{R}^n)}\nonumber\\
    &=&\langle \chi_{\{\xi\in\mathbb{R}^n:|\xi|^s-c>k\}}\mathcal{F}(\varphi),
    \chi_{\{\xi\in\mathbb{R}^n:|\xi|^s-c\leq k\}}\mathcal{F}(\psi)\rangle_{L^2(\mathbb{R}^n)}=0.
\end{eqnarray*}
    This, together with the arbitrariness of $\varphi, \psi$, means that $P_k(I-P_k)=0$ for each $k\in\mathbb{N}$. Thus, $\{P_k\}_{k\in \mathbb{N}}$ are projections.
    We next show $\{P_k\}_{k\in \mathbb{N}}$ are orthogonal. Indeed, since
    $\{P_k\}_{k\in \mathbb{N}}$ are projections and self-adjoint, we have that, for each $k\in \mathbb{N}$,
    $\mbox{Range}(P_k)\perp \mbox{Range}(I-P_k)$.
    Because of $P_k$  being projection, it is clear that $\mbox{Range}(I-P_k)=\mbox{Ker}(P_k)$
    for each $k\in\mathbb{N}$. (Indeed, if $f\in \mbox{Ker}(P_k)$, then
    $f=P_kf+(I-P_k)f=(I-P_k)f$. Thus, $f\in \mbox{Range}(I-P_k)$ and then $\mbox{Ker}(P_k)\subset
    \mbox{Range}(I-P_k)$. Conversely, if $f\in \mbox{Range}(I-P_k)$, i.e., there exists a $g\in L^2(\mathbb{R}^n)$ such that
    $f=(I-P_k)g$, then, by the fact $P_k(I-P_k)=0$, we have $P_kf=P_k(I-P_k)g=0$, i.e., $f\in
    \mbox{Ker}(P_k)$. Thus, $\mbox{Range}(I-P_k)\subset\mbox{Ker}(P_k)$. In summary, we can conclude that $\mbox{Range}(I-P_k)=\mbox{Ker}(P_k)$.)
    Therefore, $\mbox{Range}(P_k)\perp \mbox{Ker}(P_k)$ for each $k\in\mathbb{N}$, i.e.,
    $\{P_k\}_{k\in \mathbb{N}}$ are orthogonal.

  Secondly, by \cite[Lemma 3.1 and the inequality (4.1)]{Huang-Wang-Wang}, there exists a $C>0$ such that, for each $k\in\mathbb{N}$,
\begin{equation*}\label{Liu-11-17-01}
    \|P_k\varphi\|_{L^2(\mathbb{R}^n)}\leq e^{Ck^{\frac{1}{s}}}\|B^*P_k\varphi\|_{L^2(\mathbb{R}^n)}\;\;\mbox{for any}\;\;\varphi\in L^2(\mathbb{R}^n);
\end{equation*}
\begin{equation*}\label{Liu-11-17-02}
    \|(I-P_k)S(t)^*\varphi\|_{L^2(\mathbb{R}^n)}\leq e^{-k t}\|\varphi\|_{L^2(\mathbb{R}^n)}\;\;\mbox{for all}\;\;t\in\mathbb{R}^+\ \mbox{and}\;\;\varphi\in L^2(\mathbb{R}^n).
\end{equation*}
The above two inequalities clearly imply the conditions $(b_1)$  and $(a)$  in Theorem \ref{yu-theorem-10-9-1} respectively. Then,  by Theorem \ref{yu-theorem-10-9-1}, we have $(iii)$ of Theorem \ref{yu-theorem-3-4-1}.

Finally, by Theorem \ref{yu-theorem-3-4-1}, we see
 that the equation \eqref{p01} (where $E$ is replaced by $\hat E$) is completely stabilizable.
 \end{proof}

\vskip 5pt
 \noindent \emph{\textbf{Example 2. (Heat equation with Hermite operator in $\mathbb{R}^n$.)}}
 Let $c\geq n$ and let $E$ be a  subset of positive measure in a half-space of  $\mathbb{R}^n$.
We consider the equation:
\begin{equation}\label{p02}
\begin{cases}
\partial_t y(t,x)-\triangle y(t,x)+|x|^2y(t,x)-c y(t,x)=\chi_E(x)u(t,x), &\mbox{in}\;\;\mathbb{R}^+\times \mathbb{R}^n,\\
y(0, \cdot)=y_0(\cdot)\in L^2(\mathbb{R}^n),
\end{cases}
\end{equation}
where  $u\in L^2(\mathbb{R}^+;L^2(\mathbb{R}^n))$.

For the equation \eqref{p02}, we have the following conclusions:
\begin{itemize}
\item The equation \eqref{p02} can be put into the framework in Section \ref{yu-section-1} by the following manner:
Let $X=U:=L^2(\mathbb{R}^n)$ and $A:=\triangle-|x|^2+c$, with
$D(A)=\{f\in L^2(\mathbb{R}^n) : -\triangle f+|x|^2f\in L^2(\mathbb{R}^n)\}$. Let $B: L^2(\mathbb{R}^n)\rightarrow L^2(\mathbb{R}^n)$ be defined in the same way as  that used in  Example 1.
Then $A$ generates a $C_0$-semigroup $\{S(t)\}_{t\geq 0}$ on $X$ (see \cite{Titchmarsh})
and $B\in \mathcal{L}(U;X)$.

\item The equation \eqref{p02} with the null control is unstable
 (see \cite[(1.7), as well as  the note $(e_1)$]{Huang-Wang-Wang}).

 \item It follows by \cite[Theorem 1.10]{Miller} that
the equation \eqref{p02} is not null controllable.

\item  The equation \eqref{p02} is completely stabilizable.  (See Theorem \ref{Liuth4-30} below.)
\end{itemize}

 The next Theorem \ref{Liuth4-30} may have independent interest.

 \begin{theorem}\label{Liuth4-30}
  Let $\hat E$ be a measurable subset in a half-space of $\mathbb{R}^n$. Then the equation  \eqref{p02}
 (where $E$ is replaced by $\hat E$) is completely stabilizable if and only if $\hat E$ has a positive measure.
  \end{theorem}
  \begin{remark}
   Theorem \ref{Liuth4-30} extends \cite[Theorem 1.2]{Huang-Wang-Wang}, which shows that the equation \eqref{p02} (where $E$ is replaced by $\hat E$) is stabilizable if and only if $\hat E$  has a positive measure.
  \end{remark}
\begin{proof}[The proof of Theorem \ref{Liuth4-30}]
 We first show the {\it only if} part. Suppose that the equation \eqref{p02}
(where $E$ is replaced by $\hat E$)
is completely stabilizable, then it is stabilizable. Thus, it follows  by \cite[Theorem 1.2]{Huang-Wang-Wang} that $\hat E$ has a positive measure.

We next show the {\it if} part. Assume that  $\hat E$ has a positive measure. We will use  Theorem \ref{yu-theorem-10-9-1} to show
the statement $(iii)$ of Theorem \ref{yu-theorem-3-4-1}. Indeed, according to  \cite{Titchmarsh},  the operator $A_0:=-\triangle+|x|^2$ has discrete spectrum set $\sigma(A_0)=\{2k+n, k\in \mathbb{N}\}$.
 For each $k\in \mathbb{N}$, we let  $\pi_k$ be the orthogonal projection onto the linear space spanned by the eigenfunctions of $A_0$ associated with the eigenvalue $2k+n$. We then define
the orthogonal projection $P_k$ on $L^2(\mathbb{R}^n)$ by
\begin{equation*}
P_k\varphi:=\sum_{0\leq j\leq (k+c-n)/2}\pi_j\varphi,\;\;\varphi\in \mathbb{R}^n.
\end{equation*}
By \cite[Lemma 3.2 and inequality (4.17)]{Huang-Wang-Wang}, there exists a $C>0$ such that, for each $k\in\mathbb{N}$,
\begin{equation*}\label{Liu-11-17-01}
    \|P_k\varphi\|_{L^2(\mathbb{R}^n)}\leq e^{\frac{n}{2}k\ln k+Ck}\|B^*P_k\varphi\|_{L^2(\mathbb{R}^n)}\;\;\mbox{for any}\;\;\varphi\in L^2(\mathbb{R}^n);
\end{equation*}
\begin{equation*}\label{Liu-11-17-02}
    \|(I-P_k)S(t)^*\varphi\|_{L^2(\mathbb{R}^n)}\leq e^{-k t}\|\varphi\|_{L^2(\mathbb{R}^n)}\;\;\mbox{for all}\;\;t\in\mathbb{R}^+\ \mbox{and}\;\;\varphi\in L^2(\mathbb{R}^n).
\end{equation*}
The above two inequalities imply the conditions $(b_1)$  and $(a)$ (with $\alpha_k=k$) in Theorem \ref{yu-theorem-10-9-1} respectively.
Then,
by Theorem \ref{yu-theorem-10-9-1}, we have $(iii)$ of Theorem \ref{yu-theorem-3-4-1}.
Finally, by Theorem \ref{yu-theorem-3-4-1}, we find that the equation \eqref{p02}
(where $E$ is replaced by $\hat E$)
is completely stabilizable.
\end{proof}

\vskip 5pt

\noindent\emph{\textbf{Example 3. (Point-wise control of 1-D heat equation.)}}\ Consider the
  point-wise controlled
 1-D heat equation:
\begin{equation}\label{h01}
\begin{cases}
y_t(t,x)-y_{xx}(t,x)-cy(t,x)=\delta(x-x_0)u(t), &\mbox{in}\;\;\mathbb{R}^+\times (0, 1),\\
y(t,0)=y(t, 1)=0, &\mbox{in}\;\; \mathbb{R}^+,\\
y(0, x)=y_0(x)\in L^2(0,1), &\mbox{in}\;\; (0, 1).
\end{cases}
\end{equation}
Here, $c> \pi^2$,
 $\delta(\cdot-x_0)$ is the usual Dirac function, while $x_0\in (0,1)$ is given in the following manner:

 First of all, given $x\in \mathbb{R}$, we let  $\|x\|:=\inf_{n\in \mathbb{Z}}|x-n|$
 and write $[x]$ for the integer so that $x-1< [x]\leq x$.
 We next
  construct a continued fraction $[a_1, a_2, \cdots]$ by setting
$$
a_1=2 , q_0=0, q_1=1,
$$
 and defining  successively $q_n, a_n, n\geq 2$ as
\begin{equation}\label{Liu-11-19-01}
q_{n+1}=a_nq_n+q_{n-1}, \;\;a_{n+1}=[e^{q_{n+1}^3}]+1, \;\; n\geq 1.
 \end{equation}
 (About the definition of  continued fractions, we refer readers to \cite[Chapter 1]{Cassels}.)
According to   \cite[Chapter 1, Theorem II and Theorem III]{Cassels}, there is an one-to-one correspondence between continued fractions and real numbers in $(0,1)$. We now let  $x_0$ be the real number corresponding to the above
 $[a_1, a_2, \cdots]$ (for the construction of $x_0$, one can refer to the proof of \cite[ Chapter 1, Theorem III]{Cassels}). It is clear that  $x_0$ is an  irrational number
 since the sequence $\{a_n\}_{n\in\mathbb{N}}$ is infinite.

For the equation \eqref{h01} when $x_0$ is chosen as above, we have the following conclusions:
\begin{itemize}
\item The equation \eqref{h01} can be put into the framework in Section \ref{yu-sec-10-18-1}.
(See the proof of Theorem \ref{theorem4.5,11.26} below.)

\item The equation \eqref{h01} with the null control is unstable, since $c>\pi^2$.

\item The equation \eqref{h01} is not null controllable in any time interval. To see this, we first notice that
 the eigenvalues of operator $-(\partial_x^2+c)$ with domain
 $H_0^1(0,1)\cap H^2(0,1)$ are $\lambda_n:=(n\pi)^2-c$ ($n\in\mathbb{N}$); the corresponding normalized eigenfunctions are $\phi_n(x):=\sqrt{2}\sin(n\pi x)$, $x\in(0,1)$ $(n\in\mathbb{N})$.
We next have from \cite{Dolecki} and \cite[(15) in Chapter 1]{Cassels} that
\begin{equation}\label{Liu-11-19-02}
|\phi_n(x_0)|=|\sqrt{2}\sin(n\pi x_0)|\leq \sqrt{2}\pi\|nx_0\|; \;\;\|q_nx_0\|<\frac{1}{q_{n+1}}\;\;\mbox{for all}\;\; n\in \mathbb{N}.
 \end{equation}
We then arbitrarily fix $T>0$ and consider the series $\sum_{n=1}^\infty\frac{\exp(-\lambda_{q_n} T)}{|\phi_{q_n}(x_0)|}$. By \eqref{Liu-11-19-01}, we see that
$\lambda_{q_n}=\pi q_n^2$ and $q_n\rightarrow +\infty$, as $n\rightarrow+\infty$.
These, along with the definition of $a_n$, yield that there is $N:=N(T)>0$ so that when $n>N$, $\exp(-\lambda_{q_n} T)>1/a_n$, which,  together with \eqref{Liu-11-19-02} and the first equation in \eqref{Liu-11-19-01}, implies
\begin{equation}\label{Liu-11-19-03}
\frac{\exp(-\lambda_{q_n} T)}{|\phi_{q_n}(x_0)|}\geq \frac{1}{a_{n}}\cdot \frac{q_{n+1}}{\sqrt{2}\pi} \geq \frac{q_{n}}{\sqrt{2}\pi}
\;\;\mbox{for all}\;\;  n>N.
 \end{equation}
Since $q_n\rightarrow +\infty$, as $n\rightarrow+\infty$, it follows from \eqref{Liu-11-19-03}
that the series $\sum_{n=1}^\infty\frac{exp(-\lambda_{q_n} T)}{|\phi_{q_n}(x_0)|}$ is divergent. Thus, we can
use \cite[Theorem 1]{Dolecki} to see that the equation (\ref{h01}) is not null controllable over $[0,\widehat{T}]$ for any $\widehat{T}<T$.
Since $T>0$ was arbitrarily taken,  the equation (\ref{h01}) is not null controllable in any time interval.

\item The equation \eqref{h01} is completely stabilizable. (See Theorem \ref{theorem4.5,11.26} below.)
\end{itemize}

 The next Theorem \ref{theorem4.5,11.26} may have independent interest.

\begin{theorem}\label{theorem4.5,11.26}
Let $\hat x_0\in (0,1)$. Then the equation \eqref{h01} (where $x_0$ is replaced by  $\hat x_0$) is completely stabilizable if and only if  $\hat x_0$ is irrational.
\end{theorem}
\begin{proof}
We organize the proof in several steps.

\vskip 5pt

\noindent\emph{Step 1.  We put the equation \eqref{h01} (where $x_0$ is replaced by  $\hat x_0$) into the framework in Section \ref{yu-sec-10-18-1}. }

    Let
 $X:=L^2(0,1)$ and $U:=\mathbb{R}$. Let
$$
    Ay:=(\partial_x^2+c)y, \;\; y\in D(A)=H_0^1(0, 1)\cap H^2(0, 1); \  \ Bu=\delta(\cdot-\hat x_0)u, \;\; u\in U.
$$

  First, we will prove that the above $[A,B]$ satisfies the assumptions $(a)$ and $(b)$ in Lemma \ref{Liu-11-18-remark}. For this purpose, we arbitrarily fix $\rho_0>c$.
 The assumption $(a)$ can be checked easily. Indeed, one can directly check that $\rho_0\in \rho(A)\cap \mathbb{R}$ and the operator $A$, with its domain $D(A)$,  generates an analytic semigroup $S(t)$ ($t\geq 0$) on $X$. From these, it follows that  $(a)$ in Lemma \ref{Liu-11-18-remark} is true.  To show $(b)$, we notice the following facts:

 \begin{itemize}
  \item Fact One.  $H_0^{2\gamma}(0, 1)\subset C[0, 1]$ continuously for each $\gamma>1/4$.
  (See  \cite[Chapter1, Theorem 9.8]{Lions-Magenes}.)

  \item Fact Two. $D((\rho_0I-A)^{\gamma})=H_0^{2\gamma}(0, 1)$ for each $1/4<\gamma<3/4$, where
  the norm of $D((\rho_0I-A)^{\gamma})$ is as: $\|z\|_{D((\rho_0I-A)^{\gamma})}=\|(\rho_0I-A)^{\gamma}z\|_X$, $z\in D((\rho_0I-A)^{\gamma})$.
  (See \cite[Chapter 1, Definition 2.1 and Theorem 11.6]{Lions-Magenes}.)
  \item Fact Three. $B\in \mathcal{L}(U;[C[0,1]]')$.
  (This can be directly checked.)
  \item Fact Four. $[D((\rho_0I-A)^{\gamma})]'=X_{-\gamma}$ for each
 $\gamma>0$, where $[D((\rho_0I-A)^{\gamma})]'$ is the dual space with the pivot space $X(=L^2(0,1))$ and $X_{-\gamma}$ is defined in $(b)$ of Lemma \ref{Liu-11-18-remark}. (This follows from \cite[Chapter 2, Section 2.9]{Tucsnak-Weiss} or \cite[Chapter 1, Theorem 6.2 and Theorem 12.2]{Lions-Magenes}.)
 \end{itemize}
 From these facts, we obtain that
 $B\in \mathcal{L}(U;X_{-\gamma})$ for each
 $1/4<\gamma<3/4$, which leads to $(b)$  in Lemma \ref{Liu-11-18-remark}.
  In summary, the assumptions $(a)$ and $(b)$  in Lemma \ref{Liu-11-18-remark}
  hold for the above $[A,B]$.

Next, we can use Lemma \ref{Liu-11-18-remark} to see that  the assumptions $(\widetilde{H_1})$-$(\widetilde{H_3})$ in Section \ref{yu-sec-10-18-1} are satisfied by $[A,B]$ in the current case. Consequently, the equation \eqref{h01}
has been  put into the framework in Section \ref{yu-sec-10-18-1} by the above way.
\par

\vskip 5pt
\noindent\emph{Step 2.   We prove the sufficiency.}
\vskip 5pt

Assume that  $\hat x_0$  is irrational. Then we have
\begin{equation}\label{yu-10-19-12}
    \phi_n(\hat x_0)\neq 0\;\; \mbox{for all}\;\;n\in\mathbb{N}.
\end{equation}
Here $\phi_n$ is the normalized eigenfunctions defined above (\ref{Liu-11-19-02}). We will verify that all the assumptions in Theorem \ref{yu-corollary-10-10-1}
are true for the current case. {\it When this is done, we can apply
 Theorem \ref{yu-corollary-10-10-1} to get $(iii)$ in Theorem \ref{yu-theorem-10-5-1},
 and then
 use Theorem \ref{yu-theorem-10-5-1} to see that the  equation (\ref{h01}) is completely stabilizable.}

To this end, we first recall that the assumptions $(a)$ and $(b)$ in Lemma \ref{Liu-11-18-remark} have been
checked in Step 1.
We then define, for each $k\in\mathbb{N}$,
\begin{equation*}\label{yu-10-19-13}
    P_k \varphi:=\sum_{i=1}^k\langle \varphi, \phi_i\rangle_{X}\phi_i,\;\; \varphi\in X.
\end{equation*}
  It is clear that, for each $k\in \mathbb{N}$, $P_k$ is the orthogonal projections of $X$ onto the linear span $\mathcal{S}_k:=\mathrm{span}\{\phi_i: i=1, 2, \ldots, k\}$, and
    $P_kS(\cdot)=S(\cdot)P_k$.
From these, we see that the assumption \eqref{yu-10-10-41} in Theorem \ref{yu-corollary-10-10-1} holds.
Next, since  $S(t)^*\phi_n=e^{-\lambda_nt}\phi_n$ for all $n\in\mathbb{N}$ and  $t\in\mathbb{R}^+$, one can  directly check that for each $k\in \mathbb{N}^+$,
\begin{equation*}
    \|(I-P_k)S(t)^*\phi\|_X\leq e^{-\lambda_kt}\|\phi\|_X\;\;
    \mbox{for all}\;\;\phi\in X,\;\;t\in\mathbb{R}^+.
\end{equation*}
    This, together with the definition of $\lambda_k$, implies that the dissipative condition $(a)$ in Theorem \ref{yu-theorem-10-9-1}.
     We finally  show the assumption $(b'_2)$ in Theorem \ref{yu-corollary-10-10-1}. For this purpose, we arbitrarily fix $T_0>0$.
    Define, for each $k\in\mathbb{N}$, the function $p_k(t):=e^{-\lambda_kt}$, $t\in (0,T_0)$. Let  $E(n, T_0)$,
    with $n\in\mathbb{N}$, be the subspace (in $L^2(0,T_0)$),  spanned by the functions $\{p_k\}_{k\in\mathbb{N}\setminus\{n\}}$. Let $d_n$ be the distance of $p_n$ to $E(n, T_0)$ in $L^2(0,T_0)$. Then there are $K>0$ and $\varepsilon>0$ which are independent of $n$ such that (see \cite[Theorem 1.1]{Fattorini})
 \begin{eqnarray*}\label{h04}
 d_n\geq K\exp (-\varepsilon \lambda_n)\;\;\mbox{for all}\;\; n\in\mathbb{N}.
 \end{eqnarray*}
    Thus, it follows by (\ref{yu-10-19-12}) that when $k\in\mathbb{N}$ and  $\varphi=\sum_{n=1}^{+\infty}a_n\phi_n\in D(A^*)$,
 \begin{eqnarray*}\label{yu-10-19-16}
    \int_0^{T_0}\|B^*P_kS(s)^*\varphi\|_U^2ds&=&\int_0^{T_0}\Big\|e^{-\lambda_js}
    a_j\phi_j(\hat x_0)
    +\sum_{1\leq n\leq k,n\neq j}e^{-\lambda_ns}a_n\phi_n(\hat x_0)\Big\|_U^2ds\nonumber\\
    &=&|a_j|^2|\phi_j(\hat x_0)|^2\int_0^{T_0}\Big\|e^{-\lambda_js}-
    \sum_{1\leq n\leq k,n\neq j}\frac{-a_n\phi_n(\hat x_0)}{a_j\phi_j(\hat x_0)}
    e^{-\lambda_ns}\Big\|^2_Uds\nonumber\\
    &\geq& |a_j|^2|\phi_j(\hat x_0)|^2d_j^2\;\;\mbox{for all}\;\;   j\in\{1,2,\ldots, k\}.
 \end{eqnarray*}
   This, together with (\ref{yu-10-19-12}), gives
 \begin{equation*}\label{yu-10-19-17}
   |a_j|^2\leq \frac{1}{d_j^2|\phi_j(\hat x_0)|^2}\int_0^{T_0}\|B^*P_kS^*(s)\varphi\|_U^2ds \;\;\mbox{for all}\;\; j\in\{1,2,\ldots, k\}.
 \end{equation*}
    From the above, we see  that when $k\in\mathbb{N}$,
 \begin{equation*}\label{yu-10-19-18}
    \|P_kS(T_0)^*\varphi\|^2_X=\sum_{j=1}^{k}|a_j|^2e^{-2\lambda_j T_0}
    \leq \sum_{j=1}^{k}\left(\frac{e^{-2\lambda_j T_0}}{d_j^2|\phi_j(\hat x_0)|^2}\right)\int_0^{T_0}\|B^*P_kS(s)^*\varphi\|_U^2ds
    \;\;\mbox{for all}\;\; \varphi\in D(A^*).
 \end{equation*}
   This leads to  the condition $(b'_2)$ (in Theorem \ref{yu-corollary-10-10-1})
    with  $C(k,T_0):=\sum_{j=1}^{k}\left(\frac{e^{-2\lambda_j T_0}}{d_j^2|\phi_j(\hat x_0)|^2}\right)$.

    Hence, all assumptions in Theorem \ref{yu-corollary-10-10-1} are satisfied for the current case.

    \vskip 5pt

\noindent \emph{Step 2.   We prove the necessity.}
\vskip 5pt
By contradiction,  we suppose that (\ref{h01}) (where $x_0$ is replaced by $\hat x_0$) is completely stabilizable,  but
     $\hat x_0$ is a rational number.
          Then there is  $n_0\in \mathbb{N}$ such that
           $\phi_{n_0}(\hat x_0)=0$.
          Write $\varphi(x):=\phi_{n_0}(x)$, $x\in (0,1)$. Arbitrarily fix $T>0$. Then we have
          that $\|\varphi\|_X=1$;
 \begin{eqnarray}\label{h07}
 &&\|S(T)^*\varphi\|_X=\|e^{-\lambda_{n_0}T}\phi_{n_0}\|_X=e^{-\lambda_{n_0}T};\nonumber\\
   &&\| B^*S(T-t)^*\varphi\|_{L^2(0,T; U)}=\left(\int_0^T|e^{-\lambda_{n_0}t}\phi_{n_0}(\hat x_0)|^2dt\right)^{1/2}=0.
 \end{eqnarray}
 Since the equation (\ref{h01}) (where $x_0$ is replaced by $\hat x_0$) is completely stabilizable,  we
obtain from $(iii)$ of Theorem \ref{yu-theorem-10-5-1} (see   (\ref{yu-6-22-1})) and  (\ref{h07}) that for each $\alpha>0$, there exists $C(\alpha)>0$, which is independent on $T$, such that
 \begin{eqnarray*}\label{h08}
 e^{-\lambda_{n_0}T}\leq C(\alpha)e^{-\alpha T} \;\;\mbox{for all}\;\; T>0,
 \end{eqnarray*}
 which is equivalent to
  \begin{eqnarray}\label{h09}
 C(\alpha)\geq e^{(\alpha-\lambda_{n_0})T}\;\;\mbox{for all}\;\; T>0.
 \end{eqnarray}
 However, if we take $\alpha=\lambda_{n_0}+1$, then there is no $C(\alpha)>0$  so that
 (\ref{h09}) is true,  because the right hand side tends to $+\infty$ as $T$ tends to infinity. This leads to a contradiction.
 Thus,  $\hat x_0$ must be irrational.
    \end{proof}

\vskip 5pt
\noindent\emph{\textbf{Example 4. (A periodic controlled system.)}}\ Let $X=U:=l^2$ and $\mathbb{T}:=1$.
Define a sequence $\{\tau_n\}_{n\in\mathbb{N}}$ in the manner: $\tau_n :=\frac{1}{\alpha}\sum\limits_{k=n+1}^\infty a_k$, where $a_k:=e^{-k^2}$ and
$\alpha:=\sum\limits_{k=1}^\infty a_k$. (It is clear that $\alpha\in (0,1)$ and $\tau_n\in (0,1)$ for each $n\in \mathbb{N}$). Let
 \begin{multline*}
  A:=-\mathrm{diag}\{ 1,\,2,\,\cdots,\, n,\,\cdots\};\;\; B(t):=\mathrm{diag}\left\{ \chi_{(\tau_1,\tau_0)}(\{t\}), \;\cdots,  \;\chi_{(\tau_n,\tau_{n-1})}(\{t\}),\;\cdots\right\},\;t\in(0,+\infty),
  \end{multline*}
  where $\{t\}$ denotes  the  fractional  part of $t$, i.e., $\{t\}=t-[t]$, where $[t]$ is the integer so that $t-1<[t]\leq t$.
 Consider the following  $1$-periodic  system:
 \begin{equation}\label{pe1}
\frac{d}{dt}y(t):= \frac{d}{dt}\begin{pmatrix} y_1\\y_2\\ \vdots\\ y_n\\ \vdots\end{pmatrix}(t)=
 A\begin{pmatrix} y_1\\y_2\\ \vdots\\ y_n\\ \vdots\end{pmatrix}(t)
 +
 B(t)\begin{pmatrix} u_1\\u_2\\ \vdots\\ u_n\\ \vdots\end{pmatrix}(t),
 \end{equation}
where $u=(u_1,u_2, \cdots)^\top$ is taken from  $L^2(\mathbb{R}^+; l^2)$.
For the equation \eqref{pe1}, we have the  conclusions:
\begin{itemize}
\item One can directly check that the equation \eqref{pe1} can be put into the framework in Subsection \ref{yu-per-1}.
\item The equation \eqref{pe1} is not null controllable (see Theorem \ref{prop4.6,11.26} given later).
\item The equation \eqref{pe1} is periodically completely stabilizable (see Theorem \ref{prop4.6,11.26} given later).
\end{itemize}

\begin{theorem}\label{prop4.6,11.26}
The system \eqref{pe1} is not null controllable but completely stabilizable.
\end{theorem}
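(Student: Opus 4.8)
The plan is to exploit the fact that \eqref{pe1} is completely decoupled. Since $A$ is diagonal and $A(t)\equiv A$ (there is no perturbation $D(\cdot)$), the $\mathbb{T}$-periodic evolution is simply the semigroup, $\Phi(t,s)=S(t-s)=e^{A(t-s)}$, acting on the $n$th coordinate as multiplication by $e^{-n(t-s)}$; likewise $B(t)^*=B(t)$ acts coordinatewise by $\chi_{(\tau_n,\tau_{n-1})}(\{t\})$. Hence every object in both assertions splits into a sum over the modes $n\in\mathbb{N}^+$, and I can work with the scalar systems $y_n'=-n y_n+\chi_{(\tau_n,\tau_{n-1})}(\{t\})u_n$ one at a time. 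The two halves of the theorem are then proved independently.

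For the failure of null controllability I would argue through the control cost. If \eqref{pe1} were null controllable over some $[0,T]$, then (by the open mapping theorem) there would be $C_T$ so that every $y_0$ can be steered to $0$ at time $T$ by some $u$ with $\|u\|_{L^2(0,T;U)}\le C_T\|y_0\|_X$. Testing with $y_0=e_n$ and using that the other coordinates vanish and can be kept at $0$ with zero control, the minimal cost reduces to the mode-$n$ cost, which equals $e^{-2nT}/G_n(T)$ with Gramian $G_n(T)=\int_0^T e^{-2n(T-s)}\chi_{(\tau_n,\tau_{n-1})}(\{s\})\,ds$. Since the active interval $(\tau_n,\tau_{n-1})$ has length $\tau_{n-1}-\tau_n=a_n/\alpha=e^{-n^2}/\alpha$ and the integrand is at most $1$, I obtain $G_n(T)\le \lceil T\rceil\,e^{-n^2}/\alpha$, so the cost is at least $(\alpha/\lceil T\rceil)\,e^{\,n^2-2nT}$. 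For fixed $T$ this tends to $+\infty$ as $n\to\infty$, contradicting the uniform bound $C_T$; thus \eqref{pe1} is not null controllable over any $[0,T]$.

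For complete stabilizability I would verify condition $(ii)$ of Theorem \ref{maintheorem1}, showing in fact that $n_k=1$ works for every $k$. With $N=n_k\mathbb{T}=1$ one has $\|\Phi(1,0)^*\psi\|_X^2=\sum_n e^{-2n}|\psi_n|^2$ and $\|B(\cdot)^*\Phi(1,\cdot)^*\psi\|_{L^2(0,1;U)}^2=\sum_n w_n|\psi_n|^2$, where $w_n:=e^{-2n}\int_{\tau_n}^{\tau_{n-1}}e^{2ns}\,ds>0$. I split the modes at $n=k$: for $n\ge k$ one has $e^{-2n}\le e^{-2k}$, so the high-mode part of $\|\Phi(1,0)^*\psi\|_X$ is at most $e^{-k}\|\psi\|_X$, which is exactly the $e^{-kn_k\mathbb{T}}\|\psi\|_X$ term; for the finitely many low modes $1\le n\le k-1$ I write $e^{-2n}=(e^{-2n}/w_n)w_n$ and take $C(k):=\max_{1\le n\le k-1}\big(\int_{\tau_n}^{\tau_{n-1}}e^{2ns}\,ds\big)^{-1/2}$, a finite positive constant, to dominate that part by $C(k)\|B(\cdot)^*\Phi(1,\cdot)^*\psi\|_{L^2(0,1;U)}$. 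Combining via $\sqrt{x+y}\le\sqrt{x}+\sqrt{y}$ produces the inequality in Theorem \ref{maintheorem1}$(ii)$, and that theorem yields periodic complete stabilizability.

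The two mode-wise computations are routine; the one point that needs care is the cost estimate in the non-controllability argument, where the Gramian must be bounded above by the measure of the active control set so that the super-exponential smallness $e^{-n^2}$ of the interval length overwhelms the exponential $e^{-2nT}$ from the dynamics. This super-exponential decay of $a_n=e^{-n^2}$ (together with $\tau_n\to0$) is exactly what forces the control cost to blow up like $e^{\,n^2-2nT}$, while each individual mode remains controllable, so that the very same family of inequalities nonetheless certifies complete stabilizability through Theorem \ref{maintheorem1}.
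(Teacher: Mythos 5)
Your proposal is correct and follows essentially the same route as the paper: both halves test against the single modes $e_n$, the non-controllability rests on the active window $(\tau_n,\tau_{n-1})$ having super-exponentially small length $e^{-n^2}/\alpha$ so that the mode-$n$ observability constant (equivalently, your control cost $e^{-2nT}/G_n(T)$ -- the dual formulation of the paper's direct disproof of the observability inequality) blows up, and the stabilizability is obtained exactly as in the paper by verifying Theorem \ref{maintheorem1}$(ii)$ with $n_k=1$ and a mode split at $n=k$. The only cosmetic differences are that you phrase non-controllability via the uniform cost bound rather than the observability inequality, and your constant $C(k)$ is left as a finite max where the paper computes the explicit value $\sqrt{\alpha}\,e^{k^2/2}$.
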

\begin{proof}
We organize the proof in  two steps.

\vskip 5pt

\noindent{\it Step 1. We show that \eqref{pe1} is not null controllable.}

\vskip 5pt

We only need  to prove the following \emph{Statement A}:  For each $m\in\mathbb{N}$, the system \eqref{pe1} is not null controllable over $[0,m]$. To this end, we arbitrarily fix $m\in\mathbb{N}$.
Write $\varphi_m(\cdot;\psi)$ for the solution to the dual system:
 \begin{equation*}\label{pe2}
 \begin{cases}
 \displaystyle\frac{d}{dt}\varphi_m(t)\triangleq\frac{d}{dt}\begin{pmatrix} \varphi_{m,1}\\\varphi_{m,2}\\ \vdots\\ \varphi_{m,n}\\ \vdots\end{pmatrix}(t)=
 \begin{pmatrix} 1\\&2\\ &&\ddots\\&&& n\\ &&&&\ddots\end{pmatrix}\begin{pmatrix} \varphi_{m,1}\\\varphi_{m,2}\\ \vdots\\ \varphi_{m,n}\\ \vdots\end{pmatrix}(t),\;\;t\in [0,m],\\
 \varphi_m(m)=\psi=(\psi_1,\psi_2, \cdots)^\top.
 \end{cases}
 \end{equation*}
Because of the equivalence between the controllability and the observability,
\eqref{pe1} is not null controllable  over $[0,m]$
 if and only if for any $C>1$, there is  $\psi\in l^2$ so that
  \begin{equation}\label{pe3}
 \|\varphi_m(0;\psi)\|_{X}>C\|B^*(\cdot)\varphi_m(\cdot;\psi)\|_{L^2(0, m;U)}.
 \end{equation}
To show \eqref{pe3}, we arbitrarily fix $C>1$. Take $n=n(C)\in \mathbb{N}$ such that
\begin{equation}\label{pe4}
n\geq m+\sqrt{m^2+2\ln C+\ln\frac{2}{\alpha}}.
 \end{equation}
 Then we take $\psi=(\psi_1,\psi_2, \cdots)^\top\in l^2$ with $\psi_n=1$ and $\psi_k=0$, when $k\neq n$.
 By a direct calculation, we find that  $ \|\varphi_m(0;\psi)\|=e^{-nm}$ and that
 $$
\|B^*(\cdot)\varphi_m(\cdot;\psi)\|_{L^2(0, m;U)}^2=\int^{\tau_{n-1}}_{\tau_n}e^{-2n\tau}\mathrm d\tau\sum\limits_{k=0}^{m-1}e^{-2nk}\leq \frac{e^{-n^2}}{\alpha(1-e^{-2n})}<\frac{2}{\alpha}e^{-n^2}.
 $$
 These, together with \eqref{pe4}, leads to \eqref{pe3}.
 Therefore, the system \eqref{pe1} is not null controllable.
 \vskip 5pt

\noindent{\it Step 2. We show that \eqref{pe1} is completely stabilizable.}

\vskip 5pt
 It is sufficient to prove $(ii)$ of Theorem  \ref{maintheorem1}. To this end, we arbitrarily fix
 $k\in\mathbb{N}$. Let $n_k=1$ and
 $ C(k)=\sqrt{\alpha}e^{k^2/2}$. Then we have that for each  $\psi=(\psi_1,\psi_2, \cdots)^\top\in l^2$,
 $$
\|\varphi_{1}(0;\psi)\|^2_{X}=\sum\limits_{n=1}^{k}e^{-2n}\psi_n^2+\sum\limits_{n=k    +1}^{\infty}e^{-2n}\psi_n^2\leq
\sum\limits_{n=1}^{k}e^{-2n}\psi_n^2+e^{-2k}\sum\limits_{n=k+1}^{\infty}\psi_n^2
\leq \sum\limits_{n=1}^{k}e^{-2n}\psi_n^2+e^{-2k}\|\psi\|^2_{X};
$$
 $$
\|B^*(\cdot)\varphi_{1}(\cdot;\psi)\|_{L^2(0,1;U)}^2=\sum\limits_{n=1}^{\infty}\int^{\tau_{n-1}}_{\tau_n}e^{-2n\tau}\mathrm d\tau\psi_n^2
 \geq \sum\limits_{n=1}^{k}\frac{a_n}{\alpha} e^{-2n}\psi_n^2.
 $$
 These, along with the fact that $C(k)^2a_n=e^{k^2}a_n\geq 1$ when $1\leq n\leq k$, yield
 \begin{equation*}\label{pe5}
\|\varphi_{1}(0;\psi)\|_{X}\leq
C(k)\|B^*(\cdot)\varphi_{1}(\cdot;\psi)\|_{L^2(0,1;U)}+e^{-k }\|\psi\|_{X}\;\;\mbox{for any}\;\;\psi\in X,
\end{equation*}
 which, along with Remark \ref{remark3.11,12.8},
  leads to the statement $(ii)$ of Theorem \ref{maintheorem1}. Then it follows from Theorem \ref{maintheorem1} that the system \eqref{pe1} is periodically completely stabilizable.
\end{proof}
\vskip 5pt

\textbf{Acknowledgments.} The authors would like to thank the anonymous referees for the valuable suggestions.

\end{document}